\newtheorem{theorem}{Theorem}[section]
\newtheorem{lemma}{Lemma}[section]
\newtheorem{proposition}{Proposition}[section]
\newtheorem{corollary}{Corollary}[section]
\newtheorem{definition}{Definition}[section]
\newtheorem{remark}{Remark}[section]
\newtheorem{example}{Example}[section]
\begin{document}

\title{ON $1$-SEMIREGULAR AND $2$-SEMIREGULAR RINGS}

\author{Driss Bennis}
\address{Departement of Mathematics, Faculty of Sciences, Mohammed V University in Rabat, Rabat, Marocco}
\email{driss.bennis@um5.ac.ma;\quad driss${\_}$bennis@hotmail.com}

\author{Fran\c{c}ois Couchot}
\address{Normandie Univ, UNICAEN, CNRS, LMNO, 14000 Caen, France}
\email{francois.couchot@yahoo.com} 

\keywords{$1$-semiregular; $2$-semiregular rings; finitely presented periodic modules} 

\subjclass[2010]{13C05, 13F99.}

\begin{abstract}
In this paper, we are mainly interested in the two questions ``which are the commutative  rings on which every finitely presented modules is $1$-periodic (resp., $2$-periodic)?". It is proved that these kinds of rings are particular cases of semiregular rings. So we call them $1$-semiregular and $2$-semiregular rings, respectively. We establish characterizations of these  rings in terms of various classical notions and we provides several examples of such rings.
\end{abstract}

\maketitle

\section{Introduction and preliminaries}
All rings considered in this paper are assumed to be commutative with an identity; in particular, $R$ denotes such a ring, and all modules are assumed to be unitary   modules. 
Recall that an $R$-module $M$ is said to be {\bf $n$-periodic}  if it admits a projective resolution of this form \[0\to M\to P_n\to \cdots \to P_1\to M\to 0\] (for a given positive integer $n$). Naturally, if $m$ is a multiple of $n$, an $n$-periodic module is $m$-periodic. It will be simply called {\bf periodic} if it is $1$-periodic. Notice that several authors use the term periodic to  design an $n$-periodic module for some positive integer $n$. According to  the terminology of  \cite{BIE}, $M$ is called {\bf $n$-F-periodic} (or simply {\bf F-periodic} for $n=1$)   if it admits a flat  resolution of this form \[0\to M\to F_n\to \cdots \to F_1\to M\to 0.\]

\medskip

 The existence and the structure of  $n$-periodic modules have been extensively investigated by many authors and in various areas of research such as Geometry and cohomology group theory and representation theory, where the associated projective resolution is usually supposed to be minimal  (see for instance the surveys \cite{ES, S06}  and the references within). In commutative ring theory, other interesting problems also occur such as relation between Betti numbers and $n$-periodic modules (see  for instance the overview \cite{PS} on other  open problems; see also \cite{BIE} for another new trend on this subject). In the classical representation theory of algebras, the following two questions have been intensively investigated (see \cite{ES, S06}):\\
\indent \textbf{Problem 1.} Given   an algebraically closed field $K$, determine the finite dimensional $K$-algebras $A$ for which every finitely generated module is $n$-periodic for some positive integer $n$.\\
\indent \textbf{Problem 2.} Assume $A$ is an algebra such that every finitely generated module is  $n$-periodic for some positive integer $n$. Does there exist a positive integer $m$ such that every finitely generated module is $m$-periodic? 

However, to our knowledge, there is no researcher in commutative algebra who has explicitly stated the following natural questions for a given integer $n\geq 1$:\medskip

\indent  \textbf{Question 1.} Which are the commutative  rings over which every module is $n$-periodic?\\
\indent   \textbf{Question 2.} Which are the commutative  rings over  which every finitely generated  module is $n$-periodic?\\
\indent  \textbf{Question 3.} Which are the commutative  rings over which every finitely presented   module is $n$-periodic?\\

We know  from the classical result \cite[Corollary 5.9]{FW} that a ring $R$ is quasi-Frobenius if  every finitely generated $R$-module embeds in a free $R$-module.  So we propose the following terminology:  

\begin{definition}\label{def-n-QF-ring}  Let $n$  be an integer $\geq 1$.   A ring $R$ is called {\bf $n$-quasi-Frobenius}  ({\bf weakly $n$-quasi-Frobenius})  if every $R$-module (resp., finitely generated  $R$-module)  is $n$-periodic. 
\end{definition}  

Then, $n$-quasi-Frobenius  rings and weakly $n$-quasi-Frobenius rings are both particular cases of quasi-Frobenius rings.  From \cite{BMO}, we can see that the $1$-quasi-Frobenius rings and the weakly $1$-quasi-Frobenius rings are the same and they are finite direct product of rings with at most one non-zero ideal. From \cite{BeHuWa15}, we deduce that the $2$-quasi-Frobenius rings and the weakly $2$-quasi-Frobenius rings are also the same and they are finite direct product of local Artinian valuation rings (equivalently, local principal ideal Artinian rings, \cite[Theorem 2.1]{EG}).  However,  we ignore how  $n$-quasi-Frobenius  rings and weakly $n$-quasi-Frobenius rings  are for the other cases.   In this paper, we turn our attention to question 3. Since these kinds of rings are particular cases of {\bf semiregular rings} (i.e., over which every module is embeded into a flat module  \cite{Mat85}), we introduce the following terminology.
 
\begin{definition}\label{def-12ring}  Let $n$  be an integer $\geq 1$.   A ring $R$ is called {\bf $n$-semiregular}   if every finitely presented $R$-module is  $n$-periodic. 
\end{definition}  
  
Indeed, the fact that an $n$-semiregular  ring is  semiregular can be deduced easily from   the following result.
  
\begin{proposition}\label{P:semireg}
Let $R$ be a ring. Then the following conditions are equivalent:
\begin{enumerate}
\item $R$ is semiregular;
\item each finitely presented $R$-module is contained in a free module;
\item each finitely presented cyclic $R$-module is contained in a free module;
\item $R$ is coherent and $(0:(0:I))=I$ for each finitely generated ideal $I$.
\end{enumerate} 
\end{proposition}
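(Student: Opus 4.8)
The plan is to prove the cycle $(1)\Rightarrow(2)\Rightarrow(3)\Rightarrow(4)\Rightarrow(1)$. For $(1)\Rightarrow(2)$, let $M$ be finitely presented and $M\hookrightarrow F$ with $F$ flat; by Govorov--Lazard, $F=\varinjlim F_{\lambda}$ is a filtered colimit of finitely generated free modules, and since $M$ is finitely presented the inclusion factors as $M\xrightarrow{\iota_{\lambda}}F_{\lambda}\to F$ for some $\lambda$. The composite being injective forces $\iota_{\lambda}$ injective, so $M$ embeds in the free module $F_{\lambda}$. The implication $(2)\Rightarrow(3)$ is trivial.

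For $(3)\Rightarrow(4)$, the key elementary remark is that, for a finitely generated ideal $I$, the cyclic module $R/I$ embeds in $R^{n}$ exactly when $I=\bigcap_{k=1}^{n}(0:a_{k})$ for suitable $a_{1},\dots,a_{n}\in R$: if $\bar 1\mapsto(a_{1},\dots,a_{n})$, injectivity of the induced map is the equality $I=\mathrm{Ann}(a_{1},\dots,a_{n})=\bigcap_{k}(0:a_{k})$, and conversely such $a_{k}$ give $R/I\hookrightarrow\prod_{k}a_{k}R\subseteq R^{n}$. Granting (3), write any finitely generated ideal $I$ as $(0:A)$ with $A=(a_{1},\dots,a_{n})$; then $A\subseteq(0:I)$, hence $(0:(0:I))\subseteq(0:A)=I$, and the reverse inclusion is automatic, so $(0:(0:I))=I$. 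Applying this with $A$ in place of $I$ gives $(0:I)=(0:(0:A))=A$, so $(0:I)$ is finitely generated for every finitely generated $I$ (in particular each $(0:a)$ is). Finally, for finitely generated $I,J$ one has $I\cap J=(0:(0:I))\cap(0:(0:J))=\bigl(0:((0:I)+(0:J))\bigr)$, finitely generated because $(0:I)+(0:J)$ is; by Chase's criterion for coherence ($(0:a)$ finitely generated for all $a$ and intersections of two finitely generated ideals finitely generated), $R$ is coherent, and (4) holds.

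For $(4)\Rightarrow(1)$, it is standard (using that an injective submodule of any module is a direct summand) that $R$ is semiregular iff every injective $R$-module $E$ is flat, i.e. $\mathrm{Tor}_{1}^{R}(R/I,E)=0$ for every finitely generated ideal $I$. Dualizing via the character module, $\mathrm{Tor}_{1}^{R}(R/I,E)^{+}\cong\mathrm{Ext}^{1}_{R}(R/I,E^{+})$; since $E$ injective is in particular FP-injective, coherence of $R$ makes $E^{+}$ flat, and since $R/I$ has a resolution by finitely generated free modules (coherence again), $\mathrm{Ext}^{1}_{R}(R/I,-)$ commutes with the filtered colimit expressing the flat module $E^{+}$ as a colimit of finitely generated free modules. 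This reduces flatness of all injectives to the single statement that $R$ is self-FP-injective, i.e. $\mathrm{Ext}^{1}_{R}(R/I,R)=0$ for every finitely generated ideal $I$.

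That reduction is where the real work lies, and I expect it to be the main obstacle. The hypothesis $(0:(0:I))=I$ says directly only that each cyclic finitely presented module $R/I$ is torsionless (the kernel of $R/I\to(R/I)^{**}$ is $(0:(0:I))/I$), and this has to be propagated to $\mathrm{Ext}^{1}_{R}(F,R)=0$ — equivalently, torsionlessness — for every finitely presented $F$. I would induct on the minimal number of generators $\mu(F)$: from $0\to K\to R^{n}\to F\to 0$ with $K$ finitely generated (coherence), splitting off one coordinate yields $0\to F'\to F\to F''\to 0$ with $F'$ finitely presented and $\mu(F')\le n-1$, and $F''\cong R/J$ cyclic finitely presented, coherence being exactly what keeps $K\cap R^{n-1}$ and the image of $K$ in the last coordinate finitely generated; the long exact $\mathrm{Hom}(-,R)$-sequence then gives $\mathrm{Ext}^{1}_{R}(F,R)=0$ from the inductive hypothesis and the cyclic case. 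The cyclic case $\mathrm{Ext}^{1}_{R}(R/J,R)=0$ is itself the delicate point: for principal $J=(a)$ one computes $\mathrm{Ext}^{1}_{R}(R/(a),R)\cong(0:(0:a))/(a)=0$ by hypothesis, and the general finitely generated $J$ is reached by a further induction on the number of generators of $J$, again using coherence to keep the relevant ideal quotients finitely generated.
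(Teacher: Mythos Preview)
The paper's proof is almost entirely by citation: $(1)\Leftrightarrow(4)$ is \cite[Proposition 4.1]{Mat85}, $(1)\Leftrightarrow(2)$ is \cite[Theorem 1]{Col75}, and only $(3)\Rightarrow(4)$ is argued directly---and your argument for that implication is essentially identical to the paper's. So you are attempting something the paper does not: a self-contained proof of Matlis's and Colby's results. Your $(1)\Rightarrow(2)$ via Lazard is clean and correct, and your reduction of $(4)\Rightarrow(1)$ to the vanishing of $\mathrm{Ext}^1_R(R/J,R)$ for finitely generated $J$ (injectives flat $\Leftrightarrow$ character modules of injectives have $\mathrm{Ext}^1(R/I,-)=0$, then use coherence and Lazard to reduce to $\mathrm{Ext}^1(R/I,R)$) is also correct.

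There are, however, two problems in the last paragraph. First, the parenthetical ``$\mathrm{Ext}^1_R(F,R)=0$ --- equivalently, torsionlessness'' is simply false: over $k[x,y]$ the ideal $(x,y)$ is torsionless but $\mathrm{Ext}^1((x,y),R)\cong\mathrm{Ext}^2(k,R)\ne 0$. This is only a side remark, but it should be deleted. Second, and more substantively, your induction on the number of generators of $J$ is left as a gesture (``using coherence to keep the relevant ideal quotients finitely generated''), and coherence alone is not what makes it work. Concretely: write $J=J'+Ra$ with $J'$ generated by fewer elements; after subtracting the extension of $f|_{J'}$ you are left with $g:J\to R$ vanishing on $J'$, and $g(a)=c\in\bigl(0:(J':a)\bigr)$. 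Since $(0:a)\subseteq(J':a)$ and $(0:(0:a))=Ra$, you get $c=ta$ for some $t$; but you need $t\in(0:J')$, whereas you only know $t\in\bigl(0:(J'\cap Ra)\bigr)$. The missing step is the identity
\[
\bigl(0:(J'\cap Ra)\bigr)=(0:J')+(0:a),
\]
which follows from the double-annihilator hypothesis applied to the finitely generated ideal $(0:J')+(0:a)$: indeed $(0:((0:J')+(0:a)))=(0:(0:J'))\cap(0:(0:a))=J'\cap Ra$, and then one more application of $(0:(0:-))=\mathrm{id}$ gives the claim. With this in hand, write $t=t_1+s$ with $t_1\in(0:J')$, $s\in(0:a)$, so $c=t_1a$ and $f$ is multiplication by $r+t_1$. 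So the induction does go through, but the crux is the double-annihilator condition feeding back in, not merely coherence; you should make that explicit.
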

\begin{proof}
$(4)\Leftrightarrow (1)$ by \cite[Proposition 4.1]{Mat85}.

$(1)\Leftrightarrow (2)$ by \cite[Theorem 1]{Col75}.

$(2)\Rightarrow (3)$. Evident.

$(3)\Rightarrow (4)$. Let $I$ be a finitely generated ideal of $R$. The inclusion $I\subseteq (0:(0:I))$ always holds. There exist a free $R$-module $F$ of finite rank and a monomorphism $\varphi:R/I\rightarrow F$. Let $\{e_1,\dots,e_n\}$ be a basis of $F$. Then $\varphi(1+I)=a_1e_1+\dots+a_ne_n$. Let $A=Ra_1+\dots+Ra_n$. Clearly $I=(0:A)$. It sucessively follows that $A\subseteq (0:I)$, $(0:(0:I))\subseteq I$ and $(0:I)=A$. It remains to show that $I\cap J$ is finitely generated for each pair $I,\ J$ of finitely generated ideals. Let $A=(0:I)$ and $B=(0:J)$. Then $I\cap J=(0:A+B)$ which is finitely generated.
\end{proof}

\begin{definition}

An exact sequence of $R$-modules $0 \rightarrow F \rightarrow E \rightarrow G \rightarrow 0$  is {\bf pure}
if it remains exact when tensoring it with any $R$-module. Then, we say that $F$ is a \textbf{pure} submodule of $E$. When $E$ is flat, it is well known that $G$ is flat if and only if $F$ is a pure submodule of $E$. An $R$-module $E$ is {\bf FP-injective} if $\hbox{Ext}_R^1 (M, E) = 0$  for any finitely presented $R$-module $M$.  We recall that a module $E$ is FP-injective if and only if it is a pure submodule of every overmodule. 
\end{definition}
 
The $n$-semiregularity of rings is  preserved  under localizations and  also under quotient rings over pure ideals. 

\begin{proposition} \label{P:locS}  
Let $n$  be an integer $\geq 1$. If $R$ an $n$-semiregular ring., then we have:
\begin{enumerate}
\item  $S^{-1}R$ is $n$-semiregular for each multiplicative subset $S$ of $R$;
\item $R/I$ is $n$-semiregular for each pure ideal $I$ of $R$.
\end{enumerate}
\end{proposition}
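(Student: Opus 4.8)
The plan is to derive both statements from a single observation: in each case the target ring is a \emph{flat} $R$-algebra, and $n$-semiregularity transfers along flat base change. Indeed $S^{-1}R$ is flat over $R$ since localization is exact, and $R/I$ is flat over $R$ precisely because $I$ is pure --- apply the flatness criterion recalled in the definition block to $0\to I\to R\to R/I\to 0$, whose middle term is flat. Write $T$ for either $S^{-1}R$ or $R/I$, with (flat) structure map $R\to T$.

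First I would show that every finitely presented $T$-module $N$ is extended from a finitely presented $R$-module. Choose a presentation $T^{m}\xrightarrow{\bar A} T^{n}\to N\to 0$. In the quotient case, lift every entry of $\bar A$ to $R$; in the localization case, first rescale each column of $\bar A$ by a suitable element of $S$ (an automorphism of $T^{m}$, hence harmless for the cokernel) so that all entries lie in the image of $R$, then lift. Either way one obtains a matrix $A$ over $R$ with $A\otimes_R T$ equal to $\bar A$ up to that column rescaling; putting $M=\operatorname{coker}(R^{m}\xrightarrow{A}R^{n})$, a finitely presented $R$-module, right exactness of $-\otimes_R T$ gives $M\otimes_R T\cong N$.

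Then I would invoke the hypothesis: $M$ is finitely presented and $R$ is $n$-semiregular, so $M$ is $n$-periodic and fits in an exact sequence
\[0\to M\to P_n\to\cdots\to P_1\to M\to 0\]
with each $P_i$ projective over $R$. Applying the flat functor $-\otimes_R T$ preserves exactness, each $P_i\otimes_R T$ is a projective $T$-module (projectivity survives base change, being a retract of a free module), and $M\otimes_R T\cong N$; hence
\[0\to N\to P_n\otimes_R T\to\cdots\to P_1\otimes_R T\to N\to 0\]
is an $n$-periodic projective resolution of $N$ over $T$. As $N$ was an arbitrary finitely presented $T$-module, $T$ is $n$-semiregular, which yields both (1) and (2).

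The only step that needs genuine care is the ``extended module'' reduction, and within it only the localization case: one must check that clearing denominators column by column is realized by an automorphism of the free module $T^{m}$, so that it leaves the isomorphism type of the cokernel unchanged. Everything else is routine --- exactness of flat base change and stability of projectivity under base change.
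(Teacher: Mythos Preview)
Your proof is correct. For part~(1) you and the paper follow essentially the same line: every finitely presented $S^{-1}R$-module is extended from a finitely presented $R$-module, and then localizing the $n$-periodic resolution (a flat, projectivity-preserving base change) finishes. The paper simply asserts the extension step as known, whereas you spell out the denominator-clearing argument explicitly.

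For part~(2) the routes diverge. You argue directly from the flatness of $R/I$ (which holds because $I$ is pure) and re-run the extension-plus-base-change argument with $T=R/I$. The paper instead invokes the standard fact that for a pure ideal $I$ one has $R/I\cong (1+I)^{-1}R$, thereby reducing (2) to (1) in a single line. Your uniform treatment makes the underlying mechanism (flat base change) transparent and is entirely self-contained; the paper's reduction is shorter and avoids repeating any work. Both are valid.
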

\begin{proof}
$(1)$.  For each finitely presented $S^{-1}R$-module $M$ there exists a finitely presented $R$-module $M'$ such that $M\cong S^{-1}M'$. From $M'$ $n$-periodic over $R$, we easily get that $M$ is $n$-periodic over $S^{-1}R$.

$(2)$. It is well known that $R/I\cong S^{-1}R$ where $S=1+I$.
\end{proof}

We focus our intention in this paper on   $1$-semiregular rings  and $2$-semiregualr rings. In Section 2,  we give   a characterization of $1$-semiregular rings (Theorem \ref{T:G-sr}). We prove among other things that a ring $R$ is $1$-semiregular if and only if it is coherent and locally $1$-quasi-Frobenius  (that is, $PR_P$ is either zero or a simple module for each prime ideal $P$ of $R$). These  $1$-semiregular rings are particular cases of one-Krull-dimensional  B\'ezout rings (see condition 4 of Theorem \ref{T:G-sr}) and are exactly   coherent and arithmetical rings  such that the square ideals are  pure ideals. Then, as a consequence we get that  $1$-semiregular rings are elementary divisor rings (see Corollary \ref{C:Cyc-val}). In Section 3 we provide various examples of $1$-semiregular rings through characterizations of $1$-semiregularity of some ring constructions: direct product of rings, trivial extension of rings and amalgamated duplication of a ring along an ideal.  Section 4 is devoted to the study of $2$-semiregular rings.  After giving some preparatory results we give the first main result (Theorem \ref{T:2G-sr}) which characterizes   $2$-semiregular rings.  As corollaries we characterize   semilocal and local    $2$-semiregular rings (see Corollaries \ref{C:semilocal} and  \ref{C:Cyc-val}). We end the paper with   investigations of $2$-semiregularity of some ring constructions. This provides examples of $2$-semiregular rings and also helps in  discussing some conditions imposed before. 

Since modules over semiregular rings  embed into   flat modules, one can also ask  whether   $1$-semiregularity (resp., $2$-semiregularity) can be characterized with F-periodic (resp. $2$-F-periodic) modules.  Throughout this document, and to the extent that it allows, we provide results and examples showing that $1$-semiregular (resp., $2$-semiregular)  rings are closely related to rings on which every module is $1$-F-periodic (resp, $2$-F-periodic).

\section{$1$-semiregular rings}
Our aim in this section is to characterize  $1$-semiregular rings. We will show that there are related to some classical notions. For reader's convenience we recall these notions:

Recall that an $R$-module $E$ is said to be {\bf uniserial} if the set of its submodules is totally ordered by inclusion, and $E$ is called {\bf distributive} if $E_P$ is uniserial over $R_P$ for each maximal ideal $P$ of $R$. We say that $E$ is a {\bf coherent} $R$-module if each of its finitely generated submodules is finitely presented. A ring $R$ is a {\bf valuation ring} if $R$ is a uniserial module, and $R$ is {\bf arithmetical} if $R$ is locally a valuation ring.  We say that $R$ is an {\bf elementary divisor ring} if for every matrix $A$, with entries in $R$, there  exist a diagonal matrix $D$ and invertible matrices $P$ and $Q$, with entries in $R$, such that $PAQ = D$ .  A ring is a {\bf B\'ezout ring} if every finitely generated ideal is principal (see \cite{FI,FII} and also \cite{WiWi75} where the structure of finitely generated modules over B\'ezout  rings was investigated).\medskip

The following proposition is needed to prove the main result of this section.

\begin{proposition}\label{P:distr}
Let $R$ be a von Neumann regular ring. Then each coherent distributive module is isomorphic to an ideal of $R$.
\end{proposition}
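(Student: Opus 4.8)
The plan is to exploit the fact that over a von Neumann regular ring $R$ every module is flat, and that $R$ is coherent, so "coherent module" means every finitely generated submodule is finitely presented, hence a direct summand (finitely presented flat modules over any ring are projective, and over a von Neumann regular ring finitely generated submodules of projectives are summands). Let $E$ be a coherent distributive $R$-module. First I would reduce to showing $E$ is generated by a single element up to the obvious local picture: the key point is that a distributive module has the property that the sum of any two cyclic submodules is again cyclic, because this can be checked locally at each maximal ideal $P$, where $E_P$ is uniserial so $R_Px_P + R_Py_P$ equals one of the two and is therefore cyclic; the global statement $Rx+Ry$ cyclic then follows from a standard localization argument for finitely generated modules (using that $R$ is coherent, or directly that a finitely generated distributive module which is locally cyclic is cyclic). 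Consequently every finitely generated submodule of $E$ is cyclic.

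Next I would argue that $E$ itself is cyclic, or rather embeds as an ideal. Consider the family of cyclic submodules of $E$, directed by inclusion (by the previous paragraph, given $Rx, Ry \subseteq E$ there is a cyclic submodule containing both, namely $Rx+Ry$). Thus $E$ is the directed union of its cyclic submodules $Rx$. Each cyclic submodule $Rx \cong R/\mathrm{ann}(x)$ is finitely presented (since $R$ is coherent, or simply because $R$ is von Neumann regular so $\mathrm{ann}(x)$ is generated by an idempotent), hence $Rx$ is a projective $R$-module, in fact $Rx \cong eR$ for an idempotent $e = e(x)$ with $\mathrm{ann}(x) = (1-e)R$. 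The inclusions in the directed system then correspond to inclusions $e(x)R \subseteq e(y)R$ of summands of $R$, i.e. to an increasing chain of idempotents in the Boolean algebra of idempotents of $R$.

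Now I would assemble the embedding $E \hookrightarrow R$. For each cyclic $Rx$ fix a splitting $R = Rx \oplus C_x$, equivalently an isomorphism $Rx \cong e(x)R \subseteq R$; the compatibility of these along the directed system (each inclusion $e(x)R\hookrightarrow e(y)R$ is multiplication by $e(x)$, and $e(x)$ is the identity on $e(x)R$) shows that the maps $Rx \hookrightarrow R$ can be chosen to form a compatible family, yielding in the colimit an $R$-linear map $E = \varinjlim Rx \to R$. This map is injective because injectivity can be checked on the finitely generated (hence cyclic) submodules, on each of which it is the fixed embedding $Rx \cong e(x)R \hookrightarrow R$. Hence $E$ is isomorphic to a submodule of $R$, i.e. to an ideal, as claimed.

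The main obstacle is the compatibility of the chosen embeddings $Rx \hookrightarrow R$ across the directed system: a priori the splitting $R = Rx \oplus C_x$ is not canonical. I expect to resolve this by working with the idempotents directly — writing $\mathrm{ann}(x) = (1-e(x))R$ with $e(x)$ the unique idempotent generator, so that $Rx = e(x)R$ \emph{as a submodule of $R$} (not merely up to isomorphism), after first identifying $Rx$ with $R/\mathrm{ann}(x) = R/(1-e(x))R \cong e(x)R$. Once the cyclic submodules are realized as the honest ideals $e(x)R$ inside $R$ with the transition maps being the genuine inclusions, there is nothing to check and the colimit is simply the ideal $\bigcup_x e(x)R$. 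The one remaining point is to see that the abstract directed union $\varinjlim Rx$ really is $E$ and that the induced map to $\bigcup e(x)R$ is an isomorphism onto that ideal, which is immediate since both are the union of the same subobjects.
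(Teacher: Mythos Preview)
Your reduction to the directed system of cyclic submodules is correct and useful: over a von Neumann regular ring, a coherent distributive module does have all its finitely generated submodules cyclic and isomorphic to $e(x)R$ for a uniquely determined idempotent $e(x)$. The gap is exactly where you flag it, and your proposed resolution does not close it. You write that after identifying $Rx$ with $R/(1-e(x))R\cong e(x)R$ the transition maps become the \emph{genuine} inclusions $e(x)R\hookrightarrow e(y)R$, so there is ``nothing to check.'' But that identification is the map $rx\mapsto re(x)$; if $Rx\subseteq Ry$ with $x=ay$, then under $\phi_y$ the element $x$ goes to $a\,e(y)$, whereas under $\phi_x$ it goes to $e(x)$. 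One has $R\cdot a e(y)=e(x)R$ as ideals, but $a e(y)\neq e(x)$ unless $a$ happens to be idempotent on $D(e(y))$. So the square
\[
\begin{matrix}
Rx & \hookrightarrow & Ry\\
{\scriptstyle\phi_x}\downarrow & & \downarrow{\scriptstyle\phi_y}\\
e(x)R & \hookrightarrow & e(y)R
\end{matrix}
\]
does not commute for the ``canonical'' choices, and the obstruction to making a coherent choice across the whole directed poset is a genuine cocycle problem that you have not addressed. Your final sentence, that the colimit is ``simply the ideal $\bigcup_x e(x)R$,'' presupposes precisely the compatibility that is in question.

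The paper sidesteps this entirely. Rather than build an embedding $M\hookrightarrow R$ out of local pieces, it first writes down the target ideal $I=\sum_x Re_x$, notes that $M_P\cong R_P\cong I_P$ for $P\in D(I)$ and $M_P=0=I_P$ for $P\in V(I)$, and then embeds \emph{both} $M$ and $I$ into the single module $E=\prod_{P\in D(I)}R_P$. The point is that inside $E$ one can compare the two images $M'$ and $I'$ stalkwise: at every maximal ideal $P$ the three modules $M'_P$, $I'_P$ and $(M'+I')_P$ coincide, whence $M'=I'$ globally and $M\cong I$. This ``embed both sides in a common overobject and compare locally'' manoeuvre replaces your coherence problem by a local computation, and is the idea your argument is missing.
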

\begin{proof}
Let $M$ be a coherent distributive $R$-module. For each $x\in M$, $Rx$ is finitely presented and projective. So, there exists an idempotent $e_x\in R$ such that $Rx\cong Re_x$. Let $I$ be the ideal generated by $\{e_x\mid x\in M\}$. For each $P\in D(I)$ we have $M_P\cong R_P\cong I_P$ and, since $V(I)\subseteq D(1-e_x)$ for each $x\in M$, for each $P\in V(I)$ $M_P=0=I_P$. We consider $E=\prod_{P\in D(I)}R_P$. There exist a monomorphism $\alpha:M\rightarrow E$ and a monomorphism $\beta:I\rightarrow E$. Let $M'=\alpha(M)$ and $I'=\beta(I)$. We easily check that $M'_P=(M'+I')_P=I'_P$ for each maximal ideal $P$ of $R$. We conclude that $M'=I'$ and consequently $M\cong I$.
\end{proof}

Now we can prove the main result of this section.

\begin{theorem} \label{T:G-sr} Let $R$ be a ring. The  following assertions are equivalent:
\begin{enumerate}
\item  $R$ is  $1$-semiregular;
\item each finitely presented cyclic $R$-module is periodic;
\item  $R$ is coherent and for each maximal ideal $P$, $PR_P$ is either zero or a simple module;
\item $R$ is a B\'ezout ring, each prime ideal is maximal and, for each $a\in R$ there exist two idempotents $e_1$ and $e_2$ such that \[R/aR\cong Re_1\oplus (Re_2/Rae_2)\ \mathrm{and}\ Re_2\cap (0:a)=Rae_2;\]
\item $R$ is coherent and arithmetical, and $I^2$ is a pure ideal for each ideal $I$ of $R$;
\item each cyclic $R$-module is F-periodic.
\end{enumerate} 
\end{theorem}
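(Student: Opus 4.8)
The plan is to establish the cycle $(1)\Rightarrow(2)\Rightarrow(3)\Rightarrow(4)\Rightarrow(5)\Rightarrow(6)\Rightarrow(1)$; essentially all the genuine content lies in $(2)\Rightarrow(3)$ and in the passage between finitely generated and arbitrary ideals needed for $(6)$. The implication $(1)\Rightarrow(2)$ is trivial. For $(2)\Rightarrow(3)$ I would first observe that $(2)$ forces $R$ to be semiregular: if $R/I$ is periodic it embeds into its middle term, a finitely generated projective module, hence into a free module, so Proposition~\ref{P:semireg}(3) holds; in particular $R$ is coherent. Then I localize — for a maximal ideal $P$ every finitely presented cyclic $R_P$-module is again periodic, by the localization argument of Proposition~\ref{P:locS} — and reduce to showing that $PR_P$ is zero or simple.

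This local statement is the heart of the proof. Assume $\mathfrak{m}:=PR_P\ne 0$ and pick $0\ne a\in\mathfrak{m}$. Since $R_P$ is coherent, $R_P/aR_P$ is finitely presented, hence periodic, so there is an exact sequence $0\to R_P/aR_P\to R_P^{\,n}\to R_P/aR_P\to 0$ (a finitely generated projective over a local ring being free). A change of basis turns the surjection into the canonical projection, whereupon its kernel is identified with $aR_P\oplus R_P^{\,n-1}$; since this is isomorphic to the cyclic module $R_P/aR_P$, we must have $n=1$ and $(0:a)=aR_P$, so $a^2=0$. Applying the same reasoning to any $ab\ne 0$ gives $(0:ab)=abR_P$, and then $a\in(0:ab)$ forces $a\in abR_P$, hence $a=0$ unless $ab=0$; thus $\mathfrak{m}^2=0$. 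Finally $R_Pa\cong R_P/(0:a)=R_P/aR_P$, and $R_Pa$ is simple because $\mathfrak{m}a=0$, so $aR_P=\mathfrak{m}$ and $\mathfrak{m}$ is a simple module. This proves $(3)$.

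For $(3)\Rightarrow(4)\Rightarrow(5)$ I would first extract the global picture implied by $(3)$: $R/\mathrm{Nil}(R)$ is locally a field, hence von Neumann regular; $\mathrm{Nil}(R)^2=0$; and every prime is maximal, since a prime quotient $R/Q$ is a von Neumann regular domain, i.e.\ a field. Lifting idempotents modulo the square-zero ideal $\mathrm{Nil}(R)$ — and using Proposition~\ref{P:distr} applied to $R/\mathrm{Nil}(R)$ — one gets that $R$ is a B\'ezout ring, and that for $a\in R$ the spectrum splits into a clopen part where $a$ becomes $0$ locally and a clopen part where $a$ becomes a generator of the maximal ideal; this yields the decomposition $R/aR\cong Re_1\oplus(Re_2/Rae_2)$ with $Re_2\cap(0:a)=Rae_2$, and coherence of $R$ is visible from it. For $(4)\Rightarrow(5)$: a B\'ezout ring is arithmetical, coherence comes from the decomposition, and $(I^2)_P=(I_P)^2\in\{0,R_P\}$ at every maximal ideal (since locally $\mathfrak{m}_P^2=0$), so $I^2$ is locally — hence globally — pure.

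To close the loop I would argue as follows. From $(5)$ the local structure of $(3)$ is recovered (an arithmetical ring is locally a valuation ring, and $\mathfrak{m}_P^2=0$ with $\mathfrak{m}_P$ uniserial forces $\mathfrak{m}_P$ simple or zero), so $R_P$ is a field or a local principal ideal Artinian ring, and over such rings the decomposition in $(4)$ exhibits every finitely presented cyclic module as $1$-periodic; a general cyclic module is a directed union of such, and a compatible passage to the colimit in the periodic resolutions produces an $F$-periodic resolution (a direct limit of projectives being flat), giving $(6)$. Finally $(6)\Rightarrow(1)$: a homomorphism from a finitely presented module into a direct limit of free modules factors through one of them, so any finitely presented cyclic module that embeds into a flat module already embeds into a free module; by Proposition~\ref{P:semireg} this makes $R$ semiregular and hence coherent, and then in an $F$-periodic resolution $0\to R/I\to F_1\to R/I\to 0$ with $I$ finitely generated the middle term $F_1$ is finitely generated and flat over a coherent ring, hence finitely presented, hence projective — so $R/I$ is periodic, i.e.\ $(2)$ holds, and since finitely presented modules over a B\'ezout ring are direct sums of cyclics (\cite{WiWi75}) we conclude $(1)$. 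I expect the main obstacle to be $(2)\Rightarrow(3)$, namely extracting the rigid identity $(0:a)=aR_P$ and then $\mathfrak{m}^2=0$ from the mere existence of a periodic resolution; the secondary difficulty is arranging the periodic resolutions compatibly along a directed system of cyclic modules in $(5)\Rightarrow(6)$.
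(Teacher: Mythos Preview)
Your argument for $(2)\Rightarrow(3)$ is correct and in fact cleaner than the paper's: you extract $(0:a)=aR_P$ directly by comparing annihilators of the isomorphic cyclic modules $aR_P$ and $R_P/aR_P$, and the Nakayama trick $a\in(0:ab)=abR_P$ neatly gives $\mathfrak m^2=0$. The paper instead builds a comparison diagram to a presentation $0\to A\to R\to R/A\to 0$ and reads off that $R$ is a chain ring before concluding.

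The genuine gap is in $(5)\Rightarrow(6)$. Writing $R/A$ as the direct limit of the $R/A_i$ over finitely generated $A_i\subseteq A$ and lifting the transition surjections $R/A_i\to R/A_j$ to maps $P_i\to P_j$ between the projective middle terms does \emph{not} give a coherent system: the lift exists by projectivity, but the induced map on kernels need not be the canonical surjection $R/A_i\to R/A_j$, and there is no reason for the cocycle condition $\phi_{jk}\phi_{ij}=\phi_{ik}$ to hold. You flag this yourself as ``the secondary difficulty'', but it is not resolved, and the idempotent decompositions of $(4)$ depend on the chosen generator $a_i$ in a way that resists a uniform choice. The paper avoids the limit entirely: it works in $R'=R/A^2$ (flat by $(5)$), observes that $A'=A/A^2$ is a coherent distributive module over the von Neumann regular ring $R'/J$, invokes Proposition~\ref{P:distr} to identify $A'$ with an ideal of $R'/J$, and then produces the flat middle term $F$ in one shot as a pushout of $R'$ along an explicit monomorphism $A'\hookrightarrow R/A$. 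This is the step where Proposition~\ref{P:distr} is actually used; your outline cites it only for B\'ezoutness in $(3)\Rightarrow(4)$, which the paper obtains instead from \cite[Theorem~III.6]{Cou03}.

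A smaller issue: in $(6)\Rightarrow(1)$ you pass from ``periodic for finitely presented cyclics'' to ``periodic for all finitely presented modules'' by saying finitely presented modules over a B\'ezout ring split as sums of cyclics. That is the elementary divisor property, not a consequence of B\'ezout alone; you need the zero-dimensionality from $(4)$ together with \cite[Theorem~III.6]{Cou03} (or \cite[(3.8)]{LLS74}) to get it. The paper handles this by proving $(4)\Rightarrow(1)$ directly via the elementary divisor ring structure, rather than routing through $(5)$ and $(6)$.
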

\begin{proof} 
It is obvious that $(1)\Rightarrow (2)$.

$(2)\Rightarrow (3)$. From $(2)$ we get that $R$ satisfies condition $(3)$ of Proposition \ref{P:semireg}. Hence $R$ is coherent. It is easy to check that $R_P$ verifies $(2)$ for each maximal ideal $P$. Now, assume that $R$ is local and let $P$ be its maximal ideal. Let $A$ be a finitely generated non-zero proper ideal. We have the following commutative diagram:
\[\begin{matrix}
 {} & {} & {} & {} & {} & {} & 0 & {} & {}\\
{} & {} & {} & {} & {} & {} & \downarrow & {} & {} \\
0 & \rightarrow & R/A & \rightarrow & F & \rightarrow & R/A & \rightarrow & 0 \\
 {} & {} & \downarrow & {} & \downarrow & {} & \downarrow & {} & {} \\
0 & \rightarrow & A & \rightarrow & R & \rightarrow & R/A & \rightarrow & 0 \\
{} & {} & {} & {} & {} & {} & \downarrow & {} & {} \\
{} & {} & {} & {} & {} & {} & 0 & {} & {} 
\end{matrix}\]
where $F$ is a free module and the right vertical map is the identity. Since $A\subseteq P$ the middle vertical homomorphism is surjective. It follows that the left vertical map is surjective too. Hence $A$ is principal and $R$ is a chain ring (valuation ring). Let $K$ be the kernel of the middle vertical homomorphism. It is a direct summand of $F$ and it is isomorphic to the kernel $L$ of the left vertical homomorphism. But $0$ is the sole projective submodule of $R/A$. Consequently $R/A\cong A$. We deduce that $A=(0:A)$. Let $a,b$ be two non-zero elements of $P$. We may assume that $Ra\subseteq Rb$. It follows that $Rb=(0:b)\subseteq (0:a)=Ra$. Hence $Ra=Rb=P$.

$(3)\Rightarrow (4)$. Clearly $R$ is zero-Krull-dimensional and arithmetical. By \cite[Theorem III.6]{Cou03} $R$ is B\'ezout. So, $(0:a)=Rb$ for some $b\in R$. It follows that $D(b)=D(e_1)$, $V(a)\cap V(b)=D(e_2)$ and $D(a)=D(1-e_1-e_2)$ where $e_1$ and $e_2$ are idempotent. We easily deduce that $ae_1=0$, $Ra(1-e_1-e_2)=R(1-e_1-e_2)$ and $Rae_2=Rbe_2$. Consequently the conditions  of the assertion $(4)$ hold.

$(4)\Rightarrow (1)$. 
 Let $M$ be a finitely presented $R$-module. By \cite[Theorem III.6]{Cou03}, $R$ is an elementary divisor ring. So, $M$ is a direct sum of finitely presented cyclic modules by \cite[(3.8)Theorem]{LLS74}. It is easy to see that any direct sum of periodic modules is periodic too. So we may assume that $M=R/Ra$ for some $a\in R$. Now since $Re_1$ is projective it is periodic. Clearly $Rae_2\cong Re_2/Rae_2$. Hence $Re_2/Rae_2$ is periodic too. 

$(2)\Rightarrow (5)$. Let $I$ be an ideal of $R$. Since, for each maximal ideal $P$, either $PR_P=0$ or it is a simple module, the ideal $(I^2)_P$ is either $R_P$ or $0$. Hence $I^2$ is a pure ideal. 

$(5)\Rightarrow (2)$. Let $P$ be a maximal ideal of $R$. Since $R/P^2$ is local and a flat module over $R$ we have $R_P=R/P^2$, and it is a valuation ring. We easily conclude that either $PR_P=0$ or $PR_P$ is a simple module.

Let us observe that the equivalence of the first five conditions is shown.

$(6)\Rightarrow(2)$ is obvious. 

$(1)\Rightarrow (6)$. Let $A$ be an ideal of $R$. If $A=A^2$ then $A$ is a pure ideal by $(5)$. In this case $R/A$ is flat. So, in the sequel we assume that $A\ne A^2$. Let $R'=R/A^2$, $A'=A/A^2$ and $J$ be the Jacobson radical of $R'$. We have $J^2=0$ and $A'\subseteq J$. So, $A'$ is a module over the von Neumann regular ring $R'/J$. Clearly, $A'$ is distributive, and we easily check that it is also coherent over $R'/J$. By Proposition \ref{P:distr} $A'$ is isomorphic to an ideal $B$ of $R'/J$. Let $I$ be the pure ideal of $R'$ which satisfies $I_P=0$ if and only if $A'_P=0$. We easily check that $A'\subseteq I$ and $B=I/I\cap J$. Clearly $I\cap J=A'$. Since $A'\cong I/A'$ and $R/A\cong R'/A'$ there is a monomorphism $\phi:A'\rightarrow R/A$ and its image is of the form $I'/A$ ($I=I'/A^2$), where $I'$ is a pure ideal of $R$. We consider the following commutative pushout diagram:
\[\begin{matrix}
{} & {} & 0 & {} & 0 & {} & 0 & {} & {} \\
{} & {} & \downarrow & {} & \downarrow & {} & \downarrow & {} & {} \\
0 & \rightarrow & A' & \rightarrow & R' & \rightarrow & R/A & \rightarrow & 0 \\
{} & {} & {\scriptstyle{\phi}}\downarrow & {} & \downarrow & {} & \downarrow & {} & {} \\
0 & \rightarrow & R/A & \rightarrow & F & \rightarrow & R/A & \rightarrow & 0 \\
{} & {} & \downarrow & {} & \downarrow & {} & \downarrow & {} & {} \\
0 & \rightarrow & R/I' & \rightarrow & R/I' & \rightarrow & 0 & {} & {} \\
{} & {} & \downarrow & {} & \downarrow & {} & {} & {} & {} \\
{} & {} & 0 & {} & 0 & {} & {} & {} & {} 
\end{matrix}\]
By condition $(5)$ $R'$ is flat. Since $I'$ is a pure ideal, $R/I'$ is flat too. It follows that $F$ is flat and $R/A$ is F-periodic.
\end{proof}

From \cite[Theorem III.6]{Cou03} and Proposition \ref{P:semireg} we deduce the following:
\begin{corollary}\label{C:edr}
Each $1$-semiregular ring $R$ is an elementary divisor ring and a semiregular ring.
\end{corollary}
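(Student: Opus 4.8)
The plan is to read off both conclusions directly from Theorem \ref{T:G-sr} and Proposition \ref{P:semireg}; no new argument is really needed. First I would observe that if $R$ is $1$-semiregular then, by Definition \ref{def-12ring}, every finitely presented $R$-module $M$ is periodic, so it sits in an exact sequence $0\to M\to P_1\to M\to 0$ with $P_1$ projective. Since a projective module is a direct summand of a free module, $M$ embeds in a free module. Hence $R$ satisfies condition $(2)$ of Proposition \ref{P:semireg}, and therefore $R$ is semiregular.

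For the elementary divisor property I would invoke the equivalence $(1)\Leftrightarrow(4)$ of Theorem \ref{T:G-sr}: a $1$-semiregular ring is a B\'ezout ring in which every prime ideal is maximal, hence a B\'ezout ring of Krull dimension at most one (indeed of dimension zero). By \cite[Theorem III.6]{Cou03} such a ring is an elementary divisor ring. This is exactly the implication already used in the proof of $(4)\Rightarrow(1)$ in Theorem \ref{T:G-sr}, so it is legitimate to reuse it here. Consequently $R$ is an elementary divisor ring.

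I do not foresee any genuine obstacle: the corollary is a bookkeeping consequence of the theorem. The only point that deserves a moment's care is to make sure that the hypotheses of \cite[Theorem III.6]{Cou03} are satisfied by the rings described in condition $(4)$ of Theorem \ref{T:G-sr} — namely that being B\'ezout with all primes maximal is enough for that reference to apply — but since the proof of the theorem already relies on this, nothing further is required.
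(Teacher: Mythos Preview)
Your proposal is correct and follows essentially the same route as the paper: semiregularity via Proposition~\ref{P:semireg} (every finitely presented module embeds in a free one), and the elementary divisor property via condition~(4) of Theorem~\ref{T:G-sr} together with \cite[Theorem~III.6]{Cou03}. The paper's proof is just the one-line sentence ``From \cite[Theorem~III.6]{Cou03} and Proposition~\ref{P:semireg} we deduce the following,'' which you have simply unpacked; the only quibble is that ``Krull dimension at most one'' should read ``Krull dimension zero,'' since every prime being maximal forces dimension~$0$.
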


\begin{proposition}\label{P:fact}
Let $R$ be a ring and $A$ a proper finitely generated ideal of $R$. If $R$ is $1$-semiregular then $R/A$ is $1$-semiregular too.
\end{proposition}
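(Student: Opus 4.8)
The plan is to use condition~(3) of Theorem~\ref{T:G-sr}: I will show that $R/A$ is coherent and that, for every maximal ideal $Q$ of $R/A$, the $(R/A)_Q$-module $Q(R/A)_Q$ is zero or simple.

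First I would check that $R/A$ is coherent; this is the only step where the assumption that $A$ is finitely generated intervenes. Using the description of coherence employed in the proof of Proposition~\ref{P:semireg} --- a ring is coherent exactly when the annihilator of every element is finitely generated and the intersection of any two finitely generated ideals is finitely generated --- one reduces everything to computations inside $R$. For $\bar a\in R/A$ with a lift $a\in R$, one has $(0:\bar a)_{R/A}=(A:a)/A$, and the multiplication map $r\mapsto ra$ induces an isomorphism $(A:a)/(0:a)\cong aR\cap A$; since $R$ is coherent, $(0:a)$ and $aR\cap A$ are finitely generated, hence so is $(A:a)$, and hence so is $(0:\bar a)_{R/A}$ over $R/A$. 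Similarly, a finitely generated ideal of $R/A$ has the form $I/A$ with $I\supseteq A$ a finitely generated ideal of $R$, and $(I/A)\cap(J/A)=(I\cap J)/A$ is finitely generated over $R/A$ because $I\cap J$ is finitely generated over $R$. Thus $R/A$ is coherent. (Alternatively, one may invoke the classical fact that a quotient of a coherent ring by a finitely generated ideal is coherent.)

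Next I would verify the local condition. The maximal ideals of $R/A$ are exactly the ideals $Q=P/A$ with $P$ a maximal ideal of $R$ containing $A$, and since localization commutes with the quotient by $A$ we obtain $(R/A)_Q\cong R_P/AR_P$, so that $Q(R/A)_Q\cong PR_P/AR_P$. As $R$ is $1$-semiregular, condition~(3) of Theorem~\ref{T:G-sr} tells us that $PR_P$ is zero or a simple $R_P$-module; in either case $R_P$ has no ideal strictly between $0$ and $PR_P$. Since $A\subseteq P$ forces $AR_P\subseteq PR_P$, the ideal $AR_P$ is $0$ or $PR_P$, and therefore $PR_P/AR_P$ is either $PR_P$ (which is zero or simple) or $0$. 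In every case $Q(R/A)_Q$ is zero or simple, so $R/A$ satisfies condition~(3) of Theorem~\ref{T:G-sr} and is $1$-semiregular.

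The delicate point is the coherence of $R/A$, which is precisely where the hypothesis ``$A$ finitely generated'' is indispensable; once coherence is in hand, the remainder is a routine localization argument based on the fact that each localization $R_P$ is a field or a valuation ring whose maximal ideal is a simple module.
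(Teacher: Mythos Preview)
Your proof is correct and follows essentially the same approach as the paper's own proof, namely verifying condition~(3) of Theorem~\ref{T:G-sr} for $R/A$. The paper's proof is extremely terse (it simply asserts that $R/A$ is coherent and that the local condition is easy to check), whereas you have filled in the details, especially for coherence; your alternative remark that a quotient of a coherent ring by a finitely generated ideal is coherent is presumably what the paper is invoking implicitly.
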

\begin{proof}
If $R$ is $1$-semiregular then $R/A$ is coherent. Now, it is easy to check that the maximal ideal of $(R/A)_{P/A}$ is either zero or a simple module for each $P\in V(A)$. Whence $R/A$ is $1$-semiregular.
\end{proof}

\section{Some constructions of $1$-semiregular rings}
In this section, we show when some classical ring constructions are  $1$-semiregular. This provides  a family of examples of $1$-semiregular rings and allows to shed light on conditions  assumed in some results of Section 2.\medskip

We start with the direct product of rings.

\begin{proposition}\label{P:produit}
Let $(R_{\lambda})_{\lambda\in\Lambda}$ be a family of rings and $R=\prod_{\lambda\in\Lambda}R_{\lambda}$. Then $R$ is $1$-semiregular if and only if $R_{\lambda}$ is $1$-semiregular for each $\lambda\in\Lambda$.
\end{proposition}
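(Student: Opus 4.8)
The plan is to use the local characterization of $1$-semiregularity given by condition (3) of Theorem \ref{T:G-sr}: a ring is $1$-semiregular if and only if it is coherent and for every maximal ideal $P$, the ideal $PR_P$ is either zero or simple. So I would reduce the statement to two pieces: (a) $R=\prod_{\lambda}R_\lambda$ is coherent if and only if each $R_\lambda$ is coherent; and (b) the local condition on maximal ideals of $R$ holds if and only if it holds for each $R_\lambda$. For a finite family, (a) is standard and (b) follows quickly since a finite product decomposes every module as a product, so one can argue componentwise; the real content is the case of an infinite index set $\Lambda$.

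First I would handle the forward direction. Suppose $R$ is $1$-semiregular. Each $R_\lambda$ is a quotient of $R$ by the ideal $I_\lambda = \prod_{\mu\ne\lambda}R_\mu$, which is generated by an idempotent, hence pure. By Proposition \ref{P:locS}(2), $R/I_\lambda\cong R_\lambda$ is $1$-semiregular. This direction is essentially immediate.

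For the converse, assume each $R_\lambda$ is $1$-semiregular. I would verify condition (3) of Theorem \ref{T:G-sr} for $R$. For coherence: one should recall (or prove) that an arbitrary direct product of coherent rings need not be coherent in general, so the key observation is that $1$-semiregular rings are coherent \emph{and} have a very restrictive structure — they are arithmetical B\'ezout rings of Krull dimension zero by condition (4), with every square ideal pure. I would show directly that $R$ is coherent by checking that every finitely generated ideal of $R$ is finitely presented; here the crucial point is that finitely generated ideals of the product are closely controlled by the componentwise structure, using that each $R_\lambda$ is B\'ezout (so $R$ should turn out to be B\'ezout, or at least that the relevant annihilators behave well). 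For the local condition: any maximal ideal $P$ of $R=\prod_\lambda R_\lambda$ either contains all but one of the idempotents $e_\lambda$ (the unit of the $\lambda$-th factor), in which case $R_P\cong (R_\lambda)_{P_\lambda}$ for the induced maximal ideal $P_\lambda$ of $R_\lambda$ and the condition is inherited from $R_\lambda$; or $P$ lies over the "non-principal" part of the Boolean spectrum, i.e. contains the direct sum ideal $\bigoplus_\lambda R_\lambda$, and in that case I expect $R_P$ to be a field (since modulo $\bigoplus R_\lambda$ the ring is von Neumann regular when each $R_\lambda$ is, as each $1$-semiregular ring has every square ideal pure). Thus $PR_P=0$ for such $P$, and the condition holds.

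The main obstacle I anticipate is the coherence of the infinite product: coherence does not pass to arbitrary products for general rings, so I must genuinely exploit the strong structural constraints on $1$-semiregular rings (arithmetical, B\'ezout, zero-dimensional, square ideals pure) to show the product stays coherent — most cleanly by arguing that $R$ is again B\'ezout (reducing finitely generated ideals to principal ones componentwise, then reassembling) and that annihilators of elements are generated by idempotents, so $R$ satisfies condition (4) of Proposition \ref{P:semireg} directly. If a cleaner route presents itself, I would instead verify condition (5) of Theorem \ref{T:G-sr} — namely that $R$ is coherent and arithmetical with $I^2$ pure for every ideal $I$ — since "arithmetical" and "$I^2$ pure" both pass to products by working in each localization, leaving coherence as the only genuine hurdle, which is then dispatched using the B\'ezout property as above.
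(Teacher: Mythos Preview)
Your forward direction is correct and matches the paper. For the converse, however, there is a genuine gap in your local analysis: the claim that $R_P$ is a field whenever $P\supseteq\bigoplus_\lambda R_\lambda$ is false. Take $R_\lambda=k[x]/(x^2)$ for all $\lambda$ in an infinite set and let $a=(x,x,\dots)\in R$. Then $a^2=0$ and $(0:a)=aR$, so for any maximal ideal $P$ containing $\bigoplus_\lambda R_\lambda$ (hence containing $a$) no element outside $P$ kills $a$, and $a/1$ is a nonzero nilpotent in $R_P$. Your justification---that $R/\bigoplus_\lambda R_\lambda$ is von Neumann regular---fails for the same reason: the image of $a$ there is a nonzero element of square zero. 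The alternative route via condition~(5) does not sidestep this, since ideals of $R$ are not products of ideals of the $R_\lambda$, and checking ``$I^2$ pure'' for an arbitrary ideal ultimately requires the same understanding of the localizations $R_P$.

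The paper's converse is entirely different and avoids the spectrum of $R$ altogether: it verifies condition~(2) of Theorem~\ref{T:G-sr} directly. Given a finitely presented cyclic $C$, one has $C\cong\prod_\lambda C_\lambda$ with $C_\lambda=R_\lambda\otimes_R C$, and condition~(4) places each $C_\lambda$ in a sequence $0\to C_\lambda\to F_\lambda\to C_\lambda\to 0$ with $F_\lambda$ a direct summand of $R_\lambda^3$. The uniform rank bound is the decisive point: then $\prod_\lambda F_\lambda$ is a summand of $R^3$, hence projective, and $C$ is periodic. Your strategy \emph{can} be repaired, but it needs a different local argument: once $R$ is B\'ezout and coherent, observe that purity of $(a_\lambda)^2$ in each $R_\lambda$ gives $a_\lambda^2\in a_\lambda^4 R_\lambda$ uniformly in $\lambda$, hence $a^2\in a^3R$ for every $a\in R$; in the valuation ring $R_P$ this forces $a^2=0$ for all $a\in PR_P$, whence $(PR_P)^2=0$, and then principality of annihilators shows $PR_P$ is principal, hence simple or zero.
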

\begin{proof} For $\lambda\in\Lambda$ let $e_{\lambda}=(\delta_{\lambda,\mu})_{\mu\in\Lambda}$, where $\delta_{\lambda,\mu}$ is the Kronecker symbol. Since $R_{\lambda}\cong R/(1-e_{\lambda})R$, $R_{\lambda}=S^{-1}_{\lambda}R$ where $S_{\lambda}=\{e_{\lambda}\}$. So, if $R$ is $1$-semiregular then $R_{\lambda}$ is $1$-semiregular by Proposition \ref{P:locS}.

Conversely, let $C$ be a finitely presented cyclic $R$-module and for each $\lambda\in\Lambda$ let $C_{\lambda}=R_{\lambda}\otimes_RC$. By \cite[12.9]{Wis91} $C\cong\prod_{\lambda\in\Lambda}C_{\lambda}$. By Theorem \ref{T:G-sr}(4), for each $\lambda\in\Lambda$, there exists a direct summand $F_{\lambda}$ of $R^3_{\lambda}$ and an exact sequence  \[0\rightarrow C_{\lambda}\rightarrow F_{\lambda} \rightarrow C_{\lambda}\rightarrow 0.\] Let $F=\prod_{\lambda\in\Lambda}F_{\lambda}$. Then $F$ is a direct summand of $R^3$ and $C$ is periodic. We conclude by Theorem \ref{T:G-sr}(2).
\end{proof}

The following example shows that $R/A$ is not generally $1$-semiregular if we do not assume that $A$ is finitely generated in Proposition \ref{P:fact}.

\begin{example}
Let $V$ be a local $1$-quasi-Frobenius ring which is not a field, $vV$ its maximal ideal and $\Lambda$ an infinite set. We put $R=V^{\Lambda}$. For each $\lambda\in\Lambda$ we define $e_{\lambda}$ as in the proof of Proposition \ref{P:produit}. Let $A=v(\sum_{\lambda\in\Lambda}Re_{\lambda})$. Then $R$ is $1$-semiregular but not $R/A$. (If we put $\bar{r}=r+A$ for each $r\in R$ and $\overline{R}=R/A$ then we have $(0: \bar{v})=\overline{R}\bar{v}+(\sum_{\lambda\in\Lambda}\overline{R}\bar{e}_{\lambda})$.)
\end{example}

Let $A$ be a ring and $E$  an $A$-module. The {\bf trivial ring extension} of $A$ by $E$ (also called the idealization of $E$ over $A$) is the ring $R := A\propto E$ whose underlying group is $A\times E$ with multiplication given by $(a, e)(a', e') = (aa', ae' + a'e)$ (for more details about trivial ring extension, see, for instance, \cite{AW}). In Theorem \ref{T:triv}, we characterize when a trivial ring extension is $1$-semiregular.  For this, we need the following  two lemmas and  proposition.

\begin{lemma}\label{L:princ}
Let $A$ be a von Neumann regular ring and $E$ an $A$-module such that $E_P$ is a vector space of rank $\leq 1$ over $A_P$ for each maximal ideal $P$. Let $0\ne x\in E$ and let $\epsilon$ be the idempotent of $A$ satisfying $(0:x)=A(1-\epsilon)$. Then $\epsilon E=Ax$.
\end{lemma}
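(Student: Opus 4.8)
The statement to prove is: with $A$ von Neumann regular, $E$ a module with $E_P$ a vector space of dimension $\leq 1$ over $A_P$ for each maximal $P$, $0 \neq x \in E$, and $\epsilon$ the idempotent with $(0:x) = A(1-\epsilon)$, we have $\epsilon E = Ax$.

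The plan is to argue locally at each maximal ideal $P$ and then patch. First I would record the elementary containment: since $\epsilon x = x$ (because $1-\epsilon$ annihilates $x$, so $x = \epsilon x + (1-\epsilon)x = \epsilon x$), we get $Ax = A\epsilon x \subseteq \epsilon E$. For the reverse inclusion it suffices to show $(\epsilon E)_P \subseteq (Ax)_P$ for every maximal ideal $P$, since a submodule inclusion can be checked locally. Fix such a $P$. There are two cases according to whether $\epsilon \in P$ or $\epsilon \notin P$. If $\epsilon \in P$, then $1-\epsilon \notin P$, so $1-\epsilon$ becomes a unit in $A_P$; but $1-\epsilon$ annihilates $x$, forcing $x = 0$ in $E_P$, hence $(Ax)_P = 0$. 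On the other hand $\epsilon$ becomes $0$ in $A_P$, so $(\epsilon E)_P = 0$ as well, and the inclusion is trivial. If $\epsilon \notin P$, then $\epsilon$ is a unit in $A_P$, so $(\epsilon E)_P = E_P$; I must then show $E_P = (Ax)_P = A_P \cdot (x/1)$.

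The key point in this second case is that $x/1$ is a nonzero element of the $A_P$-vector space $E_P$: indeed, if $x/1 = 0$ in $E_P$ then some $s \notin P$ kills $x$, so $s \in (0:x) = A(1-\epsilon)$, whence $s = s(1-\epsilon)$ and $s\epsilon = 0$; but $\epsilon \notin P$ and $s \notin P$ would make $s\epsilon \notin P$, a contradiction. Since $E_P$ has dimension $\leq 1$ and contains the nonzero vector $x/1$, it is exactly the line $A_P \cdot (x/1) = (Ax)_P$. This establishes $(\epsilon E)_P = (Ax)_P$ in both cases, so $\epsilon E = Ax$.

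I do not anticipate a serious obstacle here; the argument is a routine local-global patching once one observes the interplay between the idempotent $\epsilon$ and the annihilator. The only point requiring a little care is the case split and the verification that $x/1 \neq 0$ in $E_P$ precisely when $\epsilon \notin P$ — this is where the hypothesis $(0:x) = A(1-\epsilon)$ is used in an essential way, and it is worth writing out cleanly so that the rank-$\leq 1$ hypothesis can then be invoked to pin down $E_P$ exactly.
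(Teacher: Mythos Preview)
Your proof is correct and follows essentially the same approach as the paper: both argue locally at each maximal ideal, splitting into the cases $\epsilon\in P$ and $\epsilon\notin P$ (the paper phrases this as $P\in D(1-\epsilon)$ versus $P\in D(\epsilon)$), and in the second case use that $x$ has nonzero image in $E_P$ together with the rank~$\leq 1$ hypothesis to conclude $E_P=A_Px$. Your write-up is more explicit about the containment $Ax\subseteq\epsilon E$ and the verification that $x/1\ne 0$ when $\epsilon\notin P$, but the strategy is identical.
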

\begin{proof}
It  is clear that $\epsilon E_P=A_Px=0$ if $P\in D(1-\epsilon)$. Let $P\in D(\epsilon)$. Since $(0:x)\subseteq P$ we have $A_Px\ne 0$. But $E_P$ is simple so $\epsilon E_P=E_P=A_Px$. We conclude that $\epsilon E=Ax$.
\end{proof}

\begin{proposition}\label{P:zeroKrull}\textnormal{\cite[Proposition 3.8]{Couc16}}
Let $A$ be a ring whose prime ideals are  maximal. Let $X$ be the set of all maximal ideals $P$ such that $PA_P=0$. We denote by $I$,  the kernel of the naturel map $A\rightarrow \prod_{P\in X}A_P$. Then $A/I$ is von Neumann regular and if, in addition $A$ is coherent, then $I$ is a pure submodule of $A$ and $X=V(I)$.
\end{proposition}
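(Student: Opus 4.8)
The plan is to identify $I$ concretely, deduce von Neumann regularity of $A/I$ without using coherence, and then obtain the coherence part from a compactness argument in $\mathrm{Spec}(A)$. First I would show $I=\bigcap_{P\in X}P$. For $P\in X$ put $\mathfrak{n}_P=\ker(A\to A_P)$; since $A/\mathfrak{n}_P$ embeds in the field $A_P$ it is a domain, so $\mathfrak{n}_P$ is prime, and if $sa=0$ with $s\notin P$ then $a\in P$, so $\mathfrak{n}_P\subseteq P$; conversely $a\in P$ gives $a/1\in PA_P=0$, hence $sa=0$ for some $s\notin P$, i.e. $a\in\mathfrak{n}_P$. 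Thus $\mathfrak{n}_P=P$, and therefore $I=\bigcap_{P\in X}\mathfrak{n}_P=\bigcap_{P\in X}P$.

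Next I would check that $A/I$ is von Neumann regular. It is zero-dimensional, being a quotient of the zero-dimensional ring $A$; and it is reduced, for if $a^n\in I$ then $a^n/1=0$ in the field $A_P$ for every $P\in X$, hence $a/1=0$ in $A_P$, so $a\in\mathfrak{n}_P$ for all $P\in X$ and $a\in I$. A reduced commutative ring of Krull dimension zero is von Neumann regular, which settles the first assertion; no coherence is needed here.

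Now assume $A$ is coherent. The crucial point is that $X$ is closed in $\mathrm{Spec}(A)$. Since $\mathrm{Spec}(A)$ is homeomorphic to the spectrum of its reduction, which is reduced and zero-dimensional, hence von Neumann regular, $\mathrm{Spec}(A)$ is a Boolean space; in particular it is Hausdorff. A maximal ideal $P$ lies outside $X$ exactly when $PA_P\neq 0$, that is, exactly when $a\in P$ and $(0:a)\subseteq P$ for some $a\in A$; hence $\mathrm{Spec}(A)\setminus X=\bigcup_{a\in A}V(aA+(0:a))$ and therefore $X=\bigcap_{a\in A}D(aA+(0:a))$. By coherence each $(0:a)$, and hence each $aA+(0:a)$, is finitely generated, so each $D(aA+(0:a))$ is open and quasi-compact, thus closed (quasi-compact subsets of a Hausdorff space are closed), i.e. clopen; consequently $X$ is an intersection of clopen sets and is closed. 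It follows that $X$ equals its own closure $V\big(\bigcap_{P\in X}P\big)=V(I)$. Finally $I$ is pure because $A/I$ is flat: for every maximal ideal $P$, either $P\notin X=V(I)$, so $I_P=A_P$ and $(A/I)_P=0$, or $P\in X$, so $A_P$ is a field and $I\subseteq P$ forces $I_P\subseteq PA_P=0$ and $(A/I)_P=A_P$; in both cases $(A/I)_P$ is flat over $A_P$.

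The only genuine obstacle I foresee is the middle step: recognizing that zero Krull dimension alone makes $\mathrm{Spec}(A)$ Hausdorff, and that coherence is precisely what allows one to write the (always open) locus $X$ as an intersection of quasi-compact opens, hence of clopens. The rest is routine.
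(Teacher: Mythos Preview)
The paper does not supply a proof of this proposition; it is quoted from \cite[Proposition 3.8]{Couc16} and used as a black box, so there is no in-paper argument to compare yours against.

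Your argument is correct. The identification $\mathfrak{n}_P=P$ for $P\in X$, giving $I=\bigcap_{P\in X}P$, is right, and the ``reduced plus zero-dimensional'' characterization of von Neumann regularity handles the first claim cleanly. For the coherent case, your key observation---that $\mathrm{Spec}(A)\cong\mathrm{Spec}(A/\sqrt{0})$ is a Boolean space because $A/\sqrt{0}$ is von Neumann regular, and that coherence makes each $aA+(0:a)$ finitely generated so that $D(aA+(0:a))$ is quasi-compact open, hence clopen---is exactly what forces $X=\bigcap_{a}D(aA+(0:a))$ to be closed. Then $X=\overline{X}=V\big(\bigcap_{P\in X}P\big)=V(I)$, and the local flatness check for $A/I$ (zero at $P\notin V(I)$, all of $A_P$ at $P\in X$) gives purity of $I$.

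One small quibble: in your closing paragraph you refer to $X$ as the ``always open'' locus. That is not true in general without coherence; what is always true is that each $D(aA+(0:a))$ is open, so $X$ is a $G_\delta$, but not necessarily open. This slip is harmless, since your proof never uses openness of $X$, only its closedness under the coherence hypothesis.
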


\begin{lemma}\label{L:ann}
Let $A$ be a $1$-semiregular ring, $E$ a coherent $A$-module such that $\mathrm{Supp}(E)\subseteq V(I)$ where $I$ is the ideal of $A$ defined in Proposition \ref{P:zeroKrull}. Assume that $E_P$ is a simple module for each maximal ideal $P$. Then, for any $a\in A$ and $x\in E$ there exists $y\in (0:_Ea)$ such that $(0:_Ea)=Acy+bE$ where $Ab=(0:a)\cap (0:x)$ and $Ac=(0:b)$.
\end{lemma}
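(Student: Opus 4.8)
The plan is to reduce to the von Neumann regular situation by passing to $A/I$ and then apply Lemma~\ref{L:princ}. The key observation is that, since $\mathrm{Supp}(E)\subseteq V(I)$, the $A$-module $E$ is in fact a module over $\bar A:=A/I$, and by Proposition~\ref{P:zeroKrull} the ring $\bar A$ is von Neumann regular (here we use that a $1$-semiregular ring is coherent with all prime ideals maximal, by Theorem~\ref{T:G-sr}). Moreover $E_P$ simple for each maximal ideal $P$ exactly says that $E$, viewed over $\bar A$, has the property that $E_{\bar P}$ is a one-dimensional vector space over $\bar A_{\bar P}$ for each maximal ideal $\bar P$ of $\bar A$, so Lemma~\ref{L:princ} is available after the base change.

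First I would fix $a\in A$ and $x\in E$ and set $a'$ to be the image of $a$ in $\bar A$. Working over $\bar A$, let $\epsilon$ be the idempotent of $\bar A$ with $(0:_{\bar A}a')=\bar A(1-\epsilon)$; by Lemma~\ref{L:princ} applied to the element $a'x$ (or directly: $(0:_E a') $ localizes correctly) one gets that the submodule $(0:_E a')=(0:_E a)$ is generated over $\bar A$ by a single element. The point is that $(0:_E a)$ is a cyclic $\bar A$-module, say $(0:_E a)=\bar A\, \bar y$ for some $\bar y\in E$; lift $\bar y$ to $y\in (0:_E a)$ (the annihilator computations over $A$ and over $\bar A$ agree because $IE=0$). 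Next I would produce the idempotents $b,c$: put $J=(0:_A a)\cap(0:_A x)$, which is a finitely generated, hence principal, ideal of the Bézout ring $A$ (Theorem~\ref{T:G-sr}(4)), so $J=Ab$; and $(0:_A b)$ is again principal, $(0:_A b)=Ac$. Since $A$ is a Bézout ring with all primes maximal, $b$ and $c$ may be taken to be such that $bc$ behaves like a "partial idempotent" decomposition: on $D(b)$ the element $a$ is invertible off the support issues, while on $V(b)$ one has control via $x$.

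The main work — and the expected obstacle — is the patching argument showing $(0:_E a)=Acy+bE$. I would check this locally at each maximal ideal $P$ of $A$ (equivalently of $\bar A$, on $V(I)$). If $b\notin P$ then $Ab_P=A_P$ so $bE_P=E_P\supseteq (0:_Ea)_P$ and the equality is clear. If $b\in P$, then by definition of $b=\gcd$-type generator of $(0:a)\cap(0:x)$ one analyzes two subcases according to whether $a\in P$ or $x$ generates $E_P$; in the branch where $a\notin P$ we get $(0:_E a)_P=0$ and also $cy\in(0:_Eb)\cdot y$ forces $(Acy)_P=0=(bE)_P$; in the branch where $a\in P$, simplicity of $E_P$ together with $(0:_Ea)=\bar A y$ gives $(0:_Ea)_P=A_P y$, and one checks $c\notin P$ in this case (because $(0:_A b)=Ac$ and $b\in P$ with $P$ maximal in a von Neumann regular quotient forces the complementary idempotent out of $P$), so $(Acy)_P=A_Py=(0:_Ea)_P$. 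Collating the local equalities and using that two finitely generated submodules of a coherent module over a coherent ring which agree at every maximal ideal are equal, we conclude $(0:_Ea)=Acy+bE$. The delicate bookkeeping is making the case division on $P$ exhaustive and consistent with the single global choices of $b$, $c$, and $y$; I would organize it by first recording the idempotent supports $D(b)$, $V(b)\cap D(a)$, $V(b)\cap V(a)$ explicitly and verifying the claimed identity on each of these three clopen pieces.
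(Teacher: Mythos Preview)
Your reduction to $\bar A=A/I$ and the intent to verify the identity $(0:_Ea)=Acy+bE$ locally are in line with the paper's approach, but there is a genuine gap in how you produce $y$.

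The central error is the claim that $(0:_Ea)$ is a cyclic $\bar A$-module. Over $\bar A$ one has $\bar A\bar a=\bar Ae$ for an idempotent $e$, and (as the paper observes, using that every $\bar A$-module is FP-injective) $(0:_Ea)=(1-e)E$. This is a direct summand of $E$, but $E$ is only assumed coherent, not finitely generated, so $(1-e)E$ has no reason to be cyclic. Your invocation of Lemma~\ref{L:princ} ``applied to the element $a'x$'' does not help: that lemma, applied to an element $z\in E$, only says that the cyclic module $Az$ is the summand $\epsilon_zE$ cut out by the idempotent of $z$; it says nothing about $(0:_Ea)$. Consequently your element $y$ (a putative generator of $(0:_Ea)$) is not well defined, and the step ``$(0:_Ea)_P=A_Py$'' in the branch $b\in P$, $a\in P$ has no foundation.

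What is missing is precisely the role of $x$ in constructing $y$. The paper applies Lemma~\ref{L:princ} to $x$ to obtain an idempotent $\epsilon\in A$ with $(0:x)=A(1-\epsilon)$ and $\epsilon E=Ax$; it then decomposes
\[
(0:_Ea)=(1-e)E=\epsilon(1-e)E\oplus(1-\epsilon)(1-e)E,
\]
identifies the first summand as $A\,\epsilon(1-e)x$ (cyclic) and the second as $bE$ (since $\bar b=(1-e)(1-\epsilon)$), and takes $y=\epsilon(1-e)x$, checking $cy=y$. With this explicit $y$ your local verification would go through; without it, the case $b\in P$, $a\in P$, $E_P\neq0$ cannot be closed, because nothing forces a randomly chosen $y\in(0:_Ea)$ to be nonzero at every such $P$.
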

\begin{proof}
Let $A'=A/I$. For each $r\in A$ we denote by $\bar{r}$ the image of $r$ in $A'$. Since $A'$ is von Neumann regular then $A'\bar{a}=A'e$ where $e$ is an idempotent of $A'$. Since $E$ is a module over $A'$, $E$ is FP-injective, whence $(0:_Ea)=(1-e)E$. The submodule $Ax$ is finitely presented and flat over $A'$ and $A$. It follows that $Ax$ is isomorphic to a direct summand of $A$. So, there exists an idempotent $\epsilon\in A$ such that $(0:x)=A(1-\epsilon)$.
It follows that $(1-e)E=\epsilon(1-e)E+(1-\epsilon)(1-e)E$. By Lemma \ref{L:princ} $\epsilon E=Ax$. We get that $\epsilon(1-e)E=A\epsilon(1-e)x$. From $\bar{b}=(1-e)(1-\epsilon)$ we get $\bar{c}=(e+\epsilon-e\epsilon)$. We take $y=(1-e)\epsilon x$. We easily check that $y=cy$. Therefore, $(0:_Ea)=Acy+bE$.
\end{proof}

\begin{theorem}\label{T:triv}
Let $A$ be a ring, $E$ a non-zero $A$-module and $R=A\propto E$ the trivial ring extension of $A$ by $E$. The following two conditions are equivalent:
\begin{enumerate}
\item $R$ is $1$-semiregular;
\item $A$ and $E$ verify the following properties:
\begin{enumerate} 
\item $A$ is $1$-semiregular,
\item $A_P$ is a field for each $P\in\mathrm{Supp}(E)$,
\item $E_P$ is a simple module for each $P\in\mathrm{Supp}(E)$,
\item $E$ is FP-injective and coherent.
\end{enumerate}   
\end{enumerate}
\end{theorem}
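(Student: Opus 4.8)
The plan is to characterize $1$-semiregularity of $R = A \propto E$ via Theorem~\ref{T:G-sr}(3): $R$ is $1$-semiregular if and only if $R$ is coherent and $PR_P$ is zero or simple for every maximal ideal $P$. Throughout I will use the standard facts about trivial extensions: the maximal ideals of $R$ are the ideals $\mathfrak{P} = P \propto E$ with $P$ maximal in $A$; $R_{\mathfrak P} \cong A_P \propto E_P$, with maximal ideal $P A_P \propto E_P$; $R/\mathfrak P \cong A/P$; and $0 \propto E$ is a square-zero ideal which, as an $R$-module, is $E$ with its $A$-structure inflated along $R \twoheadrightarrow A$. I also use Proposition~\ref{P:locS} (localization preserves $1$-semiregularity) and Corollary~\ref{C:edr} ($1$-semiregular $\Rightarrow$ Bézout and elementary divisor).

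$(1) \Rightarrow (2)$. Localizing at $\mathfrak P = P \propto E$, the ring $A_P \propto E_P$ is $1$-semiregular, so its maximal ideal $P A_P \propto E_P$ is zero or simple; since $0 \propto E_P$ is a submodule of it, $E_P \neq 0$ forces $0 \propto E_P = P A_P \propto E_P$ of length one, whence $P A_P = 0$ (so $A_P$ is a field) and $E_P$ is simple over $A_P$. This is (b) and (c). The local condition for $A$ then reads off: $P A_P = 0$ when $P \in \mathrm{Supp}(E)$, and $P A_P$ is isomorphic to the maximal ideal of $R_{\mathfrak P}$, hence zero or simple, when $P \notin \mathrm{Supp}(E)$. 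Coherence of $A$ is obtained by transporting along the retraction $A \hookrightarrow R \twoheadrightarrow A$: applying $R \twoheadrightarrow A$ to a finite generating set of the ($R$-)syzygy module of elements $(a_i, 0)$ — finitely generated because $R$ is coherent — yields generators of the $A$-syzygy module of the $a_i$; likewise $(0 :_R (x,0)) = (0 :_A x) \propto (0 :_E x)$ and $R(a,0) \cap R(b,0)$ project onto $(0:_A x)$ and $Aa \cap Ab$, and with $A$ Bézout these give coherence. Hence (a). For (d): a finitely generated $E_0 \subseteq E$ gives the finitely generated, hence finitely presented, ideal $0 \propto E_0$ of $R$; tensoring its $R$-presentation with $R/(0 \propto E) = A$ (using $(0 \propto E)(0 \propto E_0) = 0$) yields an $A$-presentation of $E_0$, so $E$ is coherent. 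Finally, for $M$ finitely presented over the coherent ring $A$ one has $\mathrm{Ext}^1_A(M, E)_P = \mathrm{Ext}^1_{A_P}(M_P, E_P) = 0$ for all $P$ (as $E_P$ is $0$ or a vector space over the field $A_P$), so $E$ is FP-injective.

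$(2) \Rightarrow (1)$. The local condition is immediate: $R_{\mathfrak P} = A_P \propto E_P$ equals $A_P$ with maximal ideal zero or simple by (a) when $P \notin \mathrm{Supp}(E)$, and is $\cong A_P[x]/(x^2)$ with maximal ideal $0 \propto E_P$ of length one by (b) and (c) when $P \in \mathrm{Supp}(E)$. The substance is the coherence of $R$. I would show each finitely generated ideal $\mathfrak a = \sum_{i=1}^n R(a_i, x_i)$ is finitely presented by dissecting its syzygy module: a syzygy $((r_i, e_i))$ is exactly a syzygy $(r_i)$ of $(a_i)$ over $A$ — finitely generated, say on $\rho_1, \dots, \rho_m$, since $A$ is coherent — together with $(e_i) \in E^n$ solving $\sum_i e_i a_i = - \sum_i r_i x_i$; writing $(r_i) = \sum_j s_j \rho_j$ and $\xi_j = \sum_i \rho_{ji} x_i$, solvability is $\sum_j s_j \bar\xi_j = 0$ in $E / IE$ with $I = \sum_i A a_i$, and the remaining freedom in $(e_i)$ is governed by the $E$-relations among the $a_i$. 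Keeping these finite amounts to showing that the relevant annihilator-type submodules of $E$ and of its quotients are finitely generated, which is precisely what Lemma~\ref{L:princ}, Proposition~\ref{P:zeroKrull} and Lemma~\ref{L:ann} are built for: conditions (a)--(b) give $\mathrm{Supp}(E) \subseteq V(I)$ for the ideal $I$ of Proposition~\ref{P:zeroKrull} (so $E$ lives over the von Neumann regular ring $A/I$), which together with (c)--(d) is exactly the hypothesis of Lemma~\ref{L:ann} and yields an explicit finite description $Acy + bE$ of each $(0 :_E a)$; feeding this in shows the syzygies above are finitely generated. Then Theorem~\ref{T:G-sr}(3) applies.

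The main obstacle is the coherence of $R$ in $(2) \Rightarrow (1)$: finitely generated ideals of a trivial extension intertwine data from $A$ and from $E$, and making the syzygy bookkeeping terminate requires the precise control of annihilators in $E$ afforded by Lemma~\ref{L:ann} (which in turn rests on (c), (d) and the von Neumann regular quotient of Proposition~\ref{P:zeroKrull}). By contrast, $(1) \Rightarrow (2)$ is comparatively soft, the only mildly delicate points being the coherence of $A$ and of $E$, both handled by pushing presentations and syzygies across the ring retraction $R \twoheadrightarrow A$.
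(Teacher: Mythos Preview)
Your $(1)\Rightarrow(2)$ is sound and roughly parallel to the paper, which however reaches (b)--(d) more quickly by invoking \cite[Corollary 1.4]{Cou15}.

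The $(2)\Rightarrow(1)$ coherence argument has a genuine gap. Your plan is to control the syzygy module $S\subset R^n$ of $((a_i,x_i))$ via the projection $\pi:R^n\to A^n$: the image $\pi(S)$ sits inside the $A$-syzygies of $(a_i)$, and $\ker(\pi|_S)=0\propto\{(e_i)\in E^n:\sum a_ie_i=0\}$. The problem is that this kernel need not be finitely generated, even as an $R$-module. Already for $n=1$ it equals $0\propto(0:_Ea)$, and Lemma~\ref{L:ann} gives $(0:_Ea)=Acy+bE$, where $bE$ is typically not finitely generated over $A$ (take $A$ an infinite product of copies of a field and $E$ the corresponding direct sum). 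Since $R$ acts on $0\propto E$ through $A$, finite generation over $R$ and over $A$ coincide there, so the short exact sequence $0\to\ker(\pi|_S)\to S\to\pi(S)\to 0$ cannot be used to bound generators of $S$ by summing generators of the two ends. Your sentence ``feeding this in shows the syzygies above are finitely generated'' is precisely where the argument breaks.

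The paper's route avoids this. The local condition you already checked makes each $R_{\mathfrak P}$ a valuation ring, so $R$ is arithmetical (the paper quotes \cite[Corollary 1.4]{Cou15}); then \cite[Corollary 1.11]{ShWi74} reduces coherence of $R$ to showing that each annihilator $(0:(a,x))$ is finitely generated. The paper shows it is in fact \emph{principal}, $(0:(a,x))=R(b,y)$, with $b$ and $y$ supplied by Lemma~\ref{L:ann}. The point you are missing is that the non-finitely-generated piece $bE$ is absorbed by $R$-multiplication on that single generator: $(b,y)(0,w)=(0,bw)$, so $0\propto bE\subseteq R(b,y)$ even though $bE$ is not finitely generated over $A$. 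This is exactly why exhibiting a principal $R$-generator succeeds where the projection-to-$A$ decomposition fails.
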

\begin{proof}
$(1)\Rightarrow (2)$. We use the notations of Proposition \ref{P:zeroKrull}. From Corollary \ref{C:edr} and \cite[Corollary 1.4]{Cou15} we deduce that $A$ is arithmetical, $E$ is FP-injective, $E_P$ is uniserial for each maximal ideal $P$ of $A$ and $\mathrm{Supp}(E)\subseteq X$. We easily see that $PA_P$ is simple for each maximal ideal $P$ of $A$. Let $a\in A$. Then $(0:(a,0))=R(b,x)$ for some $b\in A$ and $x\in E$ because $R$ is coherent. We easily deduce that $(0:a)=Ab$. By \cite[Corollary 1.11]{ShWi74} $A$ is coherent and by Theorem \ref{T:G-sr} $A$ is $1$-semiregular. 

Let $0\ne x\in E$. Then $(0:(0,x))=R(a,y)$ for some $a\in A$ and $y\in E$. It follows that $(0:x)=Aa$. Let $M$ be a finitely generated submodule of $E$. Then $M_P$ is cyclic for each maximal ideal $P$. From \cite[Theorem 2.1]{WiWi75} we deduce that $M$ is cyclic. Hence $M$ is finitely presented and $E$ is coherent.

$(2)\Rightarrow (1)$. It is easy to check that $PR_P$ is simple for each maximal ideal $P$ of $R$. It remains to show that $R$ is coherent. By \cite[Corollary 1.4]{Cou15} $R$ is arithmetical. Let $(a,x)\in R$. If $(a,x)(d,z)=0$ with $d\in A$ and $z\in E$ then $ad=0$ and $dx+az=0$. We use the same notations as in Lemma \ref{L:ann}. Then $A'\bar{a}=A'e$ where $e$ is an idempotent of $A'$. We deduce that $\bar{d}\in (0:\bar{a})= A'(1-e)$.  On the other hand $E$ is a module over $A'$. We have $aE\cap dE=0$. Hence $dx=0$ and $az=0$. We get that $d\in (0:a)\cap (0:x)=Ab$ and $z\in (0:_Ea)$. By using Lemma \ref{L:ann} there exists $y\in (0:_Ea)$ such that $(0:_Ea)=Acy+bE$ where $Ac=(0:b)$. We have $d=rb$ for some $r\in A$ and $(d,ry)=(b,y)(r,0)$. On the other hand $z-ry=scy+bw$ where $s\in A$ and $w\in E$. We deduce that $(0,z-ry)=(b,y)(sc,w)$. We conclude that $(0:(a,x))=R(b,y)$. Hence $R$ is coherent by \cite[Corollary 1.11]{ShWi74}. 
\end{proof}

\begin{corollary}\label{C:triv}
Let $A$ be a von Neumann regular ring. Then $A\propto B$ is $1$-semiregular for each ideal $B$.
\end{corollary}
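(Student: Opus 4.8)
The plan is to obtain this as an immediate consequence of Theorem~\ref{T:triv}, so the whole task reduces to verifying that $A$ and $E := B$ satisfy conditions (2)(a)--(d) of that theorem. The case $B = 0$ is degenerate (Theorem~\ref{T:triv} asks for a non-zero module), so I would dispose of it at the outset: then $A \propto B \cong A$, and a von Neumann regular ring is coherent with $PA_P = 0$ for every maximal ideal $P$, hence is $1$-semiregular by Theorem~\ref{T:G-sr}. From now on assume $B \neq 0$.

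First I would record that $A$, being von Neumann regular, is coherent and satisfies $PA_P = 0$ for each maximal ideal $P$, so it is $1$-semiregular by Theorem~\ref{T:G-sr}; this gives (a). Next I would observe that for any prime (equivalently, maximal) ideal $P$ the localization $B_P$ is an ideal of the field $A_P$, hence equals $0$ or $A_P$. Consequently $\mathrm{Supp}(B) = \{\, P : B_P = A_P \,\}$, and for each such $P$ the ring $A_P$ is a field (a von Neumann regular local ring), which yields (b), while $B_P = A_P$ is a simple $A_P$-module, which yields (c).

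The remaining point, and the only one I expect to take a moment's thought, is (d): that $B$ is coherent and FP-injective as an $A$-module. Coherence is easy, since any finitely generated submodule of $B$ is a finitely generated ideal of a von Neumann regular ring, hence generated by an idempotent, hence a direct summand of $A$ and in particular finitely presented. For FP-injectivity I would invoke the standard fact that over a von Neumann regular ring every finitely presented module is projective; therefore $\mathrm{Ext}^1_A(M, B) = 0$ for every finitely presented $A$-module $M$, so $B$ is FP-injective. With (a)--(d) in hand, Theorem~\ref{T:triv} gives at once that $A \propto B$ is $1$-semiregular. I do not anticipate any genuine obstacle; if anything, the FP-injectivity of $B$ is the step most worth spelling out explicitly.
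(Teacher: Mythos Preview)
Your proof is correct and follows exactly the approach intended by the paper: the corollary is stated without proof immediately after Theorem~\ref{T:triv}, so it is meant to be a direct consequence of verifying conditions (2)(a)--(d) for a von Neumann regular ring $A$ and an ideal $B$, precisely as you do. Your separate handling of the degenerate case $B=0$ and your explicit check of FP-injectivity and coherence of $B$ are the natural details one fills in, and none of them hides any obstacle.
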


Let $R$ be a ring such that each $R$-module is F-periodic. It is obvious that $R$ is $1$-semiregular. Clearly, if $R$ is either $1$-quasi-Frobenius or von Neumann regular then each $R$-module is F-periodic. We will give other examples of rings $R$ for which each $R$-module is F-periodic. 

Let $A$ be a ring and $R=A\propto A$. We do as in \cite{PaRo73}. Let $U$ be an $R$-module and $f:U\rightarrow U$ the $A$-homomorphism defined by $f(u)=(0,1)u$ for each $u\in U$. Clearly $f^2=0$. Conversely, if we consider the pair $(U,f)$ where $U$ is an $A$-module and $f$ an $A$-endomorphism of $U$ such that $f^2=0$ then we get that $U$ is an $R$-module by setting $(a,b)u=au+bf(u)$ for any $a,b\in A$ and $u\in U$. Let $(U,f)$ and $(V,g)$ be two $R$-modules. An $A$-homomorphism $\alpha:U\rightarrow V$ is an $R$-homomorphism if and only if $g\alpha=\alpha f$.

The following proposition is an immediate consequence of \cite[Proposition 4.1]{PaRo73}. But in our particular case there is a simple proof.

\begin{proposition} \label{P:Rflat}
Let $A$ be a von Neumann regular ring and $R=A\propto A$. An $R$-module $(U,f)$ is flat if and only if $\ker(f)=\mathrm{Im}(f)$.
\end{proposition}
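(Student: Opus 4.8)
The plan is to exploit the dictionary between $R$-modules and pairs $(U,f)$ with $f^{2}=0$ that was set up just before the statement, reducing flatness of $(U,f)$ over $R = A \propto A$ to a condition phrased entirely inside the von Neumann regular ring $A$. First I would recall the standard criterion: since $A$ is von Neumann regular, every $A$-module is flat, and a short exact sequence of $A$-modules is automatically pure; so flatness of $(U,f)$ as an $R$-module is equivalent to the sequence $0 \to J U \to U \to U/JU \to 0$ being pure, where $J = 0 \propto A$ is the nilpotent ideal of $R$ (note $J^{2}=0$ and $R/J \cong A$). Concretely, $(U,f)$ is flat over $R$ iff for the ideal $J$ we have $J \otimes_{R} U \cong JU$, i.e. the natural surjection $J \otimes_{R} U \to JU$ is an isomorphism; this is the ``local criterion'' for flatness over a ring with square-zero maximal-type ideal.

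Next I would identify both sides of that map in terms of $f$. On the one hand, $JU = (0,1)U = \mathrm{Im}(f)$. On the other hand, $J \cong A$ as an $R$-module where $R$ acts through $R/J \cong A$ (because $J^{2}=0$), so $J \otimes_{R} U \cong A \otimes_{R} U = U/JU = U/\mathrm{Im}(f)$ — again because the $R$-action on $U/JU$ factors through $A$. Under these identifications the natural map $J \otimes_{R} U \to JU$ becomes the map $U/\mathrm{Im}(f) \to \mathrm{Im}(f)$ induced by $f$ (send $u + \mathrm{Im}(f) \mapsto f(u)$, which is well defined since $f^{2}=0$). This map is always surjective; it is injective precisely when $\ker(f) \subseteq \mathrm{Im}(f)$. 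Combined with the inclusion $\mathrm{Im}(f) \subseteq \ker(f)$, which always holds because $f^{2}=0$, injectivity is equivalent to $\ker(f) = \mathrm{Im}(f)$. Hence $(U,f)$ is flat over $R$ iff $\ker(f) = \mathrm{Im}(f)$.

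I expect the only genuinely delicate point to be justifying the reduction ``flat over $R$ $\iff$ $J \otimes_{R} U \to JU$ is injective''. The clean way is to use that $R$ has a complete set of well-understood modules: since $R/J \cong A$ is von Neumann regular, the pair-of-modules description lets one check flatness against finitely generated ideals of $R$, and every finitely generated ideal of $R$ is, modulo the routine structure of $A \propto A$, built from an idempotent-generated ideal of $A$ together with a piece of $J$; the $A$-part contributes nothing to Tor (as $A$ is regular and $U$ is $A$-flat), so the whole obstruction lives in $\mathrm{Tor}_{1}^{R}(R/J, U)$, which is exactly the kernel of $J \otimes_{R} U \to JU$. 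Alternatively one can cite \cite[Proposition 4.1]{PaRo73} directly, as the paper already notes. Everything else is bookkeeping with the identifications $JU = \mathrm{Im}(f)$ and $A \otimes_{R} U = U/\mathrm{Im}(f)$ and the observation that $f$ induces the comparison map.
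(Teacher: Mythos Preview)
Your argument is correct but follows a genuinely different route from the paper. The paper proceeds element-wise: it first notes that $R$ is B\'ezout (by Corollary~\ref{C:triv}, $R=A\propto A$ is $1$-semiregular, hence B\'ezout by Theorem~\ref{T:G-sr}), so flatness of $U$ reduces to the Bourbaki criterion ``$ru=0\Rightarrow u\in(0:r)U$'' for every principal ideal. The forward direction then comes immediately from the single element $r=(0,1)$, while the converse is a hands-on computation: given $(a,a')u=0$, the paper uses the idempotents $e,e'$ generating $(0:a),(0:a')$ in $A$, together with $A$-flatness of $U$ and the hypothesis $\ker(f)=\mathrm{Im}(f)$, to exhibit $u$ explicitly as an element of $(0:(a,a'))U$.

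Your approach instead invokes the local criterion for flatness over the square-zero ideal $J=0\propto A$: since $J^2=0$ and $R/J\cong A$ is von Neumann regular (so $U/JU$ is automatically $A$-flat), flatness of $U$ over $R$ is equivalent to the injectivity of $J\otimes_RU\to JU$. You then identify this map with $\bar f:U/\mathrm{Im}(f)\to\mathrm{Im}(f)$, whose injectivity is exactly $\ker(f)\subseteq\mathrm{Im}(f)$. This is more conceptual and avoids the idempotent bookkeeping entirely; it also makes transparent why only the single Tor group $\mathrm{Tor}_1^R(R/J,U)$ matters. The trade-off is that you are importing the local criterion (standard for nilpotent ideals, but an external input), whereas the paper's proof is self-contained modulo the B\'ezout property already established. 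Your sketch of why the reduction to $\mathrm{Tor}_1^R(R/J,U)$ holds is a bit informal; simply citing the local flatness criterion for nilpotent ideals would be cleaner than the ad hoc description of finitely generated ideals of $R$.
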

\begin{proof}
Since $R$ is B\'ezout, $U$ is flat over $R$ if and only if, for each $r\in R$ the map $Rr\otimes_RU\rightarrow U$, induced by the inclusion $Rr\subseteq R$, is a monomorphism. Every element of $Rr\otimes U$ is of the form $r\otimes u$, where $u\in U$. By \cite[Proposition I.\S 2.13]{Bou61}, $U$ is flat if and only if, for every $u\in U$ and $r\in R$ such that $ru=0$, $u\in (0:r)U$.

Suppose that $(U,f)$ is flat. Let $u\in U$ such that $f(u)=0$. This means that $(0,1)u=0$. Since $(U,f)$ is flat over $R$ and the annihilator of $(0,1)$ is generated by $(0,1)$ we have $u=(0,1)v=f(v)$ for some $v\in U$.

Now assume that $\ker(f)=\mathrm{Im}(f)$. Suppose that $(a,a')u=0$ where $a,a'\in A$ and $u\in U$. Let $e,e'$ the idempotents of $A$ such that $(0:a)=Ae$ and $(0:a')=Ae'$. We have $au+a'f(u)=0$. We deduce that $af(u)=0$. Since $U$ is flat over $A$, $f(u)=ev$ for some $v\in U$. It follows that $f(u)=ef(u)$. On the other hand $a=(1-e)a$. So, we have $(1-e)au+a'ef(u)=0$. Consequently $au=0$ and $a'f(u)=0$. We get that $u=eu$ and $f(u)=e'f(u)=ee'f(u)$. Hence $f(u-ee'u)=0$. We deduce that $u-ee'u=f(w)$ for some $w\in U$. It follows that \[u=eu=ee'u+ef(w)=(ee',0)u+(0,e)w\] with $(a,a')(ee',0)=0$ and $(a,a')(0,e)=0$. Hence $(U,f)$ is flat over $R$.
\end{proof}

\begin{proposition} \label{P:Fperiodic}
Let $A$ and $R$ as in Proposition \ref{P:Rflat}. We assume that the characteristic of $A$ is $2$. Then each $R$-module is F-periodic.
\end{proposition}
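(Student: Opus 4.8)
The plan is to show that a single universal construction makes every $R$-module F-periodic. Recall from the paragraph preceding Proposition~\ref{P:Rflat} that an $R$-module is a pair $(U,f)$ with $f$ an $A$-endomorphism of $U$ satisfying $f^2=0$, that an $A$-homomorphism $\alpha\colon U\rightarrow V$ underlies an $R$-homomorphism $(U,f)\rightarrow (V,g)$ exactly when $g\alpha=\alpha f$, and that by Proposition~\ref{P:Rflat} (this is where $A$ von Neumann regular enters) $(U,f)$ is flat over $R$ if and only if $\ker f=\mathrm{Im}\,f$. So it is enough, for an arbitrary $(U,f)$, to produce a short exact sequence of $R$-modules
\[0\rightarrow (U,f)\xrightarrow{\alpha}(F,h)\xrightarrow{\beta}(U,f)\rightarrow 0\]
with $\ker h=\mathrm{Im}\,h$; such a sequence is then a flat resolution of $(U,f)$ of the periodic shape with $n=1$, i.e.\ exhibits $(U,f)$ as F-periodic.

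The module I would use is $F=U\oplus U$ with $h(u,v)=(v,0)$ — up to swapping the two summands this is the extension of scalars $R\otimes_A U$. Then $h^2=0$, so $(F,h)$ is an $R$-module, and $\mathrm{Im}\,h=U\oplus 0=\ker h$, so $(F,h)$ is flat by Proposition~\ref{P:Rflat}. For the maps I would take $\alpha(u)=(u,f(u))$ and $\beta(u,v)=v-f(u)$; both are plainly $A$-linear. That $\alpha$ is an $R$-homomorphism is the identity $h\alpha(u)=(f(u),0)=(f(u),f^2(u))=\alpha f(u)$. That $\beta$ is an $R$-homomorphism amounts to $f\beta(u,v)=f(v)-f^2(u)=f(v)$ agreeing with $\beta h(u,v)=\beta(v,0)=-f(v)$. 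Finally $\alpha$ is injective (read off the first coordinate), $\beta$ is surjective (since $\beta(0,v)=v$), and $\ker\beta=\{(u,v):v=f(u)\}=\mathrm{Im}\,\alpha$, so the sequence is exact and $(U,f)$ is F-periodic.

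The only step with any content — and the reason the characteristic hypothesis is imposed — is the verification that $\beta$ commutes with the endomorphisms: this requires $f(v)=-f(v)$ for every $v\in U$, which holds precisely because $2=0$ in $A$ and hence $2u=0$ for all $u\in U$. Without this, the naive splitting map is no longer $R$-linear and one would have to choose a genuinely different middle module. I do not expect any further obstacle: the remaining assertions are direct computations with the block data, and von Neumann regularity of $A$ is used only to invoke the flatness criterion of Proposition~\ref{P:Rflat}.
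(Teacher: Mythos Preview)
Your proof is correct and essentially identical to the paper's: both use the same flat middle module $U\oplus U$ with endomorphism $(u,v)\mapsto(v,0)$ and the same injection $u\mapsto(u,f(u))$; your surjection $\beta(u,v)=v-f(u)$ coincides in characteristic $2$ with the paper's $\alpha(u,v)=f(u)+v$. The only cosmetic difference is where the characteristic hypothesis is invoked---you use it to check $R$-linearity of $\beta$, whereas the paper uses it to identify $\ker\alpha$ with the image of the injection---but these are two sides of the same sign.
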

\begin{proof}
Let $(U,f)$ be an $R$-module. We consider $G=U\times U$ and $g:G\rightarrow G$ the homomorphism defined by $g((u,v))=(v,0)$ for each $(u,v)\in G$. It is easy to see that $\ker(g)=\mathrm{Im}(g)$. So, $G$ is flat over $R$ by Proposition \ref{P:Rflat}. Now, we consider $\alpha:G\rightarrow U$ defined by $\alpha((u,v))=f(u)+v$ for each $(u,v)\in G$. It is obvious that $\alpha$ is surjective and we easily check that $f\alpha=\alpha g$. Hence $\alpha$ is an epimorphism over $R$. We have \[\ker(\alpha)=\{(u,-f(u))\mid u\in U\}=\{(u,f(u))\mid u\in U\},\] since $2$ is the characteristic of $A$. Let $\beta:U\rightarrow G$ be the homomorphism defined by $\beta(u)=(u,f(u))$ for each $u\in U$. Clearly $\beta$ is injective and $\beta f=g\beta$. So, $\beta$ is an $R$-monomorphism and the following sequence of $R$-modules  \[0\rightarrow (U,f)\xrightarrow{\beta} (G,g)\xrightarrow{\alpha} (U,f)\rightarrow 0\] is exact. Hence $(U,f)$ is F-periodic.
\end{proof}

\begin{remark}
By \cite[Proposition 4.1]{PaRo73} Proposition \ref{P:Fperiodic} holds even if $A$ is not commutative.
\end{remark}

Let $R$ be a ring and $I$ an ideal of $R$. The {\bf amalgamated duplication of $R$ along $I$}, denoted by $R\bowtie I$, is the subring of $R\times R$ given by
\[R\bowtie I:=\{(r, r + a) \mid r\in R, a\in I\}.\]

\begin{theorem}\label{T:FP}
Let $R$ be a ring and $I$ an ideal. Then $R\bowtie I$ is self FP-injective (resp. semiregular) if and only if $I$ is a pure ideal and $R$ is self FP-injective (resp. semiregular).
\end{theorem}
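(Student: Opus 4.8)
The strategy is to work through the two canonical surjections $p_1,p_2\colon S\to R$, $S:=R\bowtie I$, onto the coordinates, with kernels $J_1=0\propto I$ and $J_2=I\propto 0$. One has $J_1\cap J_2=0$, $K:=J_1+J_2=I\propto I$ with $S/K\cong R/I$, an isomorphism of $S$-modules $R\times R\cong (S/J_1)\oplus(S/J_2)$ given by $p_1\oplus p_2$, and an exact sequence of $S$-modules $0\to S\to R\times R\to R/I\to 0$ whose last map is $(x,y)\mapsto(y-x)+I$. I would first establish the \emph{purity dictionary}: $I$ is pure in $R$ $\iff$ $J_1$ is pure in $S$ $\iff$ $J_2$ is pure in $S$ $\iff$ $K$ is pure in $S$. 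Using the characterization ``$I$ pure $\iff$ for every $a\in I$ there is $b\in I$ with $a=ba$'', the first equivalence comes from computing $\mathrm{Ann}_S(0,a)$ and observing that $\mathrm{Ann}_S(0,a)+J_1=S$ says exactly that; purity of $K$ follows since a sum of pure ideals is pure. I would also record the general fact that for any pure ideal $J$ of a ring $S$ the natural map $S/J\to(1+J)^{-1}S$ is an isomorphism (the kernel of $S\to(1+J)^{-1}S$ is exactly $J$, and every $1+j$ already maps to the identity of $S/J$); hence, once $I$ is known to be pure, $R\cong S/J_1\cong(1+J_1)^{-1}S$ is a localization of $S$.

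For ($\Leftarrow$), suppose $I$ is pure and $R$ is self FP-injective (resp.\ semiregular). By the dictionary $J_1,J_2,K$ are pure, so $S/J_1\cong S/J_2\cong R$ and $S/K\cong R/I$ are flat $S$-modules; thus $R\times R$ is flat over $S$ and the sequence $0\to S\to R\times R\to R/I\to 0$ is pure over $S$ (flat cokernel). In the self FP-injective case, I use the auxiliary fact that for a pure ideal $J$ with $S/J$ self FP-injective, $S/J$ is an FP-injective $S$-module — proved by tensoring a presentation $0\to N\to S^n\to M\to 0$ of a finitely presented $S$-module $M$ ($N$ finitely generated) against the flat module $S/J$ and identifying $\mathrm{Ext}^1_S(M,S/J)$ with $\mathrm{Ext}^1_{S/J}(M/JM,S/J)=0$; then $S/J_1$ and $S/J_2$, hence $R\times R$, are FP-injective over $S$, and as $S$ is a pure submodule of $R\times R$ (and pure submodules of FP-injective modules are FP-injective) $S$ is self FP-injective. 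In the semiregular case, $R\times R$ is semiregular, so every $S$-module $M$ embeds — by purity of $S\hookrightarrow R\times R$ — into $M\otimes_S(R\times R)$, which embeds into a flat $(R\times R)$-module, which is flat over $S$; hence $S$ is semiregular.

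For ($\Rightarrow$), suppose $S$ is self FP-injective (resp.\ semiregular). I test against the finitely presented cyclic $S$-module $S/S(a,a)$ for $a\in I$: in the FP-injective case, $S$ being a pure submodule of $R\times R$ makes $(0:_Ra)\times(0:_Ra)=(0:_{R\times R}(a,a))$ surject onto $(I:_Ra)/I=(0:_{R/I}(a,a))$ via $(x,y)\mapsto(y-x)+I$, and hitting $1+I$ forces $a=ca$ for some $c\in I$; in the semiregular case, $S/S(a,a)$ embeds into a free module $S^k$ (Proposition~\ref{P:semireg}), the generator maps to $(s_1,\dots,s_k)$ with $\bigcap_i\mathrm{Ann}_S(s_i)=S(a,a)$, one checks $(0,a)\in\bigcap_i\mathrm{Ann}_S(s_i)=S(a,a)$, and writing $(0,a)=(t,t+u)(a,a)$ gives $a=ua$ with $u\in I$. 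Either way $I$ is pure, so by the localization fact above $R\cong(1+J_1)^{-1}S$, and $R$ inherits self FP-injectivity (resp.\ semiregularity) from $S$ because these properties pass to localizations (for semiregularity, cf.\ Proposition~\ref{P:locS}).

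The substance of the argument — and the step I expect to be the crux — is recognizing the quotient $S/J_1$ by the (generally non-principal) pure ideal $J_1$ as a localization $(1+J_1)^{-1}S$; this is what transports self FP-injectivity and semiregularity from $R\bowtie I$ down to $R$. Everything else is the purity dictionary plus careful bookkeeping of flatness, purity and finite presentation over $S$ versus over $R$; the likeliest source of error lies in these change-of-rings and localization-of-properties steps rather than in any single hard computation.
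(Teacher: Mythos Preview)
Your argument is correct, and it proceeds along a recognisably different route from the paper's. The paper never invokes the identification $R\cong (1+J_1)^{-1}S$; instead, for the direction $(\Rightarrow)$ it views $S=R\bowtie I$ as an $R$-module via the diagonal $r\mapsto (r,r)$, observes $S\cong R\oplus I$ so that $S$ is flat over $R$ once $I$ is known to be pure, transfers FP-injectivity of $S$ down to $R$ via ``$R$ is a direct $R$-summand of $S$'', and handles coherence by faithful flatness of $S$ over $R$. For $(\Leftarrow)$ the paper, like you, shows $J_1$ and $J_2$ are pure and that $R\times R$ is flat over $S$, but then establishes self FP-injectivity of $S$ by an explicit linear-algebra computation: given a finite system over $S$ solvable in $R\times R$, it produces a solution in $S$ by hand (splitting into the two coordinates and using purity of $I$ to push the difference into $I$), and again gets coherence by faithful-flat descent from $R\times R$.

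Your approach is more categorical: the change-of-rings identification $\mathrm{Ext}^1_S(M,S/J)\cong\mathrm{Ext}^1_{S/J}(M/JM,S/J)$ for $J$ pure lets you avoid the paper's explicit equation-solving in $(\Leftarrow)$, and the localization trick $S/J_1\cong(1+J_1)^{-1}S$ gives a clean transfer of both properties back to $R$ in $(\Rightarrow)$. The paper's approach has the virtue of being entirely elementary and self-contained (no Ext or spectral reasoning), while yours makes the structural reason --- $R$ is simultaneously a flat quotient and a localization of $S$ --- fully transparent. One small remark: your citation of Proposition~\ref{P:locS} for the descent of \emph{semiregularity} to localizations is not quite on target (that proposition is about $n$-semiregularity), but the needed fact is standard and follows immediately from the equivalence semiregular $=$ coherent $+$ self FP-injective, both of which localize.
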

\begin{proof}
We use the fact that a commutative ring is semiregular if and only if it is coherent and self FP-injective (see \cite[Proposition 3.3]{Mat85}).

Assume that $R\bowtie I$ is self FP-injective. 

Consider the equation $rx=a$ where $r\in R$ and $a\in I$. Suppose that there is a solution in $R$. Then the equation $(r,r)X=(0,a)$ has a solution in $R\times R$. So, there is a solution $(s,s+b)$ in $R\bowtie I$ and it follows that $rs=0$ and $rb=a$.

Since $R\bowtie I$ is flat over $R$, it is FP-injective over $R$. Then $R$ is self FP-injective ($R$ is isomorphic to a direct $R$-summand of $R\bowtie I$). It follows that $R\bowtie I$ is faithfully flat over $R$. So, if $R\bowtie I$ is coherent then $R$ is also coherent.

Conversely we show that $0\times I$ is a pure ideal of $R\bowtie I$. Consider the equation $(r,r+a)X=(0,b)$ where $r\in R$ and $a,\ b\in I$ and suppose there is a solution $(x,x+y)$ in $R\bowtie I$. Then $rx=0$ and the equation $(r+a)Y=b$ has a solution  in $R$. Since there is a solution $z\in I$ it follows that $(0,z)$ is a solution of $(r,r+a)X=(0,b)$. Hence $R\times 0\cong R\bowtie I/0\times I$ is flat over $R\bowtie I$. Every element $(r,r+a)$ of $R\bowtie I$ can be written $((r+a)-a,(r+a))$. In the same way we show that $I\times 0$ is a pure ideal of $R\bowtie I$ and $0\times R\cong R\bowtie I/I\times 0$ is flat over $R\bowtie I$. We successively conclude that $R\times R$ is flat and FP-injective over $R\bowtie I$. Now consider the following system of equations:
\begin{equation}\label{eq}
\sum_{1\leq i\leq n}(r_{i,j},r_{i,j}+a_{i,j})X_i=(s_j,s_j+t_j),\ 1\leq j\leq p,\ r_{i,j},s_j\in R,\ a_{i,j},t_j\in I.
\end{equation}
Suppose there is a solution $(x_i,y_i)_{1\leq i\leq n}$ in $R\times R$. Then we have:
\begin{equation}
\sum_{1\leq i\leq n}r_{i,j}x_i=s_j\ \mathrm{and}\ \sum_{1\leq i\leq n}(r_{i,j}+a_{i,j})y_i=s_j+t_j,\ 1\leq j\leq p. 
\end{equation}
We deduce that 
\begin{equation}
\sum_{1\leq i\leq n}(r_{i,j}+a_{i,j})(y_i-x_i)=t_j-\sum_{1\leq i\leq n}a_{i,j}x_i,\ 1\leq j\leq p. 
\end{equation}
This last system of equations has a solution $(z_i)_{1\leq i\leq n}$ in $I$. We easily check that $(x_i,x_i+z_i)_{1\leq i\leq n}$ is solution of (\ref{eq}). It follows that $R\bowtie I$ is self FP-injective. We get that $R\times R$ is faithfully flat over $R\bowtie I$. Hence $R\bowtie I$ is coherent if $R$ is coherent.
\end{proof}

\begin{theorem}
Let $A$ be a ring, $I$ an ideal and $R=A\bowtie I$. Then $R$ is $1$-semiregular if and only if $I$ is a pure ideal and $A$ is $1$-semiregular.
\end{theorem}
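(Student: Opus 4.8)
The plan is to run both implications through Theorem \ref{T:G-sr}, using its characterization $(3)$: a ring is $1$-semiregular precisely when it is coherent and $QR_Q$ is zero or simple for every maximal ideal $Q$. The coherence (in fact semiregularity) half will be supplied by Theorem \ref{T:FP} with $A$ as the base ring, while the local half will be obtained by a short descent through the pure ideals $0\times I$ and $I\times 0$ of $R:=A\bowtie I$. As already observed inside the proof of Theorem \ref{T:FP}, when $I$ is pure both of these are pure ideals of $R$, they satisfy $(0\times I)(I\times 0)=0$, and $R/(0\times I)\cong A\cong R/(I\times 0)$; I take these facts as given.

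First I would dispatch the easy direction. Suppose $R$ is $1$-semiregular. Then $R$ is semiregular by Corollary \ref{C:edr}, so Theorem \ref{T:FP} gives that $I$ is pure and $A$ is semiregular. Purity of $I$ makes $0\times I$ a pure ideal of $R$, so by Proposition \ref{P:locS}$(2)$ the quotient $R/(0\times I)\cong A$ is $1$-semiregular. No computation is needed here.

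For the converse, assume $I$ is pure and $A$ is $1$-semiregular. Then $A$ is semiregular (Corollary \ref{C:edr}), so Theorem \ref{T:FP} shows that $R$ is semiregular, in particular coherent. It remains to check the local condition of Theorem \ref{T:G-sr}$(3)$ at an arbitrary maximal ideal $Q$ of $R$. From $(0\times I)(I\times 0)=0\subseteq Q$ and primeness, $Q$ contains $0\times I$ or $I\times 0$; by the symmetry of the construction I may assume $0\times I\subseteq Q$. A pure ideal contained in a prime localizes to zero, so $R_Q\cong (R/(0\times I))_Q$; transporting along $R/(0\times I)\cong A$, the maximal ideal $Q/(0\times I)$ corresponds to a maximal ideal $P$ of $A$, and one reads off $R_Q\cong A_P$ with $QR_Q$ matching $PA_P$. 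Since $A$ is $1$-semiregular, $PA_P$ is zero or simple, hence so is $QR_Q$; Theorem \ref{T:G-sr} then yields that $R$ is $1$-semiregular.

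All the steps are short; the only points that need care are formal: checking that purity of $I$ in $A$ passes to $0\times I$ and $I\times 0$ in $R$ (essentially the computation already made in the proof of Theorem \ref{T:FP}), that a pure ideal inside a prime localizes to zero, and that localizing $R/(0\times I)$ at $Q$ coincides with localizing $A$ at the corresponding maximal ideal. The one thing to watch is the direction in which Theorem \ref{T:FP} is invoked: to get coherence of $R$ in the converse one needs $A$ semiregular, which is exactly what $1$-semiregularity of $A$ plus Corollary \ref{C:edr} provides.
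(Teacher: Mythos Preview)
Your proof is correct and follows essentially the same strategy as the paper: both directions hinge on Theorem \ref{T:FP} for the (semi)regularity/purity part and on identifying $R_Q$ with a localization of $A$ for the local condition of Theorem \ref{T:G-sr}(3). The only difference is cosmetic: for the converse, the paper pulls $Q$ back along the diagonal $\phi:A\to R$, $\phi(a)=(a,a)$, and uses the pure dichotomy $I_L\in\{0,A_L\}$ to see that $R_Q\cong A_L$, whereas you reach the same identification by observing $(0\times I)(I\times 0)=0$, so one of these pure ideals lies in $Q$ and vanishes upon localization. Both routes give $R_Q\cong A_P$ for a suitable maximal $P$ of $A$, and the conclusion follows identically.
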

\begin{proof} If $R$ is $1$-semiregular then it is semiregular by Corollary \ref{C:edr}. So, $I$ is a pure ideal of $A$ and $0\times I$ a pure ideal of $R$ by Theorem \ref{T:FP} and its proof. Consequently $A$ is $1$-semiregular by Corollary \ref{C:edr}.

Conversely, $R$ is semiregular by Theorem \ref{T:FP}. Let $P$ be a maximal ideal of $R$ and $L$ the inverse image of $P$ by the homomorphism $\phi:A\rightarrow R$ defined by $\phi(r)=(r,r)$. Then $R_P=(R_L)_{P_L}$. If $I_L=0$ then $R_L=R_P=A_L$. Else $I_L=A_L$ and $R_L=A_L\times A_L$. It follows that $R_P=A_L$. Hence $PR_P$ is a simple module for each maximal ideal $P$ of $R$.
\end{proof}

\section{$2$-semiregular rings}
Now we turn our attention to $2$-semiregular rings.  We will give a characterization of such rings. To this end we give some preparatory results.

\begin{proposition}
\label{P:2sr} Let $R$ be a semiregular ring. Then $R$ is $2$-semiregular if $R$ is an elementary divisor ring.
\end{proposition}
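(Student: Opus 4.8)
The plan is to use the elementary divisor hypothesis to reduce to cyclic modules, and then to write down an explicit $2$-periodic free resolution of each finitely presented cyclic module, using semiregularity only to normalise annihilators.

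First I would invoke the decomposition theorem: since $R$ is an elementary divisor ring, every finitely presented $R$-module $M$ is a direct sum $\bigoplus_{i=1}^{n} R/Ra_i$ of finitely presented cyclic modules, by \cite[(3.8) Theorem]{LLS74} (exactly as in the proof of Theorem \ref{T:G-sr}). A finite direct sum of $2$-periodic modules is again $2$-periodic, since one may take the direct sum of the respective length-two periodic resolutions. Hence it suffices to prove that $R/Ra$ is $2$-periodic for every $a\in R$.

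Next I would set up the relevant annihilators. Being semiregular, $R$ is coherent by Proposition \ref{P:semireg}, so $(0:a)$ is a finitely generated ideal; since an elementary divisor ring is in particular a B\'ezout ring, we may write $(0:a)=Rb$ for some $b\in R$. Applying the identity $(0:(0:I))=I$ of Proposition \ref{P:semireg}(4) to the finitely generated ideal $I=Ra$ gives $(0:b)=(0:(0:a))=Ra$; in particular $ab=0$. With such $a$ and $b$, I claim that
\[0\longrightarrow R/Ra \xrightarrow{\ \cdot b\ } R \xrightarrow{\ \cdot a\ } R \xrightarrow{\ \pi\ } R/Ra \longrightarrow 0\]
is exact, where $\pi$ is the canonical surjection and $\cdot b$ denotes the map $\bar{x}\mapsto bx$, which is well defined because $ab=0$. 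Indeed, exactness at the right-hand $R$ is the equality $\mathrm{im}(\cdot a)=Ra=\ker\pi$; exactness at the left-hand $R$ is $\ker(\cdot a)=(0:a)=Rb=\mathrm{im}(\cdot b)$; and $\cdot b$ is injective since its kernel is $(0:b)/Ra=Ra/Ra=0$. This exhibits $R/Ra$ as a $2$-periodic module (with free terms of rank one), which combined with the reduction above completes the proof.

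I do not expect a real obstacle here: the whole argument is short once the LLS decomposition is available. The only place where the hypotheses are used beyond "elementary divisor ring" is the passage $(0:(0:a))=Ra$, which is precisely the content of semiregularity (Proposition \ref{P:semireg}(4)) and which is what makes the four-term sequence symmetric in $a$ and $b$; the point worth double-checking is merely that \cite[(3.8) Theorem]{LLS74} applies in this generality, and it does, as it is already used in Section 2.
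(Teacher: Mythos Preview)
Your argument is correct and follows essentially the same route as the paper: reduce to cyclic modules via \cite[(3.8) Theorem]{LLS74}, then for $R/Ra$ choose $b$ with $(0:a)=Rb$ and use semiregularity to get $(0:b)=Ra$, yielding the exact sequence $0\to R/Ra\xrightarrow{\cdot b} R\xrightarrow{\cdot a} R\to R/Ra\to 0$. The paper phrases the last step as ``$Rb\cong R/Ra$'' rather than invoking Proposition~\ref{P:semireg}(4) explicitly, but this is the same computation.
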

\begin{proof}
Since $R$ is B\'ezout and semiregular, then for each $a\in R$ there exists $b\in R$ such that $Rb=(0:a)$ and $Rb\cong R/Ra$. So, if $\tilde{a}$ is the multiplication by $a$ in $R$ we have the following exact sequence:
\[0\rightarrow R/aR\rightarrow R\xrightarrow{\tilde{a}} R\rightarrow R/aR\rightarrow 0.\]
Now we conclude by using the fact that each finitely presented $R$-module is a direct sum of cyclic modules because $R$ is an elementary divisor ring.
\end{proof}

The following lemma generalizes \cite[Lemma 2.3]{BeHuWa15}.

\begin{lemma}\label{L:2F} Let $R$ be a coherent ring and $W$a $2$-F-periodic module. Then there exists an exact sequence $0\rightarrow W\rightarrow G\rightarrow F\rightarrow W\rightarrow 0,$ such that:
\begin{enumerate}
\item $F$ is free and $G$ is flat;
\item $F$ is finitely generated if $W$is finitely generated, and they have the same minimal number of generators;
\item $G$ is projective and finitely generated if $W$is finitely presented.
\end{enumerate} 
\end{lemma}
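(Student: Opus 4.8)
The plan is to begin from a flat resolution witnessing that $W$ is $2$-F-periodic, that is, an exact sequence $0\to W\xrightarrow{\iota}F_{2}\xrightarrow{g}F_{1}\xrightarrow{h}W\to 0$ with $F_{1}$ and $F_{2}$ flat, and to replace the middle term $F_{1}$ by a free module without spoiling exactness. First I would choose a free module $F$ together with an epimorphism $p\colon F\twoheadrightarrow W$, taking $F$ free on a minimal generating set of $W$ when $W$ is finitely generated (so that $F$ and $W$ have the same minimal number of generators). Put $N=\ker p$ and $K=\ker h=\operatorname{im}g$. Since $F$ is projective and $h$ is surjective, lift $p$ through $h$ to obtain $\widetilde{p}\colon F\to F_{1}$ with $h\widetilde{p}=p$, and form $\phi=(g,\widetilde{p})\colon F_{2}\oplus F\to F_{1}$, $(x,y)\mapsto g(x)+\widetilde{p}(y)$. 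This $\phi$ is onto: its image contains $K=\ker h$ and maps onto $W$ under $h$, hence is all of $F_{1}$. Set $G=\ker\phi$, so $0\to G\to F_{2}\oplus F\xrightarrow{\phi}F_{1}\to 0$ is exact.

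Next I would check the two properties of $G$. For flatness: since $F_{2}\oplus F$ and $F_{1}$ are both flat, the long exact $\operatorname{Tor}$-sequence of $0\to G\to F_{2}\oplus F\to F_{1}\to 0$ places $\operatorname{Tor}_{1}^{R}(G,-)$ between $\operatorname{Tor}_{2}^{R}(F_{1},-)=0$ and $\operatorname{Tor}_{1}^{R}(F_{2}\oplus F,-)=0$, so $\operatorname{Tor}_{1}^{R}(G,-)=0$ and $G$ is flat. A short diagram chase then identifies $G$: the maps $w\mapsto(\iota(w),0)$ and $(x,y)\mapsto y$ yield an exact sequence $0\to W\to G\to N\to 0$. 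Indeed, if $(x,y)\in G$ then $\widetilde{p}(y)=-g(x)\in\ker h$, so $p(y)=h\widetilde{p}(y)=0$, i.e.\ $y\in N$; conversely, for $y\in N$ one has $\widetilde{p}(y)\in\ker h=g(F_{2})$, so $\widetilde{p}(y)=-g(x)$ for some $x$ and $(x,y)\in G$ maps to $y$; finally the kernel of $(x,y)\mapsto y$ on $G$ is $\{(x,0)\mid x\in\ker g\}=\iota(W)\times\{0\}$. Splicing $0\to W\to G\to N\to 0$ with $0\to N\to F\xrightarrow{p}W\to 0$ gives the desired sequence $0\to W\to G\to F\to W\to 0$ with $G$ flat and $F$ free; this settles (1), and (2) is immediate from the choice of $F$.

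For (3), assume $W$ is finitely presented. Then, $F$ being finitely generated free, the submodule $N=\ker p$ is finitely generated, and since $R$ is coherent and $F$ is finitely presented, $N$ is finitely presented. Hence in the extension $0\to W\to G\to N\to 0$ both outer terms are finitely presented, so $G$ is finitely presented; being also flat, $G$ is finitely generated projective. This is the only point at which coherence of $R$ is really needed.

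I expect the calculations to be routine once $\phi$ is written down; the one step that needs a moment's thought is the flatness of $G$: although $G$ is defined merely as a submodule of the flat module $F_{2}\oplus F$, it is flat because the cokernel $F_{1}$ is flat (equivalently $\operatorname{Tor}_{2}^{R}(F_{1},-)=0$). Beyond this and the bookkeeping with the two spliced short exact sequences I do not foresee any obstacle.
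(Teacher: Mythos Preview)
Your argument is correct and is essentially the same as the paper's proof: both lift the free cover $F\to W$ through the flat resolution, set $G=\ker\bigl((g,\widetilde p)\colon F_2\oplus F\to F_1\bigr)$, read off $0\to W\to G\to N\to 0$ from the resulting diagram, and splice. The only cosmetic difference is that the paper deduces flatness of $G$ by observing it is a pure submodule of $F_2\oplus F$ (the cokernel $F_1$ being flat), whereas you use the equivalent long exact $\operatorname{Tor}$ sequence.
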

\begin{proof} There exists an sequence $0\rightarrow W\rightarrow F_1\xrightarrow{\phi} F_0\xrightarrow{\pi} W\rightarrow 0$, where $F_1$ and $F_0$ are flat $R$-modules. Let $\alpha:F\rightarrow W$ be an epimorphism where $F$ is a free $R$-module (of rank equal to the minimal number of generators of $W$ if it is finitely generated), $L=\ker(\alpha)$ and $K=\ker(\pi)$. There exists a homomorphism $\beta:F\rightarrow F_0$ such that $\alpha=\pi\beta$. Let $\gamma:F_1\oplus F\rightarrow F_0$ be the linear map defined by $\gamma((x,y))=\phi(x)+\beta(y)$ where $x\in F_1$ and $y\in F$ and let $G=\ker(\gamma)$. We easily check that $\gamma$ is surjective. As $F_0$ is flat, $G$ is a pure submodule of $F_1\oplus F$. So, $G$ is flat. We get the following commutative diagram with exact rows and exact columns:

\begin{equation}\label{E:diag}
\begin{matrix}
 {} & {} & 0 & {} & 0 & {} & 0 & {} & {}\\
{} & {} & \downarrow & {} & \downarrow & {} & \downarrow & {} & {} \\
0 & \rightarrow & W & \rightarrow & G & \rightarrow & L & \rightarrow & 0 \\
 {} & {} & \downarrow & {} & \downarrow & {} & \downarrow & {} & {} \\
0 & \rightarrow & F_1 & \rightarrow & F_1\oplus F & \rightarrow & F & \rightarrow & 0 \\
{} & {} & {\scriptstyle{\phi}}\downarrow & {} &{\scriptstyle{\gamma}} \downarrow & {} & {\scriptstyle{\alpha}} \downarrow & {} & {} \\
0 & \rightarrow & K & \rightarrow & F_0 & \xrightarrow{\pi} & W & \rightarrow & 0 \\
{} & {} & \downarrow & {} & \downarrow & {} & \downarrow & {} & {} \\
{} & {} & 0 & {} & 0 & {} & 0 & {} & {} 
\end{matrix}
\end{equation}

Now, we combine the top horizontal sequence with the right vertical one to get the desired one. Whenever $W$ is finitely presented, we choose $F$ of finite rank. Since $R$ is coherent we have that $L$ is finitely presented. It follows that $G$ is finitely presented, whence it is projective. 
\end{proof}

As a consequence we get:

\begin{corollary}\label{C:2F} Let $R$ be a ring. Then $R$ is $2$-semiregular if each $R$-module is 
$2$-F-periodic. \end{corollary}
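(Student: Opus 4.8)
The plan is to reduce the statement entirely to Lemma \ref{L:2F}. That lemma has a single hypothesis, namely that $R$ be coherent, so the first task is to extract coherence from the assumption that every $R$-module is $2$-F-periodic. This is immediate: if $M$ is any $R$-module and $0\to M\to F_2\to F_1\to M\to 0$ is a $2$-F-periodic presentation with $F_1,F_2$ flat, then the leftmost arrow embeds $M$ into the flat module $F_2$. Hence every $R$-module embeds in a flat module, which is precisely the definition of a semiregular ring; in particular $R$ is coherent by Proposition \ref{P:semireg}.

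Next I would take an arbitrary finitely presented $R$-module $W$. By hypothesis $W$ is $2$-F-periodic, and since $R$ is coherent we may apply Lemma \ref{L:2F}(3): there is an exact sequence $0\to W\to G\to F\to W\to 0$ in which $G$ is projective and finitely generated and $F$ is free. This is exactly a projective resolution of $W$ of the periodic shape $0\to W\to P_2\to P_1\to W\to 0$, so $W$ is $2$-periodic. As $W$ was an arbitrary finitely presented module, $R$ is $2$-semiregular.

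I do not expect a genuine obstacle here. The substantive content — producing a four-term exact sequence whose middle flat module can be replaced by a projective one when the end term is finitely presented — is already carried out in Lemma \ref{L:2F}, and the passage from ``every module is $2$-F-periodic'' to ``$R$ is coherent'' is a one-line observation about the built-in flat embedding. The only point that deserves a moment's care is the choice of part \emph{(3)} of Lemma \ref{L:2F} rather than part \emph{(1)}: it is precisely the finitely-presented refinement that upgrades the middle term from flat to projective and thereby turns the exact sequence into a bona fide periodic projective resolution.
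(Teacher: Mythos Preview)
Your argument is correct and matches the paper's intended route: the corollary is stated there without proof as an immediate consequence of Lemma~\ref{L:2F}, and your explicit check that $R$ is semiregular (hence coherent) via Proposition~\ref{P:semireg} supplies the one hypothesis the paper leaves tacit. A tiny cosmetic remark: the freeness of $F$ comes from part~(1) of the lemma while part~(3) upgrades $G$ from flat to projective, so strictly speaking you are invoking both parts, not part~(3) alone.
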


We will show at the end of the paper that there is  a $2$-semiregular ring with a non 
$2$-F-periodic module.\\

We define the {\bf Fitting invariants} $F_i(A)$ of an $m\times n$ matrix $A$ as follows: $F_i(A)$ is the ideal of $R$ generated by the $(m-i)\times (m-i)$ minors of $A$, for $0\leq i< m$ and $F_i(A)=R$ for $i\geq m$. If $A$ names the module $M$, we put $F_i(M)=F_i(A)$. The following well-known proposition will be used later:

\begin{proposition}\label{P:Fitt}\textnormal{\cite[Proposition 20.8]{Eis95}}
Let $M$ be a finitely presented $R$-module. Then $M$ is a projective module of constant rank $k$ if and only if $F_{k-1}(M)=0$ and $F_k(M)=R$.
\end{proposition}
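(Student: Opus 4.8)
The plan is to reduce the statement to the local case using the two standard properties of Fitting invariants recorded in \cite{Eis95}: that $F_i(M)$ does not depend on the chosen finite presentation, and that Fitting invariants commute with arbitrary base change, so that $(F_i(M))_{\mathfrak p}=F_i(M_{\mathfrak p})$ for every prime ideal $\mathfrak p$. Granting these, the equivalence is local in the following precise sense. On the one hand, a finitely presented module $M$ is projective of constant rank $k$ if and only if $M_{\mathfrak p}$ is free of rank $k$ over $R_{\mathfrak p}$ for every maximal ideal $\mathfrak p$; here one uses that a finitely presented, locally free module is projective, and that for a locally free module the rank at a prime agrees with the rank at any maximal ideal containing it, so ``constant rank $k$'' is detected at maximal ideals. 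On the other hand, $F_{k-1}(M)=0$ if and only if $(F_{k-1}(M))_{\mathfrak p}=0$ for all maximal $\mathfrak p$, and $F_k(M)=R$ if and only if $(F_k(M))_{\mathfrak p}=R_{\mathfrak p}$ for all maximal $\mathfrak p$. Hence it suffices to prove the proposition when $R$ is local.

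So let $(R,\mathfrak m)$ be local and $M$ finitely presented. I would choose a \emph{minimal} presentation $R^n\xrightarrow{A}R^m\to M\to 0$, that is, one in which $m$ equals the minimal number of generators of $M$; a Nakayama argument then shows that every entry of $A$ lies in $\mathfrak m$ (if a coordinate of some relation were a unit, that relation would be part of a basis of $R^m$ and $M$ would be generated by fewer than $m$ elements). If $M\cong R^k$, one may take $m=k$ and $n=0$, so $A$ has no columns; by the defining conventions $F_i(M)=0$ for $i<k$ (the ideal generated by the empty set of minors) and $F_k(M)=R$, giving one implication.

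For the converse, assume $F_{k-1}(M)=0$ and $F_k(M)=R$. Since every entry of $A$ lies in $\mathfrak m$, so does every minor of $A$; hence $F_k(M)=R$ forces $k\geq m$ (otherwise $F_k(A)$ would be generated by $(m-k)\times(m-k)$ minors with $m-k\geq 1$, hence contained in $\mathfrak m\subsetneq R$). Likewise $F_{k-1}(M)=0$ together with the convention $F_i(A)=R$ for $i\geq m$ forces $k-1<m$, i.e. $k\leq m$ (recall a local ring is nonzero). Therefore $m=k$, and now $F_{k-1}(M)=F_{k-1}(A)$ is generated by the $1\times1$ minors of $A$, namely its entries; so $F_{k-1}(M)=0$ means $A=0$, whence $M\cong R^k$ is free of rank $k$.

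I do not expect a serious obstacle: the mathematical content is the minimal-presentation computation over a local ring, and the only points needing care are the bookkeeping of the index conventions (in particular the degenerate case $k=0$, where $F_{-1}$ is to be read as $0$) and the compatibility of the Fitting construction with localization, which I would simply cite from \cite{Eis95} rather than reprove.
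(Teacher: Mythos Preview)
Your argument is correct and is essentially the standard proof one finds in \cite{Eis95}: reduce to the local case via the base-change compatibility of Fitting ideals, take a minimal presentation so that all entries of the presenting matrix lie in the maximal ideal, and then read off $m=k$ and $A=0$ from the two conditions. Note, however, that the paper does not supply its own proof of this proposition at all; it is simply quoted from \cite[Proposition~20.8]{Eis95} and used as a black box, so there is nothing to compare against beyond observing that your write-up reproduces the cited source's reasoning.
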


\begin{lemma}\label{L:proj}
Let $R$ be an arithmetical ring and $r\in R$. Assume that $(0:r)=Ra+Rb$. Then there exists an exact sequence $0\rightarrow R/Rr\xrightarrow{\alpha} G\xrightarrow{\pi} Rr\rightarrow 0$ where $G$ is a projective module of rank $1$ if and only if there exist $u,v\in R$ and $s,t\in (0:r)$ such that $bu=av,\ su=a(1+t)$ and $sv=b(1+t)$.
\end{lemma}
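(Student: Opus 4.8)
The plan is to realise the middle term $G$ explicitly as the cokernel of a small matrix built from $r,a,b$ and the parameters, and to control projectivity of rank $1$ through the Fitting-invariant criterion of Proposition~\ref{P:Fitt}; the hypothesis that $R$ is arithmetical enters by localising at maximal ideals, where $R_P$ is a valuation ring (so matrices diagonalise and finitely generated ideals are principal). Two preliminaries are worth isolating. First, since $(0:r)=Ra+Rb$ is finitely generated, $Rr\cong R/(0:r)$ is finitely presented, hence so is $R/Rr$ (from $0\to Rr\to R\to R/Rr\to 0$), and therefore every $G$ occurring in an exact sequence $0\to R/Rr\to G\to Rr\to 0$ is finitely presented with well-defined Fitting invariants. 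Second, in any such sequence $\ker\pi=\alpha(R/Rr)$ is cyclic and $G/\ker\pi\cong Rr$ is cyclic, so $G$ is generated by two elements; combining the free presentations $0\to(0:r)\to R\xrightarrow{\cdot r}Rr\to 0$ and $R\xrightarrow{\cdot r}R\to R/Rr\to 0$ with the pushout description of an extension realises $G$ as the cokernel of the $2\times3$ matrix with columns $(0,r)$, $(a,-p)$, $(b,-q)$ for suitable $p,q\in R$ (the only constraint being that the induced map $(0:r)\to R/Rr$, $a\mapsto\bar p$, $b\mapsto\bar q$, is well defined). For this presentation $F_0(G)$ is generated by the $2\times2$ minors $-ra$, $-rb$, $bp-aq$ (the first two vanishing because $a,b\in(0:r)$) and $F_1(G)$ by the entries $a,b,r,p,q$, so by Proposition~\ref{P:Fitt} the module $G$ is projective of rank $1$ exactly when $bp-aq=0$ and $Ra+Rb+Rr+Rp+Rq=R$.

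For the implication $(\Leftarrow)$: from $u,v\in R$ and $s,t\in(0:r)$ with $bu=av$, $su=a(1+t)$, $sv=b(1+t)$, one produces a presentation of a candidate $G$ from these data (concretely a choice of $p,q$ as above, built from $r,a,b,s,t,u,v$), together with $\alpha:R/Rr\to G$ and $\pi:G\to Rr$ read off the generators, and checks: the induced map $(0:r)\to R/Rr$ is well defined and $\alpha$ is injective, where $rs=rt=0$ and $bu=av$ are exactly what is needed; $\pi$ is surjective with $\ker\pi=\alpha(R/Rr)$, which is immediate because $(0:r)=Ra+Rb$ acts through the displayed relations; and $G$ is projective of rank $1$, since $bu=av$ together with $ra=rb=0$ kills the $2\times2$ minors while $su=a(1+t)$, $sv=b(1+t)$ provide the required unit combination of the entries so that $F_1(G)=R$. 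It is cleanest to verify the last point locally: after localising at a maximal ideal $P$ the valuation ring $R_P$ makes one of $a,b$ divide the other (and forces $a=b=0$ in $R_P$ when $r$ is a unit there), and in each case the three identities collapse to the divisibility facts showing $G_P$ is free of rank $1$; exactness localises as well.

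For $(\Rightarrow)$: given the sequence with $G$ projective of rank $1$, writing $G$ in the above form gives $p,q\in R$ with $bp-aq=0$ and $Ra+Rb+Rr+Rp+Rq=R$ (and the well-definedness constraint on $(p,q)$), and it remains to convert this into the asserted $u,v\in R$, $s,t\in(0:r)$. I would localise at each maximal ideal $P$: where $r$ is a unit in $R_P$ everything is trivial, and otherwise $R_P$ is a valuation ring in which $a\mid b$ or $b\mid a$, so the relations $bp=aq$ and $R_P=(a,b,r,p,q)R_P$ pin down local choices of the four parameters; the correction factor $1+t$ appears because $t\in(0:r)$ necessarily lies in every maximal ideal at which $r$ is a non-zero-divisor, so $1+t$ is a unit there. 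The delicate point, and the main obstacle, is the globalisation: assembling the local data into single elements $u,v,s,t$ of $R$ satisfying all three identities simultaneously with $s,t\in(0:r)$, rather than only after localisation. This is handled by a partition-of-unity argument using that $R$ is arithmetical (finitely generated ideals are locally principal) and that $(0:r)$ is finitely generated, the element $t$ being precisely what absorbs the mismatch between the local solutions.
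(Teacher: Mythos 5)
Your skeleton matches the paper's: present any middle term $G$ on the two generators $g=\alpha(1+Rr)$ and a preimage $h$ of $r$ by the $2\times 3$ matrix $\binom{r\ u\ v}{0\ a\ b}$ (your columns $(0,r),(a,-p),(b,-q)$), invoke Proposition \ref{P:Fitt} so that projectivity of rank $1$ becomes $F_0(G)=0$ and $F_1(G)=R$, i.e. $av=bu$ and $Rr+Ra+Rb+Ru+Rv=R$, and, for the ``if'' direction, verify $F_1(G)_P=R_P$ by cases over the valuation rings $R_P$ (on $V(r)\cap V((0:r))$ the element $1+t$ is a unit, so $(0:r)_P=R_Ps$ and one of $u,v$ is forced to be a unit). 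That half of your sketch is essentially the paper's argument in outline.

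The genuine gap is in your ``only if'' direction. You correctly reduce it to: given $av=bu$ and $Rr+Ra+Rb+Ru+Rv=R$, produce $s,t\in(0:r)$ with $su=a(1+t)$ and $sv=b(1+t)$. You propose to solve this locally at each maximal ideal and then glue ``by a partition-of-unity argument'', and you yourself flag this globalisation as the main obstacle --- but it is never carried out, and it cannot be carried out in the form described: an arbitrary arithmetical ring has infinitely many maximal ideals, no partition of unity, and local solutions $(s_P,t_P)$ of these equations have no reason to patch into single elements of $(0:r)$; the peculiar shape of the lemma's condition is precisely the global certificate that replaces such patching, so appealing to patching to produce it is circular in difficulty. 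In fact no local-to-global machinery is needed: from $F_1(G)=R$ write $1=r'r+a'a+b'b+u'u+v'v$; then $\binom{b\ -a}{u'\ v'}\binom{u}{v}=\binom{0}{1-r'r-a'a-b'b}$ by $bu=av$, and multiplying on the left by $\binom{v'\ a}{-u'\ b}$ (whose product with the first matrix is $(au'+bv')$ times the identity) yields $(au'+bv')u=a(1-r'r-a'a-b'b)$ and $(au'+bv')v=b(1-r'r-a'a-b'b)$; since $ar=br=0$, setting $s:=au'+bv'$ and $t:=-(a'a+b'b)$ gives $s,t\in(0:r)$, $su=a(1+t)$ and $sv=b(1+t)$. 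This explicit two-line computation is the decisive step your proposal is missing.
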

\begin{proof}
{\bf only if}. Let $g=\alpha(1+Rr)$ and $h\in G$ such that $\pi(h)=r$. Clearly $g,h$ generate $G$, $(0:g)=Rr$ and $(Rg:h)=(0:r)$. So, there exist $u,v\in R$ such that $ah+ug=0$ and $bh+vg=0$, and $G$ is named by the matrix $A=\binom{r\ u\ v}{0\ a\ b}$. Since $G$ is projective of constant rank $1$ we have $F_0(G)=0$ and $F_1(G)=R$. We deduce that $av=bu$ and $Rr+Ra+Rb+Ru+Rv=R$. Whence there exist $r',a',b',u',v'\in R$ such that $r'r+a'a+b'b+u'u+v'v=1$. We get the following matrix equality: $\binom{b\ -a}{u'\ v'}\binom{u}{v}=\binom{0}{1-r'r-a'a-b'b}$. By multiplying on the left by $\binom{v'\ a}{-u'\ b}$ we obtain: $(au'+bv')u=a(1-r'r-a'a-b'b)$ and $(au'+bv')v=b(1-r'r-a'a-b'b)$. If we put $s=(au'+bv')$ and $t=-(a'a+b'b)$ we get the required relations.

{\bf if}. Let $G$ be the finitely presented module named by the matrix $A=\binom{r\ u\ v}{0\ a\ b}$. Consequently $G$ is generated by $g, h$ with the relations $rg=0,\ ug+ah=0$ and $vg+bh=0$. We get $Rg\cong R/Rr$ and $G/Rg\cong Rr$. It remains to show that $G$ is projective. The equalities $ra=0,\ rb=0$ and $bu=av$ implies that $F_0(G)=0$. Let $P$ be a maximal ideal. If $P\in D(r)$  then $F_1(G)_P\supseteq R_Pr=R_P$. If $P\in D((0:r))$  then $F_1(G)_P\supseteq (R_Pa+R_Pb)=R_P$. Now assume that $P\in V(r)\cap V((0:r))$. In this case $(1+t)$ is a unit of $R_P$. Consequently $(0:r)_P=R_Ps$. But since $R_P$ is a valuation ring, either $(0:r)_P=R_Pa$ or $(0:r)_P=R_Pb$. It follows that either $u$ or $v$ is a unit of $R_P$. So, $F_1(G)_P\supseteq (R_Pu+R_Pv)=R_P$. We conclude that $F_1(G)=R$ and $G$ is projective of rank $1$ by Proposition \ref{P:Fitt}.
\end{proof}

\begin{lemma}\label{L:summand}Let $U,\ U',\ V,\ V'$ and $G$ be modules over a ring $R$. Assume there exists an exact sequence $0\rightarrow U\oplus V\rightarrow G\xrightarrow{\alpha} U'\oplus V'\rightarrow 0$. Let $G'$ be the inverse image of $U'$ by $\alpha$. If $G=H\oplus J$, with $U\subseteq H$, $V\subseteq J$, $\alpha(H)=U'$ and $\alpha(J)=V'$, then $G'/V\cong H$. 
\end{lemma}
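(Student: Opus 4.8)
The plan is to show directly that the inverse image $G'=\alpha^{-1}(U')$ decomposes as $H\oplus V$ inside the given decomposition $G=H\oplus J$; the conclusion $G'/V\cong H$ is then immediate by projecting modulo $V$. First I would pin down the identifications that make the statement meaningful: the exact sequence lets us regard $U\oplus V$ as the submodule $\ker\alpha$ of $G$, so that $U$ and $V$ are submodules of $G$; since $U\subseteq H$, $V\subseteq J$ and $H\cap J=0$, the internal sum $U+V$ is direct and equals $\ker\alpha$. On the target side, $U'$ and $V'$ are the usual submodules $U'\oplus 0$ and $0\oplus V'$ of the external direct sum $U'\oplus V'$.

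The first key step is the computation $\ker\alpha\cap J=V$. The inclusion $V\subseteq\ker\alpha\cap J$ is clear. Conversely, if $x\in\ker\alpha\cap J$, write $x=u+v$ with $u\in U\subseteq H$ and $v\in V\subseteq J$; then $u=x-v\in H\cap J=0$, so $x=v\in V$. (The symmetric computation gives $\ker\alpha\cap H=U$, which exhibits the given sequence as the direct sum of $0\to U\to H\to U'\to 0$ and $0\to V\to J\to V'\to 0$; this is a pleasant side remark but not needed for the lemma.)

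The second key step identifies $G'$. Given $g\in G$, write $g=h+j$ uniquely with $h\in H$ and $j\in J$; then $\alpha(g)=\alpha(h)+\alpha(j)$ with $\alpha(h)\in\alpha(H)=U'$ and $\alpha(j)\in\alpha(J)=V'$, so $\alpha(h)$ and $\alpha(j)$ are precisely the two components of $\alpha(g)$ in $U'\oplus V'$. Hence $\alpha(g)\in U'$ if and only if $\alpha(j)=0$, i.e. if and only if $j\in\ker\alpha\cap J=V$. Therefore $G'=H+V$, and since $H\cap V\subseteq H\cap J=0$ this sum is direct, so $G'=H\oplus V$. Passing to the quotient by $V$ yields $G'/V\cong H$, as desired.

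The argument is a short diagram chase, so there is no serious obstacle; the only point requiring care is to keep the two direct-sum structures apart — the internal one $G=H\oplus J$ and the external one $U'\oplus V'$ — and to use the unique decomposition $g=h+j$ to read off the components of $\alpha(g)$, together with the hypotheses $\alpha(H)=U'$ and $\alpha(J)=V'$.
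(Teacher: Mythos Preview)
Your proof is correct and follows essentially the same short diagram chase as the paper's. The only cosmetic difference is organizational: you show directly that $G'=H\oplus V$ (via the computation $\ker\alpha\cap J=V$), whereas the paper instead verifies that the composite $H\hookrightarrow G'\twoheadrightarrow G'/V$ is injective and surjective; the underlying ingredients ($H\cap V=0$, $\alpha(H)=U'$, $\ker\alpha=U\oplus V$) are identical.
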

\begin{proof}
Clearly $H\subseteq G'$ and $H\cap V\subseteq H\cap J=0$. So there is a monomorphism $\lambda$ from $H$ into $G'/V$. Now, let $g\in G'$. Since $\alpha(g)\in U'$ and $\alpha(H)=U'$ there exists $h\in H$ such that $g-h\in\ker(\alpha)=U\oplus V$. So, there exists $u\in U$ such that $g\equiv (h+u)$ modulo $V$. Hence $\lambda$ is an isomorphism.
\end{proof}

\begin{theorem} \label{T:2G-sr} Let $R$ be a ring. The following two conditions are equivalent:
\begin{enumerate} 
\item  $R$ is  $2$-semiregular; 
\item 
Every finitely presented cyclic $R$-module is $2$-periodic;
\item $R$  verifies the following properties: 
\begin{enumerate}
\item $R$ is arithmetical and semiregular;
\item for each $r\in R$, $(0:r)$ is generated by at most two elements. Moreover, if $(0:r)=Ra+Rb$ there exist $u, v\in R$ and $s, t\in (0:r)$ such that $av=bu,\ su=a(1+t)$ and $sv=b(1+t)$.
\end{enumerate}
\end{enumerate} 
\end{theorem}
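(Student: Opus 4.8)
The plan is to prove the cycle of implications $(1)\Rightarrow(2)\Rightarrow(3)\Rightarrow(1)$; the first is immediate, since a finitely presented cyclic module is in particular finitely presented.

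For $(2)\Rightarrow(3)$ I would proceed in three steps. \emph{Semiregularity.} A $2$-periodic module $M$ sits in an exact sequence $0\to M\to P_2\to P_1\to M\to 0$ which over a coherent ring can be taken with $P_1,P_2$ finitely generated projective, so $M$ embeds in $P_2$, hence in a free module; thus every finitely presented cyclic module embeds in a free module and $R$ is semiregular, in particular coherent, by Proposition~\ref{P:semireg}. \emph{Arithmeticity.} Property $(2)$ localizes, because every finitely presented $S^{-1}R$-module is $S^{-1}M'$ for some finitely presented $R$-module $M'$; so it suffices to treat the local case, and there I would argue as in the $(2)\Rightarrow(3)$ step of Theorem~\ref{T:G-sr}: starting from a $2$-periodic resolution of $R/Ra$ with finitely generated free terms, split off an exact sequence $0\to R/Ra\to F\to R/(0:a)\to 0$ with $F$ free of rank at most $2$ (a minimal-generators count), and read comparability of principal ideals off the resulting presentation matrix, so $R$ is locally a valuation ring. \emph{Condition (b).} Localizing, $(0:r)$ is a finitely generated ideal of a valuation ring, hence locally principal, and a Forster--Swan type bound (once one knows $R$ has Krull dimension at most $1$, which follows from $R$ being locally a semiregular valuation ring) gives $(0:r)=Ra+Rb$. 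Then the second syzygy in a $2$-periodic resolution of $R/Rr$ is a finitely generated projective of rank $1$ (the rank being forced by localizing and using Proposition~\ref{P:Fitt}), so $R/Rr$ is presented by a matrix $\binom{r\ u\ v}{0\ a\ b}$, and the Fitting-ideal computation from the ``only if'' direction of Lemma~\ref{L:proj} produces $u,v,s,t$ and the relations $av=bu$, $su=a(1+t)$, $sv=b(1+t)$.

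For $(3)\Rightarrow(1)$: given $r\in R$, the data in (b) lets one invoke the ``if'' direction of Lemma~\ref{L:proj} to obtain $0\to R/Rr\to G\to Rr\to 0$ with $G$ finitely generated projective of rank $1$, and splicing with $0\to Rr\to R\to R/Rr\to 0$ yields the $2$-periodic resolution $0\to R/Rr\to G\to R\to R/Rr\to 0$; thus every finitely presented cyclic module is $2$-periodic. To reach an arbitrary finitely presented module $M$, I would use that $R$, being arithmetical and semiregular, is an elementary divisor ring (via the relevant criterion of \cite{Cou03}, as in Theorem~\ref{T:G-sr}), so $M$ is a direct sum of finitely presented cyclic modules by \cite[(3.8)Theorem]{LLS74}, and a direct sum of $2$-periodic modules is $2$-periodic. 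One could instead peel off cyclic direct summands from a finite presentation of $M$ step by step with Lemmas~\ref{L:proj} and~\ref{L:summand}.

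The crux is $(2)\Rightarrow(3)$, and within it the derivation of condition (b): one must upgrade the bare existence of a $2$-periodic projective resolution of $R/Rr$ to the explicit matrix normal form $\binom{r\ u\ v}{0\ a\ b}$ for its second syzygy — pinning down via Proposition~\ref{P:Fitt} that this syzygy is a rank-one projective — and then extract the polynomial identities. This is delicate precisely because, unlike in the $1$-periodic setting, the relevant syzygy can have rank $2$, so one cannot simply conclude $R/Rr\cong Rr$; the local comparability argument proving $R$ arithmetical is the other technical point, and the dimension bound underlying the ``two generators suffice'' claim is a further thing to nail down.
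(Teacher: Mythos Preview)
Your proposal contains two genuine gaps, both stemming from over-optimistic structural assumptions on $2$-semiregular rings.

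\textbf{The step $(3)\Rightarrow(1)$ fails.} You assert that an arithmetical semiregular ring is an elementary divisor ring via \cite{Cou03}, so that every finitely presented module decomposes as a direct sum of cyclics. This is false: the paper later exhibits (Remark following Proposition~\ref{P:2F}) $2$-semiregular rings that are not even B\'ezout, hence certainly not elementary divisor rings. The criterion from \cite{Cou03} used in Theorem~\ref{T:G-sr} requires every prime to be maximal, which is specific to the $1$-semiregular case. What arithmeticity does give is that a finitely presented module $U$ is a direct \emph{summand} of a direct sum $W=\bigoplus_k C_k$ of cyclically presented modules. The paper then has to work: it builds a $2$-periodic resolution of $W$, applies Lemma~\ref{L:2F} to pass to $0\to W\to G\to F\to W\to 0$ with $F$ free adapted to the splitting $U\oplus V$, pulls back along the projection to $U'$ to get $G'$, and checks \emph{locally} (where everything does split as a sum of cyclics) via Lemma~\ref{L:summand} that $G'/V$ is free, hence globally projective. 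Your parenthetical ``peel off cyclic direct summands'' does not address this, since $U$ itself need not have any cyclic direct summand.

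\textbf{The two-generator bound in $(3)(b)$.} You derive it from Forster--Swan and the claim that $R$ has Krull dimension $\le 1$. But $2$-semiregular rings can have arbitrary Krull dimension: for a valuation domain $D$ of dimension $n$ and $0\ne d$ in the height-one prime, $D/dD$ is $2$-semiregular by Proposition~\ref{P:2F} and has dimension $n-1$. Since semiregular arithmetical rings with $(0:r)$ requiring more than two generators do exist (see the second Remark after Proposition~\ref{P:2F}), the bound really must be extracted from the $2$-periodicity hypothesis. The paper does this directly: from $0\to R/Rr\to G\to Rr\to 0$ with $G$ finitely generated projective it lifts $G\to R$ over $Rr\hookrightarrow R$; the cokernel of $G\to R$ is cyclic and coincides with the cokernel of the induced map $R/Rr\to (0:r)$, so $(0:r)$ is a cyclic extension of a cyclic module, hence $2$-generated. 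Your sketch of arithmeticity via a single $R/Ra$ is also not the paper's route: the paper assumes $a,b$ incomparable in a local $R$, applies Lemma~\ref{L:2F} to $R/(Ra+Rb)$, compares with the minimal presentation $R^2\to Ra+Rb$, and forces $Ra\cap Rb=0$, contradicting semiregularity.
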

\begin{proof} 
$(1)\Rightarrow (2)$. Evident.

$(2)\Rightarrow (3)$. 

$(a)$. By Proposition \ref{P:semireg} $R$ is semiregular. We will prove that $R_P$ is a valuation ring for each maximal ideal $P$. By Proposition \ref{P:locS} $R_P$ is $2$-semiregular for each maximal ideal $P$. So, we may assume that $R$ is a local ring and $P$ its maximal ideal. 

Let $a,b$ be two non-zero elements of $P$. By way of contradiction suppose that $a\notin Rb$ and $b\notin Ra$. Let $A=Ra+Rb$ and $\pi:R^2\rightarrow A$ be the homomorphism defined by $\pi((r,s))=ra-sb$. If $K=\ker(\pi)$, then $K\subseteq PR^2$. By Lemma \ref{L:2F} applied to $M=R/A$ we get that there exists an exact sequence \[0\rightarrow R/A\rightarrow G\rightarrow A\rightarrow 0,\] where $G$ is a free $R$-module of finite rank. By using the fact that $G$ is projective there exists a homomorphism $\phi:G\rightarrow R^2$ such that the following diagram is commutative:
\[\begin{matrix}
{} & {} & {} & {} & {} & {} & 0 & {} & {}\\
{} & {} & {} & {} & {} & {} & \downarrow & {} & {} \\
0 & \rightarrow & R/A & \rightarrow & \ G & \rightarrow & A & \rightarrow & 0 \\
 {} & {} & \downarrow & {} & {\scriptstyle{\phi}}\downarrow & {} & \downarrow & {} & {} \\
0 & \rightarrow & K & \rightarrow & \ R^2 & \xrightarrow{\pi} & A & \rightarrow & 0 \\
{} & {} & {} & {} & {} & {} & \downarrow & {} & {} \\
{} & {} & {} & {} & {} & {} & 0 & {} & {} 
\end{matrix}\]
where  the right vertical map is the identity. Since $K\subseteq PR^2$, $\phi$ is surjective. It follows that the left vertical map is surjective too. Let $L=\ker(\phi)$. Then $L$ is projective and isomorphic to a submodule of $R/A$. It follows that $L=0$, $\phi$ and the left vertical map are isomorphisms. We deduce that $K$ is generated by one element $(s,t)$ which is annihilated by $A$. This implies that $Ra\cap Rb=0$. By \cite[Proposition 4.1]{Mat85} we get $(0:a)+(0:b)=(0:Ra\cap Rb)=R$. But, clearly $(0:a)$ and $(0:b)$ are contained in $P$. We conclude that the ideals $Ra$ and $Rb$ are comparable.

$(b)$. Let $r\in R$ and $A=(0:r)$. By Lemma \ref{L:2F} applied to $M=R/Rr$ we get that there exists an exact sequence \[0\rightarrow R/Rr\rightarrow G\rightarrow Rr\rightarrow 0,\] where $G$ is a projective $R$-module of finite rank. By using the fact that $G$ is projective there exists a homomorphism $\phi:G\rightarrow R$ such that the following diagram is commutative:
\[\begin{matrix}
 {} & {} & {} & {} & {} & {} & 0 & {} & {}\\
{} & {} & {} & {} & {} & {} & \downarrow & {} & {} \\
0 & \rightarrow & R/Rr & \rightarrow & \ G & \rightarrow & Rr & \rightarrow & 0 \\
 {} & {} & \downarrow & {} & {\scriptstyle{\phi}}\downarrow & {} & \downarrow & {} & {} \\
0 & \rightarrow & A & \rightarrow & \ R & \rightarrow & Rr & \rightarrow & 0 \\
{} & {} & {} & {} & {} & {} & \downarrow & {} & {} \\
{} & {} & {} & {} & {} & {} & 0 & {} & {} 
\end{matrix}\]
where  the right vertical map is the identity. Let $C$ be the cokernel of $\phi$. Then $C$ is cyclic and it is isomorphic to the cokernel of the left vertical map. Hence we get the desired property. 

The last assertion is a consequence of Lemma \ref{L:proj}.

$(3)\Rightarrow (1)$. By using Lemma \ref{L:proj} the conditions $(a)$ and $(b)$ imply that $R/Rr$ is $2$-periodic for each $r\in R$. It follows that each direct sum of cyclically presented modules is $2$-periodic. Let $U$ be a finitely presented $R$-module. Since $R$ is arithmetical, $U$ is a direct summand of a module $W=\oplus_{1\leq k\leq n}C_k$, where $C_k$ is a cyclically presented module, for $k=1,\dots,n$. So, there exists an exact sequence $0\rightarrow W\rightarrow F_1\rightarrow F_0\rightarrow W\rightarrow 0$ where $F_0$ and $F_1$ are projective of rank $n$. Let $V$ be an $R$-module satisfying $U\oplus V=W$. There exists two free modules $L$ of rank $p$ and $M$ of rank $q$ and two exact sequences \[0\rightarrow U'\rightarrow L\xrightarrow{\gamma} U\rightarrow 0,\ \ \ \ 0\rightarrow V'\rightarrow M\xrightarrow{\delta} V\rightarrow 0.\] Let $F=L\oplus M$ and $W'=U'\oplus V'$. By Lemma \ref{L:2F} there exists a finitely generated projective $R$-module $G$ and an exact sequence $0\rightarrow W\rightarrow G\xrightarrow{\alpha} F\xrightarrow{\beta} W\rightarrow 0,$ where $\beta$ is induced by $\gamma$ and $\delta$.  By using Diagram \ref{E:diag}, we easily get that $F$ and $G$ have the same rank. Let $G'$ be the inverse image of $U'$ by $\alpha$. We get the following exact sequence: $0\rightarrow U\rightarrow G'/V\rightarrow L\xrightarrow{\gamma} U\rightarrow 0$. We will prove that $G'/V$ is projective and will deduce that $U$ is $2$-periodic. Since $G'/V$ is finitely presented it is enough to show that $(G'/V)_P$ is flat for each maximal ideal $P$. Let $P$ be a maximal ideal of $R$. Then $G_P$ is a free $R_P$-module and $U_P$, $V_P$ and $W_P$ are direct sums of cyclic finitely presented modules. So, there exist a basis $(\epsilon_i)_{1\leq i\leq p+q}$ of $G_P$, a basis $(e_i)_{1\leq i\leq p}$ of $L_P$, a basis $(e_i)_{p+1\leq i\leq p+q}$ of $M_P$, and a family $(r_i)_{1\leq i\leq p+q}$ of elements of $R_P$ such that $\alpha_P(\epsilon_i)=r_ie_i$ for each $i=1,\dots,p+q$. So, $U'_P=\oplus_{1\leq j\leq p}R_Pr_je_j$, $U\cong\oplus_{1\leq j\leq p}R_P/R_Pr_j\cong\oplus_{1\leq j\leq p}R_Ps_j$ where $R_Ps_j=(0:r_j)$ for each $j,\ 1\leq j\leq p$. Then, if $H=\oplus_{1\leq j\leq p}R_P\epsilon_j$, by Lemma \ref{L:summand} $(G'/V)_P\cong H$, and consequently $(G'/V)_P$ is free.
\end{proof}

It is clear using \cite[Theorem 2.1]{EG} that a  B\'ezout ring $R$ is $2$-quasi-Frobenius if and only if it is quasi-Frobenius. The following direct consequence of Theorem \ref{T:2G-sr}  may be seen as the semiregular  counterpart of this result. 

\begin{corollary} Let $R$ be a B\'ezout ring. Then $R$ is $2$-semiregular if and only if $R$ is semiregular.
\end{corollary}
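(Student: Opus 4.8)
The plan is to deduce everything from Theorem~\ref{T:2G-sr}. The ``only if'' direction is immediate, and was essentially noted after Proposition~\ref{P:semireg}: if $R$ is $2$-semiregular, then each finitely presented cyclic $R$-module $M$ embeds in the projective term $P_2$ of a $2$-periodic resolution $0\to M\to P_2\to P_1\to M\to 0$, hence in a free module, so $R$ is semiregular by Proposition~\ref{P:semireg}\,$(3)\Rightarrow(1)$.

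For the converse I would assume $R$ is B\'ezout and semiregular and verify condition~$(3)$ of Theorem~\ref{T:2G-sr}. Condition~$(a)$ is clear: a B\'ezout ring is arithmetical, since every finitely generated ideal is principal, hence locally principal, and $R$ is semiregular by hypothesis. For condition~$(b)$, recall that a semiregular ring is coherent by Proposition~\ref{P:semireg}\,$(4)$, so $(0:r)$ is finitely generated for each $r\in R$; being B\'ezout, it is in fact principal, say $(0:r)=Rc$, in particular generated by at most two elements.

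There remains the ``moreover'' clause of condition~$(b)$, which I would obtain from Lemma~\ref{L:proj}: for $r\in R$ with $(0:r)=Ra+Rb$, the required $u,v\in R$ and $s,t\in(0:r)$ exist provided there is an exact sequence $0\to R/Rr\to G\to Rr\to 0$ with $G$ projective of rank $1$. Such a sequence is produced by semiregularity: since $(0:(0:r))=Rr$ by Proposition~\ref{P:semireg}\,$(4)$ and $(0:r)=Rc$, we get $(0:c)=Rr$, so multiplication by $c$ induces an isomorphism $R/Rr\xrightarrow{\ \sim\ }Rc=(0:r)$; composing with the inclusion $Rc\hookrightarrow R$ gives the exact sequence
\[0\longrightarrow R/Rr\longrightarrow R\xrightarrow{\ \cdot r\ }R\longrightarrow R/Rr\longrightarrow 0,\]
whose first three terms constitute an exact sequence $0\to R/Rr\to R\to Rr\to 0$ with middle term the free, hence rank-one projective, module $R$. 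Thus Lemma~\ref{L:proj} applies, condition~$(3)$ holds, and $R$ is $2$-semiregular by Theorem~\ref{T:2G-sr}.

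The argument is essentially formal once the results above are invoked; the only point needing a moment's care is matching the explicit sequence displayed above with the hypotheses of Lemma~\ref{L:proj} (the middle module $R$ is projective of constant rank $1$, and the left-hand arrow is $\bar x\mapsto cx$ where $(0:r)=Rc$, after which the lemma extracts the relations for any chosen generating pair $a,b$ of $(0:r)$). A more direct-looking route, namely to show that a B\'ezout semiregular ring is an elementary divisor ring and then invoke Proposition~\ref{P:2sr}, does not seem to simplify matters.
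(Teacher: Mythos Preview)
Your proof is correct and follows the route the paper indicates: the corollary is stated there as a ``direct consequence of Theorem~\ref{T:2G-sr}'' with no further argument, and you have simply filled in the verification of condition~$(3)$ of that theorem. One small simplification: the detour through Lemma~\ref{L:proj} for the ``moreover'' clause is unnecessary. Since $(0:r)=Rc$ is principal, write $a=\alpha c$ and $b=\beta c$; then taking $u=\alpha$, $v=\beta$, $s=c$, $t=0$ gives $av=\alpha c\beta=bu$, $su=c\alpha=a=a(1+t)$ and $sv=c\beta=b=b(1+t)$, with $s,t\in(0:r)$, so condition~$(3)(b)$ holds directly.
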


\begin{corollary} \label{C:semilocal} Let $R$ be a semilocal ring. Then the following conditions are equivalent:
\begin{enumerate}
\item $R$ is $2$-semiregular;
\item Every finitely presented cyclic $R$-module is $2$-periodic;
\item $R$ is semiregular and arithmetical.
\end{enumerate}
\end{corollary}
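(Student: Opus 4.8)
The plan is to read off almost everything from Theorem~\ref{T:2G-sr} and to treat only $(3)\Rightarrow(1)$ as requiring an argument. The equivalence $(1)\Leftrightarrow(2)$ is already part of Theorem~\ref{T:2G-sr}, valid for an arbitrary ring, so nothing is needed for it. For $(1)\Rightarrow(3)$ I would simply note that condition $(3)(a)$ of Theorem~\ref{T:2G-sr} --- namely that $R$ is arithmetical and semiregular --- is verbatim condition $(3)$ of the present corollary; hence a $2$-semiregular ring (semilocal or not) satisfies $(3)$. Consequently the only content of the corollary is that a \emph{semilocal}, arithmetical, semiregular ring is $2$-semiregular.

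For this last implication the key step is the classical fact that a semilocal arithmetical ring is a B\'ezout ring. I would prove it by an induction that reduces the problem to showing that a two-generated ideal $I=Ra+Rb$ is principal. Let $P_1,\dots,P_n$ be the maximal ideals of $R$. For each $i$ the localization $R_{P_i}$ is a valuation ring, so $I_{P_i}$ is generated by the image of $a$ or by the image of $b$; writing the other generator as a multiple of it over $R_{P_i}$, one checks that $c:=a+\lambda b$ generates $I_{P_i}$ as soon as the residue of $\lambda$ in $R/P_i$ avoids one prescribed value (and for some $i$ no restriction at all is imposed). Since every residue field has at least two elements, the Chinese Remainder Theorem $R/(P_1\cap\cdots\cap P_n)\cong\prod_{i=1}^{n}R/P_i$ produces $\lambda\in R$ meeting all these finitely many constraints simultaneously; then $c\in I$ and $cR_{P_i}=I_{P_i}$ for every maximal ideal, so $I=cR$.

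Once $R$ is known to be a B\'ezout ring, it is a semiregular B\'ezout ring, hence $2$-semiregular by the corollary immediately preceding this one. (Equivalently, one may invoke that a semilocal B\'ezout ring is an elementary divisor ring together with Proposition~\ref{P:2sr}.) The main obstacle is really only the B\'ezout claim, and within it the uniform bookkeeping across all $P_i$ --- in particular the degenerate cases $a=0$, $I\subseteq P_i$, and $R/P_i$ of size two; the count ``avoid at most one residue per maximal ideal'' is exactly what makes the patching succeed. Since this fact is classical, it could also simply be quoted rather than reproved.
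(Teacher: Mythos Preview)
Your proposal is correct and follows essentially the same route as the paper: the equivalences $(1)\Leftrightarrow(2)$ and $(1)\Rightarrow(3)$ are read off from Theorem~\ref{T:2G-sr}, and only $(3)\Rightarrow(1)$ requires the semilocal hypothesis. The one difference is in the classical input used for $(3)\Rightarrow(1)$: the paper simply cites \cite[(2.3)~Corollary]{LLS74} to conclude directly that a semilocal arithmetical ring is an elementary divisor ring, and then applies Proposition~\ref{P:2sr} --- exactly the alternative you mention parenthetically --- whereas your primary exposition proves the weaker B\'ezout conclusion by hand and then appeals to the preceding corollary. Both routes are valid; yours is more self-contained, the paper's is shorter.
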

\begin{proof}
$(1)\Rightarrow (2)$ is obvious.

$(2)\Rightarrow (3)$ follows from Theorem \ref{T:2G-sr} ($(2)\Rightarrow (3)(a)$).

$(3)\Rightarrow (1)$. By \cite[(2.3)Corollary]{LLS74} $R$ is an elementary divisor ring. We conclude by Proposition \ref{P:2sr}.
\end{proof}

From Corollary \ref{C:semilocal} and \cite[Theorem 10]{Couch03} we deduce the following:

\begin{corollary} \label{C:Cyc-val} Let $R$ be a local ring which is not a field. The  following assertions are equivalent:
\begin{enumerate}
\item $R$ is $2$-semiregular;
\item every finitely presented cyclic module is $2$-periodic;
\item $R$ is a valuation semiregular ring;
\item $R$ is a valuation ring and there exists $0\ne a\in R$ such that $(0:a)$ is  nonzero and finitely generated;
\item $R$ is a B\'ezout ring and its maximal ideal $P$ is not flat.
\end{enumerate} 
\end{corollary}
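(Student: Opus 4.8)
The plan is to prove Corollary \ref{C:Cyc-val} by assembling the already-established machinery, treating $(3)$ as a hub and cycling through the other equivalences. First I would note that $(1)\Leftrightarrow(2)$ is immediate from Corollary \ref{C:semilocal}, since a local ring is in particular semilocal. For $(2)\Rightarrow(3)$: Corollary \ref{C:semilocal} gives that $R$ is semiregular and arithmetical, and a local arithmetical ring is a valuation ring by definition; so $R$ is a valuation semiregular ring. For $(3)\Rightarrow(2)$: again Corollary \ref{C:semilocal} applies, since a valuation ring is arithmetical and local, hence semilocal.

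Next I would handle the link between $(3)$ and $(4)$. Assume $(3)$, so $R$ is a valuation ring that is semiregular and not a field. Using the characterization in Proposition \ref{P:semireg}(4), or more directly the fact that a non-field valuation ring has a nonzero maximal ideal $P$, I would produce $0\ne a\in P$; coherence of $R$ (semiregular rings are coherent) forces $(0:a)$ to be finitely generated, and semiregularity via $(0:(0:a))=Ra\ne R$ forces $(0:a)\ne 0$. Conversely, assuming $(4)$: $R$ is a valuation ring with some $0\ne a$ such that $(0:a)$ is nonzero and finitely generated. Here is where I would invoke \cite[Theorem 10]{Couch03}: in a valuation ring, the existence of a single such element $a$ already upgrades to the statement that $R$ is coherent and self FP-injective, equivalently semiregular (by \cite[Proposition 3.3]{Mat85}). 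That citation is doing the real work, and identifying precisely which equivalent form of \cite[Theorem 10]{Couch03} is being used — a valuation ring is self FP-injective and coherent iff it has a nonzero finitely generated annihilator ideal — is the step I expect to be the main obstacle, since it requires matching the hypothesis ``$(0:a)$ nonzero and finitely generated'' exactly against the cited theorem's conditions.

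Finally I would connect $(3)$ (or $(4)$) with $(5)$. Assume $(3)$: a valuation ring is trivially a B\'ezout ring, and its maximal ideal $P$ is not flat — for if $P$ were flat, then since $P\ne 0$ is locally (in fact globally) the valuation ideal, we would get that every annihilator is zero, contradicting $(0:(0:P))=P\ne R$ which semiregularity supplies; more concretely, a flat ideal over a local ring with an element $a$ having $(0:a)\ne 0$ is impossible, so $(3)\Rightarrow(5)$. Conversely, assume $(5)$: $R$ is a B\'ezout local ring, hence a valuation ring, whose maximal ideal $P$ is not flat. Non-flatness of $P$ means $P$ is not a pure ideal, which (again by \cite[Theorem 10]{Couch03}, or by the fact that over a valuation ring a non-flat finitely generated ideal detects a nonzero finitely generated annihilator) yields the existence of $0\ne a$ with $(0:a)$ nonzero finitely generated, i.e.\ condition $(4)$. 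Thus $(5)\Rightarrow(4)\Rightarrow(3)$, closing the chain $(1)\Leftrightarrow(2)\Leftrightarrow(3)\Leftrightarrow(4)\Leftrightarrow(5)$.

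I would write up the argument as the cycle $(1)\Rightarrow(2)\Rightarrow(3)\Rightarrow(4)\Rightarrow(5)\Rightarrow(3)\Rightarrow(2)\Rightarrow(1)$ with the two extra arrows $(3)\Rightarrow(4)$ and $(3)\Leftrightarrow(2)$ noted separately, so that no implication is longer than a couple of lines and each one points at Corollary \ref{C:semilocal}, Proposition \ref{P:semireg}, Proposition \ref{P:2sr}, or \cite[Theorem 10]{Couch03}. The only genuinely delicate point, worth a careful sentence, is extracting from \cite[Theorem 10]{Couch03} the precise equivalence between semiregularity of a valuation ring, the existence of a nonzero finitely generated annihilator, and the non-flatness of the maximal ideal; everything else is bookkeeping over results already proved in the paper.
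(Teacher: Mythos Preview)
Your approach is essentially the paper's: the paper's entire proof is the single line ``From Corollary \ref{C:semilocal} and \cite[Theorem 10]{Couch03} we deduce the following,'' and you are simply unpacking that---using Corollary \ref{C:semilocal} for $(1)\Leftrightarrow(2)\Leftrightarrow(3)$ and deferring to \cite[Theorem 10]{Couch03} for the equivalences among $(3)$, $(4)$, $(5)$. One caution: your direct argument for $(3)\Rightarrow(5)$ (``a flat ideal over a local ring with an element $a$ having $(0:a)\ne 0$ is impossible'') is not correct as stated---what you need is that if $P$ were flat and $(0:a)=Rb\ne 0$, then $ab=0$ forces $b\in(0:a)P=bP$, contradicting Nakayama---but since you ultimately route this through \cite[Theorem 10]{Couch03} anyway, the overall proof stands.
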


\section{Examples and applications}
We end this paper with this section which provides examples and illustrates the limit of some  results given in the previous section.\medskip

Let us start with the direct product of rings.

\begin{proposition}
Let $(R_{\lambda})_{\lambda\in\Lambda}$ be a family of rings and $R=\prod_{\lambda\in\Lambda}R_{\lambda}$. Then $R$ is $2$-semiregular if and only if $R_{\lambda}$ is $2$-semiregular for each $\lambda\in\Lambda$.
\end{proposition}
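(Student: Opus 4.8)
The plan is to imitate the proof of Proposition~\ref{P:produit}. For $\lambda\in\Lambda$ set $e_\lambda=(\delta_{\lambda,\mu})_{\mu\in\Lambda}$ as in that proof; since $R_\lambda\cong R/(1-e_\lambda)R=S_\lambda^{-1}R$ with $S_\lambda=\{e_\lambda\}$, the ``only if'' direction is immediate from Proposition~\ref{P:locS}(1): if $R$ is $2$-semiregular, then so is each $R_\lambda$.

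For the converse, assume every $R_\lambda$ is $2$-semiregular and let $C$ be a finitely presented cyclic $R$-module. By \cite[12.9]{Wis91} we have $C\cong\prod_{\lambda\in\Lambda}C_\lambda$, where $C_\lambda:=R_\lambda\otimes_RC$ is a finitely presented cyclic $R_\lambda$-module. By Theorem~\ref{T:2G-sr}, each $R_\lambda$ is arithmetical and semiregular; running on $C_\lambda$ the construction used to prove the implication $(3)\Rightarrow(1)$ of that theorem (which rests on Lemma~\ref{L:proj}) produces an exact sequence $0\to C_\lambda\to G_\lambda\to R_\lambda\to C_\lambda\to 0$ in which $G_\lambda$ is a finitely generated projective $R_\lambda$-module of rank one, hence --- since over an arithmetical ring such a module is generated by two elements --- a direct summand of $R_\lambda^2$. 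The crucial feature is that this bound is uniform in $\lambda$. Put $G:=\prod_{\lambda}G_\lambda$. Since the product of a family of modules is an exact functor and $\prod_{\lambda}(G_\lambda\oplus H_\lambda)\cong(\prod_{\lambda}G_\lambda)\oplus(\prod_{\lambda}H_\lambda)$, writing $R_\lambda^2=G_\lambda\oplus H_\lambda$ shows that $G$ is a direct summand of $R^2$, while the product of the above sequences is an exact sequence $0\to C\to G\to R\to C\to 0$. Thus $C$ is $2$-periodic, and we conclude by Theorem~\ref{T:2G-sr} that $R$ is $2$-semiregular.

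The delicate point is exactly the uniform bound on $G_\lambda$, i.e.\ that a finitely presented cyclic module over a $2$-semiregular ring admits a $2$-periodic resolution whose middle term is $2$-generated; this combines the fact that $C_\lambda$, being cyclic over the arithmetical ring $R_\lambda$, is a direct summand of a finite direct sum of cyclically presented modules, so that the projective produced by the construction of Theorem~\ref{T:2G-sr} has rank one, with the annihilator condition of Theorem~\ref{T:2G-sr}(3)(b), which controls $(0:r)$ by two generators. A resolution-free alternative is to check directly that $R=\prod_\lambda R_\lambda$ satisfies condition $(3)$ of Theorem~\ref{T:2G-sr}: finitely generated ideals and annihilators in a direct product are the products of the corresponding objects over the factors; being arithmetical is detected componentwise, via the criterion that for all $a,b$ there exist $r,s$ with $r+s=1$, $ra\in Rb$ and $sb\in Ra$; self-FP-injectivity passes to direct products since $\mathrm{Ext}^1_R(-,-)$ commutes with products in its second variable and each $R_\lambda$ is a ring-theoretic direct factor of $R$; and the bound ``$(0:x)$ is $2$-generated'' makes both the coherence of $R$ and condition $(3)(b)$ for $R$ reduce to the same statements over the $R_\lambda$.
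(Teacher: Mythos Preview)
Your ``only if'' direction matches the paper's. For the converse, however, your primary argument (approach~1) has a genuine gap. You assert that a finitely generated projective module of rank one over an arithmetical ring is two-generated, and use this to conclude that each $G_\lambda$ is a direct summand of $R_\lambda^2$. This assertion is false in general: Swan's examples (cited in the paper as \cite{Swa84}) give Pr\"ufer domains --- hence arithmetical rings --- with invertible ideals requiring arbitrarily many generators. You might hope the claim holds over $2$-semiregular rings specifically, but you do not justify this, and it is not clear how the ingredients you cite (rank one from the construction in Theorem~\ref{T:2G-sr}, plus the two-generator bound on $(0:r)$) combine to bound the generators of the subquotient $G'/V$ produced in the proof of $(3)\Rightarrow(1)$. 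Note that Lemma~\ref{L:proj} does give an explicitly two-generated $G_\lambda$, but only when $C_\lambda$ is \emph{cyclically presented}, i.e.\ $C_\lambda=R_\lambda/R_\lambda r_\lambda$; a general finitely presented cyclic $C_\lambda=R_\lambda/A_\lambda$ need not be of this form over a non-B\'ezout $2$-semiregular ring, so restricting to cyclically presented $C$ does not suffice for Theorem~\ref{T:2G-sr}(2).

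Your alternative (approach~2) --- verifying condition~(3) of Theorem~\ref{T:2G-sr} componentwise --- is correct, and it is worth comparing with the paper's argument. The paper also checks that $R$ is arithmetical and coherent componentwise (the latter via the uniform two-generator bound on $(0:r_\lambda)$ and \cite[Corollary~1.11]{ShWi74}), but then takes a different route: rather than verifying~(3)(b) for $R$, it observes that the componentwise $2$-periodic resolutions of a finitely presented $M$ assemble into a product of projectives, which is \emph{flat} because $R$ is coherent; hence $M$ is $2$-F-periodic, and Lemma~\ref{L:2F} upgrades this to $2$-periodic. This sidesteps any uniform bound on the projectives $G_\lambda,F_\lambda$. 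Your approach~2 is more elementary in that it avoids the detour through $2$-F-periodicity and Lemma~\ref{L:2F}, instead transferring the elements $u,v,s,t$ of~(3)(b) directly along the product decomposition; the paper's approach, on the other hand, illustrates the utility of Lemma~\ref{L:2F} and handles arbitrary finitely presented $M$ in one stroke.
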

\begin{proof} For $\lambda\in\Lambda$ let $e_{\lambda}=(\delta_{\lambda,\mu})_{\mu\in\Lambda}$, where $\delta_{\lambda,\mu}$ is the Kronecker symbol. Since $R_{\lambda}\cong R/(1-e_{\lambda})R$, $R_{\lambda}=S^{-1}_{\lambda}R$ where $S_{\lambda}=\{e_{\lambda}\}$. 

So, if $R$ is $2$-semiregular then $R_{\lambda}$ is $2$-semiregular by Proposition \ref{P:locS}.

Conversely, assume that $R_{\lambda}$ is $2$-semiregular for each $\lambda\in\Lambda$. First we show that $R$ is arithmetical\footnote{Any direct product of arithmetical rings is arithmetical too. Possibly this result is already known.}. Let $A,\ B$ and $C$ be ideals of $R$. By \cite[Theorem 1]{Jen66} we have to show that $(A+B)\cap C\subseteq (A\cap C)+(B\cap C)$. If $c=a+b$ with $a=(a_{\lambda})_{\lambda\in\Lambda}\in A,\ b =(b_{\lambda})_{\lambda\in\Lambda}\in B$ and $c =(c_{\lambda})_{\lambda\in\Lambda}\in C$ then, for each $\lambda\in\Lambda,\ c_{\lambda}=a_{\lambda}+b_{\lambda}$. Since $(R_{\lambda}a_{\lambda}+ R_{\lambda}b_{\lambda})\cap R_{\lambda}c_{\lambda}=(R_{\lambda}a_{\lambda}\cap R_{\lambda}c_{\lambda})+(R_{\lambda}b_{\lambda}\cap R_{\lambda}c_{\lambda})$ we have $c_{\lambda}=a'_{\lambda}+ b'_{\lambda}$ with $a'_{\lambda}= s_{\lambda} a_{\lambda}= u_{\lambda}c_{\lambda}$ and $b'_{\lambda}= t_{\lambda} b_{\lambda}= v_{\lambda}c_{\lambda}$ where $s_{\lambda}, u_{\lambda}, t_{\lambda}, v_{\lambda}\in R_{\lambda}$. So, if we put $a'=(a'_{\lambda})_{\lambda\in\Lambda}$ and $b'=(b'_{\lambda})_{\lambda\in\Lambda}$ we get that $c=a'+b'$ with $a'\in Ra\cap Rc$ and $b'\in Rb\cap Rc$. Hence $R$ is arithmetical.

Now, we show that $R$ is coherent. By \cite[Corollary 1.11]{ShWi74} it is enough to check that $(0:r)$ is finitely generated for each $r=(r_{\lambda})_{\lambda\in\Lambda}\in R$. Clearly $(0:r)=\prod_{\lambda\in\Lambda}(0:r_{\lambda})$. By Theorem \ref{T:2G-sr}$(3)(b)$ $(0:r_{\lambda})$ is generated by at most $2$ elements for each $\lambda\in\Lambda$. It follows that $(0:r)$ is generated by at most $2$ elements too. Hence $R$ is coherent.

Let $M$ be a finitely presented $R$-module and $M_{\lambda}=R_{\lambda}\otimes_RM$ for each $\lambda\in\Lambda$. We have $M\cong\prod_{\lambda\in\Lambda}M_{\lambda}$. For each $\lambda\in\Lambda$ there exist two projective $R_{\lambda}$-modules $F_{\lambda},\ G_{\lambda}$ and the following exact sequence:
\[0\rightarrow M_{\lambda}\rightarrow G_{\lambda}\rightarrow F_{\lambda}\rightarrow M_{\lambda}\rightarrow 0.\]
We put $F=\prod_{\lambda\in\Lambda}F_{\lambda}$ and $G=\prod_{\lambda\in\Lambda}G_{\lambda}$. We get the following exact sequence of $R$-modules:
\[0\rightarrow M\rightarrow G\rightarrow F\rightarrow M\rightarrow 0.\]
For each $\lambda\in\Lambda$, $G_{\lambda}$ and $F_{\lambda}$ are also projective over $R$. Since $R$ is coherent $G$ and $F$ are flat. So, $M$ is $2$-F-periodic. By using Lemma \ref{L:2F} it is easy to show that $M$ is $2$-periodic. Whence $R$ is $2$-semiregular. 
\end{proof}

Recall that a ring $D$ is said to be {\bf semihereditary} if each finitely generated ideal is projective. Any semihereditary integral domain is called a {\bf Pr\"ufer} domain.

\begin{proposition}\label{P:2F} Let $D$ be a semihereditary ring, $d\ne 0$ a non-unit of $D$ which is not a zerodivisor and $R=D/dD$. Then each $R$-module is $2$-F-periodic, whence $R$ is $2$-semiregular.
\end{proposition}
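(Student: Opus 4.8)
The plan is to produce, for an arbitrary $R$-module $M$, an explicit $2$-F-periodic resolution obtained by base change along $D\to R$, and then to invoke Corollary \ref{C:2F}.

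First I would record two structural facts. Since $D$ is semihereditary, every ideal of $D$ is flat (it is a directed union of finitely generated, hence projective, ideals, and a directed colimit of flat modules is flat); equivalently, every submodule of a flat $D$-module is flat. Second, since $d$ is a non-zerodivisor, $0\to D\xrightarrow{\,d\,}D\to R\to 0$ is a free resolution of $R$ over $D$, so for every $D$-module $X$ one has $\mathrm{Tor}^D_0(R,X)=X/dX$, $\mathrm{Tor}^D_1(R,X)=(0:_X d)$ and $\mathrm{Tor}^D_i(R,X)=0$ for $i\ge 2$. Applied to an $R$-module $M$ (so that $dM=0$) this gives $\mathrm{Tor}^D_0(R,M)\cong M\cong\mathrm{Tor}^D_1(R,M)$; applied to a free $D$-module $F$ it gives $\mathrm{Tor}^D_1(R,F)=0$.

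Next, given an $R$-module $M$, choose a short exact sequence $0\to K\to F\xrightarrow{\pi}M\to 0$ of $D$-modules with $F$ free over $D$. By the first fact $K$ is flat over $D$. Tensoring this sequence with $R$ over $D$ and using the long exact $\mathrm{Tor}^D(R,-)$-sequence together with $\mathrm{Tor}^D_1(R,F)=0$ yields an exact sequence of $R$-modules
\[0\to M\longrightarrow K/dK\longrightarrow F/dF\longrightarrow M\to 0,\]
where the two middle arrows are induced by $K\hookrightarrow F$ and by $\pi$, and the first arrow is the connecting homomorphism (here $\mathrm{Tor}^D_1(R,M)$ and $\mathrm{Tor}^D_0(R,M)$ have both been identified with $M$). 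Now $F/dF=R\otimes_D F$ is a free $R$-module, hence flat over $R$, and $K/dK=R\otimes_D K$ is flat over $R$ because $K$ is flat over $D$ and flatness is preserved under base change. Thus $M$ is $2$-F-periodic.

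Since $M$ was arbitrary, every $R$-module is $2$-F-periodic, and $R$ is $2$-semiregular by Corollary \ref{C:2F}. I do not expect a serious obstacle: the only points needing care are the identifications $\mathrm{Tor}^D_0(R,M)\cong\mathrm{Tor}^D_1(R,M)\cong M$ for an $R$-module $M$ (immediate from the length-one free resolution of $R$ over $D$) and the preservation of flatness of $K$ under $R\otimes_D-$; the shape of the resolution is then forced by the long exact Tor sequence.
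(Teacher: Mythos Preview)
Your argument is correct. In fact you and the paper build the \emph{same} exact sequence: writing the paper's $F'=F$ and $L'=K$ (note $dF\subseteq K$ since $dM=0$), multiplication by $d$ gives an isomorphism $G'=d^{-1}L'\xrightarrow{\sim}L'$ carrying $L'=dG'$ onto $dL'$, so the paper's $G=G'/L'$ is canonically isomorphic to $L'/dL'=K/dK$. Where the two proofs diverge is in showing this middle term is flat over $R$. You argue directly: $K$ is a submodule of a free $D$-module, hence flat over the semihereditary ring $D$ (weak global dimension $\le 1$), and flatness survives the base change $R\otimes_D-$. The paper instead passes to the total quotient ring $Q$ of $D$ (von Neumann regular because $D$ is semihereditary), identifies $G$ with the $d$-torsion of the FP-injective $D$-module $Q\otimes_DF'/L'$, deduces that $G$ is FP-injective over $R$, and then uses that $R$ is semiregular (cited from \cite{Cou03}) to convert FP-injectivity into flatness. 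Your route is shorter, avoids the external reference, and needs neither $Q$ nor the preliminary semiregularity of $R$; the paper's route, on the other hand, exhibits the FP-injectivity of $G$ explicitly, which fits the ambient theme of semiregular rings and makes transparent why the construction works without ever naming $\mathrm{Tor}$.
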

\begin{proof} By \cite[Proposition II.15]{Cou03} $R$ is semiregular. Let $M$ be an $R$-module. There exists an exact sequence of $R$-modules \[0\rightarrow L\rightarrow F\rightarrow M\rightarrow 0\] where $F$ is free. It is easy to see that $F=F'/dF'$ and $L=L'/dF'$ where $F'$ is a free $D$-module. Moreover $M\cong F'/L'$. Let $Q$ be the quotient ring of $D$ and $G'$ the submodule of $Q\otimes_DF'$ defined by $G'=d^{-1}L'$. We have $L'=dG'$. Let $G=G'/L'$. Then the multiplication by $d$ induces an epimorphism $\delta:G\rightarrow L$ and $\ker(\delta)=F'/L'\cong M$. We easily see that \[G=\{x\in Q\otimes_DF'/L'\mid dx=0\}.\]
Since $Q\otimes_DF'/L'$ is a FP-injective $D$-module ({\it $Q$ is von Neumann regular, so each $Q$-module is FP-injective over $Q$ and $D$, and each factor of a FP-injective $D$-module is FP-injective too}) it follows that $G$ is FP-injective over $R$. The semiregularity of $R$ implies that $G$ is flat over $R$.
\end{proof}

\begin{corollary}
Let $D$ be a semihereditary ring. Then, for any  elements $d,\ a$ of $D$, $(dD:a)$ and $dD\cap aD$ are generated by at most $3$ elements.
\end{corollary}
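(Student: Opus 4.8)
The plan is to read off the bound on $(dD:a)$ from the $2$-semiregularity of a suitable factor ring of $D$ furnished by Proposition \ref{P:2F}, after first peeling off the part of $D$ on which $d$ is a zerodivisor, and then to get the bound on $dD\cap aD$ for free from the identity $dD\cap aD=a\,(dD:a)$.

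First I would reduce to the case where $d$ is a nonzerodivisor. Since $D$ is semihereditary, $Dd$ is finitely generated and projective, so the exact sequence $0\to(0:d)\to D\to Dd\to 0$ splits and $(0:d)=eD$ for some idempotent $e$. Writing $D=eD\times D_1$ with $D_1=(1-e)D$, one has $ed=0$, hence $d=(0,d_1)$ with $d_1=(1-e)d$, and $d_1$ is a nonzerodivisor in $D_1$ because $(0:_{D_1}d_1)=D_1\cap(0:d)=D_1\cap eD=0$. Consequently $D/dD\cong eD\times(D_1/d_1D_1)$. If $d_1$ is a unit of $D_1$ then $D_1/d_1D_1=0$; otherwise $d_1$ is a nonzero nonunit nonzerodivisor of $D_1$, so by Proposition \ref{P:2F} the ring $D_1/d_1D_1$ is $2$-semiregular, and by Theorem \ref{T:2G-sr}(3)(b) the annihilator of any single element of it is generated by at most two elements. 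On the other factor the element $d$ acts as $0$, and for any $\xi\in D$ one has $(0:_D\xi)=gD$ for an idempotent $g$ (again because $D\xi$ is finitely generated projective), so annihilators of elements of $eD$ are principal. Pairing generators with the help of the two component idempotents, it follows that $(0:_{D/dD}\bar a)$ is generated by at most two elements for every $\bar a$.

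Finally I would assemble the count. The ideal $(dD:a)$ is exactly the preimage of $(0:_{D/dD}\bar a)$ under the surjection $D\to D/dD$, so $(dD:a)=dD+Db_1+Db_2$ for suitable $b_1,b_2\in D$; since $dD=Dd$ is principal, this exhibits $(dD:a)$ with at most three generators $d,b_1,b_2$. For the intersection one checks directly that $dD\cap aD=a\,(dD:a)$ for all $d,a\in D$, whence $dD\cap aD=Dad+Dab_1+Dab_2$ is generated by at most three elements as well. I expect no serious obstacle here: all the substance is in Proposition \ref{P:2F}, and the only points needing a little care are the idempotent splitting used to reach a nonzerodivisor, the fact that a finite product of rings in which element-annihilators need at most two generators again has this property, and the elementary identity $dD\cap aD=a\,(dD:a)$.
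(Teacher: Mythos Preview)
Your proof is correct and follows essentially the same route as the paper: split off the idempotent $e$ with $(0:d)=eD$, apply Proposition~\ref{P:2F} and Theorem~\ref{T:2G-sr}(3)(b) to the factor where $d$ becomes a nonzerodivisor to get that $(0:_{D/dD}\bar a)$ is two-generated, lift to three generators $d,b_1,b_2$ for $(dD:a)$, and use $dD\cap aD=a\,(dD:a)$ for the intersection. The paper carries out the same idempotent bookkeeping slightly differently---working with $(Rd:a)=\bigl((Rd:a)\cap Re_d\bigr)\oplus R(1-e_d)(1-e_a)$ and writing down the explicit generators $\{u+(1-e_d)(1-e_a),v,d\}$ and $\{ua,va,da\}$---but the substance is identical.
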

\begin{proof}
For each $r\in R$ there exists an idempotent $e_r\in R$ such that $(0:r)=R(1-e_r)$. We may assume that $d\ne 0$. Let $A:=(Rd:a)\cap Re_d$. By applying Theorem \ref{T:2G-sr} and Proposition \ref{P:2F} to the ring $Re_d$ we get that $A/Rd$ is generated by $2$ elements. We have $(Rd:a)=A\oplus R(1-e_d)(1-e_a)$. If $A=Ru+Rv+Rd$ then $(Rd:a)$ is generated by $\{u+(1-e_d)(1-e_a), v, d\}$ and $Rd\cap Ra$ by $\{ua, va, da\}$.
\end{proof}

\begin{remark}
There exist $2$-semiregular rings which are not B\'ezout.
\end{remark}
\begin{proof}
By \cite[Theorem 1]{Swa84}, for each integer $n>0$ there exists a Pr\"ufer domain $D$ with an ideal $I$ generated by at least $(n+1)$ generators. So, if $n>1$ and $d$ a non-zero element of $I$ then $R=D/dD$ is a $2$-semiregular ring which is not a B\'ezout ring.
\end{proof}

\begin{remark}
Some semiregular arithmetical rings are not $2$-semiregular.
\end{remark}
\begin{proof}
Let $n,\ D$ and $I$ be as in the proof of the previous remark. Assume that $I$ is generated by $r_0,r_1,\dots,r_n$. Then $I/r_0D$ is generated by at least $n$ elements. It follows that the minimal number of generators of $I/r_0I$ is $\geq n$. Let $R=D/r_0I$. Then $R$ is semiregular by \cite[Proposition 5.3]{Mat85} and arithmetical. Let $\bar{r_0}=r_0+r_0I$. We have $(0:\bar{r_0})=I/r_0I$. If $n\geq 3$ then $R$ is not $2$-semiregular by Theorem \ref{T:2G-sr}. 
\end{proof}

The following example shows that it is impossible to replace $1$ with $2$ in Proposition \ref{P:fact}

\begin{example}
Let $n\ (\geq 3),\ D,\ I,\ r_0,\dots,r_n$ be as in the proof of the previous remark. We take $R=D/r_0r_1D$ and $A=r_0I/r_0r_1D$. Then $R$ is $2$-semiregular but not $R/A$.
\end{example}

Now we show when a local trivial extension is $2$-semiregular. This result together with the next one allows us to show that the inverse of Corollary \ref{C:2F} does not hold in general.

\begin{proposition}\label{P:triv-2semi}
Let $A$ be a local ring which is not a field, $E$ a non-zero $A$-module and $R=A\propto E$ the trivial ring extension of $A$ by $E$. The following two conditions are equivalent:
\begin{enumerate}
\item $R$ is $2$-semiregular;
\item $A$ and $E$ verify the following properties:
\begin{enumerate} 
\item $A$ is a valuation domain,
\item $E$ is  a divisible torsion uniserial $A$-module such that $(0:e)$ is principal for each $e\in E$.
\end{enumerate}   
\end{enumerate}
\end{proposition}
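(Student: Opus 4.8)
The plan is to prove the equivalence by analyzing the structure theory that Theorem~\ref{T:2G-sr} and Corollary~\ref{C:Cyc-val} impose on a local trivial extension. For the implication $(1)\Rightarrow(2)$, I would first invoke Corollary~\ref{C:edr} (via the fact that a $2$-semiregular ring is semiregular and, being local, a valuation ring by Corollary~\ref{C:Cyc-val}) together with \cite[Corollary 1.4]{Cou15} to get that $R=A\propto E$ is a valuation ring; this forces $A$ to be a valuation ring and $E_P=E$ to be a uniserial $A$-module. Next I would rule out that $A$ has zerodivisors: if $0\ne a\in A$ with $(0:_Aa)\ne 0$, then the element $(a,0)\in R$ would have an annihilator that is not ``small enough'', and since $R$ is $2$-semiregular Theorem~\ref{T:2G-sr}(3)(b) bounds $(0:(a,0))$ by two generators — tracking the first coordinate one gets constraints that, combined with $E\ne 0$ and $A$ not a field, are incompatible unless $A$ is a domain. (The clean way: $R$ is a valuation ring that is semiregular and not a field, so by Corollary~\ref{C:Cyc-val}(4) there is $0\ne(a,x)$ with $(0:(a,x))$ nonzero finitely generated; a short computation in the idealization shows this is only possible when $A$ is a domain and the ``torsion'' lives in $E$.) Divisibility of $E$: since $R$ is a valuation ring, for $0\ne a\in A$ the ideal $aR=aA\propto aE$ must be comparable with $0\propto E$, and as $E\ne 0$ this forces $aE=E$, i.e. $E$ is divisible; torsionness of $E$ follows because $A$ is a domain and $E$ is a torsion $R$-module (every element of $E$ is killed by $(0,0)\ne(0,e')$ whenever $E\ne 0$... more precisely, $0\propto E$ is a torsion ideal). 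Finally, coherence of $R$ forces $(0:(0,e))$ to be finitely generated for each $e\in E$; reading off the first coordinate gives that $(0:_Ae)$ is a finitely generated ideal of the valuation domain $A$, hence principal.

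For the converse $(2)\Rightarrow(1)$, I would verify conditions (a) and (b) of Theorem~\ref{T:2G-sr}(3) directly. Condition (a): $R$ is arithmetical by \cite[Corollary 1.4]{Cou15} (or directly: $R$ is a valuation ring, since $A$ is a valuation domain and $E$ is uniserial divisible, so the ideals of $R$ are totally ordered — I would check this by hand using that every ideal of $R$ has the form $aA\propto N$ or $0\propto N$ for suitable pieces); and $R$ is semiregular because it is coherent and self FP-injective — FP-injectivity should follow since $E$ divisible torsion uniserial over a valuation domain with principal annihilators makes $R$ an FP-injective valuation ring, or one can cite \cite{Cou15}. Condition (b) is the heart of the matter: given $(a,x)\in R$ I must show $(0:(a,x))$ is generated by at most two elements and produce the elements $u,v,s,t$ with the three relations of Theorem~\ref{T:2G-sr}(3)(b). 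Here I split into cases according to whether $a=0$ or $a\ne0$. If $a\ne0$: since $A$ is a domain, $(d,z)(a,x)=0$ forces $d=0$ and then $az=-dx=0$ so $z\in(0:_Ea)$, which is principal since $E$ is uniserial (its submodule $(0:_Ea)$, being the kernel of multiplication by $a$ on a divisible torsion uniserial module, equals $a^{-1}0$ and is cyclic by the principal-annihilator hypothesis applied appropriately). So $(0:(a,x))=0\propto(0:_Ea)$ is principal, and the required relations hold trivially with the right choices of parameters. If $a=0$, so our element is $(0,x)$: then $(d,z)(0,x)=0$ iff $dx=0$ iff $d\in(0:_Ax)$ which is principal, say $=bA$; and $z\in E$ is free. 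Thus $(0:(0,x))=bA\propto E$, generated by $(b,0)$ and $(0,e)$ for any generator-type element — but $E$ need not be finitely generated, so I must be careful: $0\propto E$ is not finitely generated in general. The resolution is that $(0,x)$ with $x\ne0$: one checks $(0:(0,x))=bA\propto E$ where... actually $0\propto E$ is killed by $(0,x)$ only if the product lands in... let me reconsider: $(d,z)(0,x)=(0,dx)$, which is $0$ iff $dx=0$. So $(0:(0,x))=\{(d,z): dx=0\}=(0:_Ax)\propto E$. Since $E$ is not finitely generated this seems to have infinitely many generators — but $A$ being a valuation domain, $(0:_Ax)=bA$, and then $(0:_Ax)\propto E = (b,0)R + (0,E)$... and $(0,E)=0\propto E$ which as an ideal of $R$ is generated over $R$ by... it is $R\cdot(0,e)$ for a single $e$ only if $E$ is cyclic. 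So the honest statement must be that this case forces $x$ to be such that $(0:(0,x))$ IS two-generated, which happens precisely because $0\propto E$, being the annihilator of the maximal ideal structure...

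I expect the main obstacle to be exactly this last point: showing in the $(2)\Rightarrow(1)$ direction that $(0:(0,x))$ is two-generated as an ideal of $R$, given only that $E$ is divisible torsion uniserial with principal annihilators over the valuation domain $A$. The key observation I would use is that for a divisible torsion uniserial module, $0\propto E$ is \emph{not} what appears: rather, for $0\ne x\in E$ with $(0:_Ax)=bA$, divisibility gives $E=a^{-1}x$-generated structure, and crucially $bE = $ a proper submodule, so $(0:_Ax)\propto E$ can be rewritten. Concretely, I would show $(0:_Ax)\propto E$ equals $R(b,0)+R(0,y)$ where $y$ generates the \emph{socle-like} submodule $(0:_Eb')$ — no; the right move is probably: $(0:(0,x)) = bA\propto E$, and since $E$ is divisible, $E = bE$, hence every $(0,e)=(b,0)(0,b^{-1}e)\in R(b,0)$, so $0\propto E\subseteq R(b,0)$, giving $(0:(0,x))=R(b,0)$ — principal! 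That is the trick: divisibility collapses $0\propto E$ into the principal ideal generated by $(b,0)$. Then the Theorem~\ref{T:2G-sr}(3)(b) relations are satisfied with $a:=b$, $b:=0$ (or appropriate degenerate choices), $u=v=s=0$, $t=0$. With this observation in hand, both cases yield that every $(0:(a,x))$ is principal — in particular two-generated with the relations trivially satisfiable — so Theorem~\ref{T:2G-sr} applies and $R$ is $2$-semiregular. I would present the argument organized around these two cases ($a=0$ versus $a\ne0$), flagging the divisibility collapse as the crucial step, and I would double-check the semiregularity/FP-injectivity of $R$ by reducing to \cite{Cou15} or to the characterization that a valuation ring is semiregular iff its maximal ideal is not flat (Corollary~\ref{C:Cyc-val}(5)), which here holds because $P_R = P_A\propto E$ and $E\ne0$ obstructs flatness.
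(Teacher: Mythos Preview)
Your argument for $(2)\Rightarrow(1)$ eventually lands on exactly what the paper does: once you observe that divisibility gives $bE=E$ and hence $R(b,0)=bA\propto bE=bA\propto E=(0:(0,x))$, you have a nonzero element with nonzero principal annihilator, and Corollary~\ref{C:Cyc-val}(4) finishes immediately. The detour through the two-generator condition of Theorem~\ref{T:2G-sr}(3)(b) is unnecessary; the paper simply cites \cite[Theorem 4.16]{AW} to see that $R$ is a valuation ring and then applies Corollary~\ref{C:Cyc-val}.

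The real problems are in $(1)\Rightarrow(2)$. First, your torsion argument is not valid: saying that ``$0\propto E$ is a torsion ideal'' because every $(0,e)$ is killed by some $(0,e')\ne 0$ is true in \emph{any} trivial extension and has nothing to do with whether $E$ is torsion as an $A$-module. You need a genuine obstruction. The paper's argument is this: if some $0\ne e\in E$ has $(0:_Ae)=0$, then $(0:(0,e))=0\propto E$, and the double-annihilator condition of semiregularity (Proposition~\ref{P:semireg}(4)) forces $(0:0\propto E)=R(0,e)=0\propto Ae$, whence $E=Ae\cong A$; but then for any nonzero non-unit $a\in A$ the element $(a,0)$ is a regular non-unit of $R$, which is impossible in a semiregular ring. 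Second, your argument that $A$ is a domain is only a sketch (``a short computation \dots shows this is only possible when $A$ is a domain'') and the computation is not supplied. The paper avoids this entirely by invoking \cite[Theorem 4.16]{AW}, which characterizes when $A\propto E$ is a chain ring and gives directly that $A$ is a valuation domain and $E$ is divisible uniserial; you should either cite that result or actually carry out the verification. Your divisibility argument via comparability of $R(a,0)$ and $0\propto E$ is correct, but it already presupposes that $A$ is a domain (otherwise $R(a,0)$ need not equal $aA\propto aE$ in the way you use it).
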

\begin{proof}
$(1)\Rightarrow (2)$. By  \cite[Theorem 4.16]{AW} $A$ is a valuation domain and $E$ is  a divisible uniserial $D$-module.  Suppose there is   $e\in E$ with $(0:e)=0$. So, in $R$, 
$(0:(0,e))=0\times M$. Then, $M=Re$ since the fact that $R$ is $2$-semiregular implies that $(0:(0:(0,e)))=R(0,e)$, and $M$ is isomorphic to $A$. This shows that for any non unit element $a\neq 0$ of $A$, the element $(a,0)$ of $R$ is a non unit regular element which is absurd. 

Now take any non-zero element $e$ of $E$. $(0:(0,e))= (0:e)\times M$ should be finitely generated. Then, $(0:e)$ is a finitely generated ideal of the valuation domain $A$, as desired.

$(2)\Rightarrow (1)$. Also by  \cite[Theorem 4.16]{AW}, $R$ is a valuation ring. We easily check that $(0:(0,e))$ is a nonzero principal ideal of $R$ for each $0\ne e\in E$. We conclude by Corollary \ref{C:Cyc-val}.
\end{proof}
 
Let $D$ be a valuation domain and $Q$ its quotient field. Recall that a uniserial module $U$ is {\bf standard} if there exist two submodules $J$ and $I$ of $Q$, $I\subset J$, such that $U\cong J/I$. 

\begin{proposition}\label{P:standard}
Let $D$ be a valuation domain, $U$ a divisible torsion uniserial $D$-module. Assume that $(0:u)$ is principal for each $u\in U$. Let $R=D\propto U$ be the trivial ring extension of $D$ by $U$ and let $N=\{(0,u)\mid u\in U\}$. Then the following assertions hold:
\begin{enumerate}
\item $R$ is $2$-semiregular;
\item the following conditions are equivalent:
\begin{enumerate}
\item each $R$-module is $2$-F-periodic;
\item $R/N$ is a $2$-F-periodic $R$-module;
\item $U$ is a standard uniserial $D$-module;
\item $R$ is the homomorphic image of a valuation domain.
\end{enumerate}
\end{enumerate}
\end{proposition}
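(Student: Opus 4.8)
The plan is to establish (1) directly from the earlier machinery, and then to prove the cycle of implications $(a)\Rightarrow(b)\Rightarrow(c)\Rightarrow(d)\Rightarrow(a)$ in part (2). For (1), I would simply invoke Proposition \ref{P:triv-2semi}: the hypotheses state exactly that $D$ is a valuation domain and $U$ is a divisible torsion uniserial $D$-module with $(0:u)$ principal for each $u\in U$, so $R=D\propto U$ is $2$-semiregular. (One should note $D$ is not a field here, since $U$ is torsion and nonzero forces a nonzero nonunit in $D$.)

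For part (2), the implication $(a)\Rightarrow(b)$ is trivial since $R/N$ is a particular $R$-module. The implication $(d)\Rightarrow(a)$ should follow from Proposition \ref{P:2F}: if $R$ is the homomorphic image of a valuation domain $V$, write $R\cong V/dV$; since a valuation domain is semihereditary (indeed B\'ezout), and $d$ is a nonzero nonunit nonzerodivisor (the kernel being a nonzero principal prime-free ideal — here one must check $d$ can be taken to be a nonzerodivisor, which holds in a domain), Proposition \ref{P:2F} gives that every $R$-module is $2$-F-periodic. The implication $(c)\Rightarrow(d)$ is the concrete construction: if $U\cong J/I$ with $I\subset J$ submodules of $Q$, then I would exhibit $R=D\propto U$ as a quotient of a valuation domain. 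The natural candidate is the idealization-type construction inside $Q$ or a pullback; concretely, one takes the subring $V$ of $Q[t]/(t^2)$-flavored objects, or better, uses that $D\propto (J/I)$ is the image of the valuation domain $D + tJ \subseteq Q[t]/(t^2)$ — wait, that ring has nilpotents, so instead one localizes an appropriate valuation domain of rank two (a valuation domain with value group an ordered extension realizing $J$ and $I$) and kills a principal ideal; this is the standard "standard uniserial modules come from quotients of valuation domains" fact, and I would cite or reprove it via the value-group description.

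The implication I expect to be the main obstacle is $(b)\Rightarrow(c)$: deducing from the mere $2$-F-periodicity of the single module $R/N$ that $U$ is standard. Here $N\cong U$ as $R$-modules (with $R$ acting through $D$, since $N\cdot N=0$), and $R/N\cong D$ as rings but as an $R$-module it is $D$ with $U$ acting as zero. A $2$-F-periodic presentation $0\to R/N\to G\to F\to R/N\to 0$ with $G,F$ flat over $R$ restricts, after tensoring with $D=R/N$ and using flatness, to information about $\operatorname{Tor}^R_*(D,D)$ and hence about $U=N$ as a $D$-module; the key point will be that $2$-F-periodicity forces a flat $R$-resolution whose syzygies, read through the trivial-extension structure, force $U$ to embed in a divisible hull compatibly, i.e. to be of the form $J/I$. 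I would make this precise by analyzing flat $R$-modules via the pair description $(M,f)$ with $f^2=0$ (as in Proposition \ref{P:Rflat} and the surrounding discussion, adapted to $D\propto U$), showing a flat $(M,f)$ has $\ker f/\operatorname{Im} f$ controlled, and then tracking the four-term sequence to conclude that the "non-standard" obstruction (the failure of $U$ to be a subquotient of $Q$) would produce a non-flat term. This homological bookkeeping, together with the classification of uniserial modules over valuation domains into standard and non-standard types, is where the real work lies; the remaining implications are either formal or direct appeals to Propositions \ref{P:triv-2semi} and \ref{P:2F}.
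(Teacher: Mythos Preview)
Your outline for (1), $(a)\Rightarrow(b)$, and $(d)\Rightarrow(a)$ matches the paper's. Two small points there: in $(d)\Rightarrow(a)$ you assert the kernel of $V\to R$ is principal without justification; the paper invokes \cite[Corollary II.14]{Cou03} to force $I=dV$, and you should do likewise. For $(c)\Leftrightarrow(d)$ the paper simply cites \cite[Theorem X.6.4]{FuSa01}; your attempted direct construction drifts (the subring of $Q[t]/(t^2)$ you reach for is not a domain), and you are better off citing the standard fact.

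The real gap is $(b)\Rightarrow(c)$. Your plan---analyze flat $R$-modules as pairs $(M,f)$, control $\ker f/\mathrm{Im}\,f$, and argue that a non-standard $U$ forces a non-flat term---is not wrong in spirit but is missing the concrete mechanism, and as written it is not a proof. The paper's argument is much more direct and does not use the pair description at all. It first applies Lemma~\ref{L:2F} to the $2$-F-periodic module $R/N$ to obtain a \emph{short} exact sequence
\[
0\rightarrow R/N\rightarrow G\rightarrow N\rightarrow 0
\]
with $G$ flat over $R$. Then it shows that $G$ is in fact an $R_N$-module: for $s\in P\setminus N$ one has $sN=N$, and FP-injectivity of $G$ (flat equals FP-injective over the semiregular ring $R$) gives $G=sG$; next one checks $NG=0$ by an annihilator argument; finally flatness gives that multiplication by $s\in R\setminus N$ is injective on $G$. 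Since $R_N=Q$, localizing the displayed sequence yields $G\cong Q$, and therefore $N\cong U\cong Q/D$, which is exactly the statement that $U$ is standard. This is the key idea you are missing: the flat middle term is forced to be $Q$ itself, and standardness of $U$ drops out immediately.
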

\begin{proof}
$(1)$. By Proposition \ref{P:triv-2semi}, $R$ is $2$-semiregular.

$(2),\ (c)\Leftrightarrow (d)$ by \cite[Theorem X.6.4]{FuSa01}.

$(2),\ (d)\Rightarrow (a)$. We have $R=D'/I$ where $D'$ is a valuation domain and $I$ a non-zero proper ideal of $D'$. By \cite[Corollary II.14.]{Cou03} necessarily $I=dD'$ where $d$ is a non-zero non-unit element of $D'$. We conclude by Proposition \ref{P:2F}.

$(2),\ (a)\Rightarrow (b)$ is obvious.

$(2),\ (b)\Rightarrow (c)$. Clearly $N$ is the nilradical of $R$, it is a prime ideal, $R_N=Q$ the quotient field of $D$ and $R/N=D$. By applying Lemma \ref{L:2F} to $R/N$ there exists an exact sequence 
\begin{equation}\label{eq:seq}
0\rightarrow R/N\rightarrow G\rightarrow N\rightarrow 0,
\end{equation}
 where $G$ is a flat $R$-module. Let $P$ be the maximal ideal of $R$ and $C$ the image of $R/N$ in $G$. First we show that $G=sG$ for any $s\in P\setminus N$. Since $N\subset sR$ and $N$ is prime we have $sN=N$. So, if $g\in G$ there exists $h\in G$ such that $(g-sh)\in C$. But $(0:s)\subseteq N\subseteq (0:g-sh)$. The FP-injectivity of $G$ implies that $(g-sh)=sg'$ for some $g'\in G$. Now, we prove that $G$ is annihilated by $N$. Let $g\in G$ and $n\in N$. From $N^2=0$ we get that $ng\in C$. If $ng\ne 0$ then $(0:g)=n(0:ng)=nN=0$. This contradicts that $g\in sG$ for each $s\in P\setminus N$. Finally consider $s\in P\setminus N$ and $g\in G$ such that $sg=0$. The flatness of $G$ implies that $g\in (0:s)G\subseteq NG=0$. We conclude that the multiplication by each $s\in R\setminus N$ in $G$ is bijective. Hence $G$ is an $R_N$-module. By tensoring the exact sequence (\ref{eq:seq}) we get that $R_N$ is isomorphic to $G$. Whence $U\cong Q/D$.
\end{proof}

\begin{example} In \cite{Oso91}   Osofsky gives an example of a valuation domain $D$ for which there exists a non-standard uniserial (divisible) $D$-module $U$ with principal annihilators.
So, $R=D\propto U$ is a $2$-semiregular ring, but, by Proposition \ref{P:standard} $D$ is an $R$-module which is not $2$-F-periodic.
\end{example}

\end{document}